\theoremstyle{plain}
\newtheorem{Thm}{Theorem}
\newtheorem{Lem}[Thm]{Lemma}
\newtheorem{Con}[Thm]{Conjecture}
\newtheorem{Cor}[Thm]{Corollary}
\newtheorem{Prop}[Thm]{Proposition}
\newtheorem{Def}[Thm]{Definition}
\newtheorem{Rem}[Thm]{Remark}
\newtheorem{Ex}[Thm]{Example}
\newcommand*\circled[1]{\tikz[baseline=(char.base)]{
            \node[shape=circle,draw,inner sep=2pt] (char) {#1};}}
\newcommand\sbullet[1][.5]{\mathbin{\vcenter{\hbox{\scalebox{#1}{$\bullet$}}}}}
\newcommand{\red}[1]{\textcolor{red}{\bf #1}}
\newcommand{\blue}[1]{\textcolor{blue}{\bf #1}}
\newcommand{\hgt}{\operatorname{ht}}
\newcommand{\dpt}{\operatorname{dpt}}
\newcommand{\rbd}{\operatorname{RBD}}
\newcommand{\hj}{\operatorname{HJ}}
\newcommand{\OB}{\mathcal{OB}}
\def\a{\alpha}
\def\b{\beta}
\def\d{\delta}
\def\D{\Delta}
\def\g{\gamma}
\begin{document}

\title{Rational blowdown graphs for  \linebreak symplectic   fillings of lens spaces}

\author{Mohan Bhupal and Burak Ozbagci}


\address{Department of Mathematics,  METU, Ankara, Turkey, \newline bhupal@metu.edu.tr}
\address{Department of Mathematics, Ko\c{c} University, Istanbul, Turkey
\newline bozbagci@ku.edu.tr}


\begin{abstract}

In a previous work, we proved that each minimal symplectic  filling of any oriented lens space, viewed as the singularity link of some cyclic quotient singularity and equipped with its canonical contact structure,  can be obtained from the minimal resolution of the singularity by a sequence of  symplectic rational blowdowns along linear plumbing graphs.
Here we give a dramatically simpler visual presentation of our rational blowdown algorithm in terms of the triangulations of a convex polygon. As a consequence, we are able to organize the symplectic deformation equivalence classes of all minimal symplectic fillings of any given lens space equipped with its canonical contact structure, as a graded, directed, rooted, and connected graph, where the root is the minimal resolution of the corresponding cyclic quotient singularity and  each directed edge is a symplectic rational blowdown along an explicit linear plumbing graph. Moreover, we provide an upper bound for the {\em rational blowdown depth} of each minimal symplectic filling.
\end{abstract}

\maketitle

\section{Introduction}

For each  pair of coprime integers $(p,q)$ with $p > q \geq 1$,  the lens space $L(p,q)$ is
orientation preserving diffeomorphic to the link of some cyclic quotient singularity.   Let $\xi_{can}$  denote the canonical contact structure on $L(p,q)$, viewed as the singularity link. Lisca  \cite{l} classified the minimal symplectic fillings  of $(L(p,q), \xi_{can})$, up to  diffeomorphism.  These diffeomorphism classes are parametrized by a set $\mathcal{Z}_{k}(\textstyle{\frac{p}{p-q}})$  (see Section~\ref{sec: basic} for its definition)
of certain $k$-tuples of nonnegative integers, where $k$ is the length of the Hirzebruch-Jung continued fraction expansion of $\frac{p}{p-q}$. Moreover, each diffeomorphism class admits a unique symplectic structure, up to symplectic deformation equivalence. \cite{bono}.

Let $\mathcal{Z}_{k}$ denote the set of {\em admissible} $k$-tuples of  nonnegative integers which {\em represent zero}   (see Section~\ref{sec: basic} for its definition). As observed by Stevens  \cite{s}, the set $\mathcal{Z}_{k}$ can be identified with the set $\mathcal{T} (\mathcal{P}_{k+1})$ of all triangulations of a convex polygon $\mathcal{P}_{k+1}$ with $k+1$ vertices. By definition, $\mathcal{Z}_{k}(\textstyle{\frac{p}{p-q}})$  is a certain subset of $\mathcal{Z}_{k}$. It follows that the set of symplectic deformation classes of minimal symplectic fillings of $(L(p,q), \xi_{can})$  can be bijectively identified with a certain subset of $\mathcal{T}(\mathcal{P}_{k+1})$, which we denote by $\mathcal{T}^{p,q}(\mathcal{P}_{k+1})$ in this paper.

In \cite{bo}, we proved that, up to symplectic deformation equivalence, each minimal symplectic  filling of $(L(p,q), \xi_{can})$ can be obtained from the canonical symplectic filling, which is the minimal resolution of the corresponding cyclic quotient singularity, by a sequence of {\em symplectic}  rational blowdowns along {\em linear} plumbing graphs. To prove this result, we first constructed an explicit planar Lefschetz fibration on each minimal symplectic filling. Then we provided an algorithm so that, for each minimal symplectic filling, one can start with the monodromy factorization for the Lefschetz fibration on the minimal resolution and, by applying a sequence of lantern substitutions, obtain the monodromy factorization for the Lefschetz fibration on the minimal symplectic filling at hand. According to our algorithm, one has to allow achiral Lefschetz fibrations in the mid-sequence but the end of the sequence is always a (positive) Lefschetz fibration. Finally, we showed that for each minimal symplectic filling, the concatenation of these lantern substitutions is a sequence of symplectic  rational blowdowns along linear  plumbing graphs.

In order to show that our rational blowdowns are in fact symplectic (not just smooth) surgeries, we relied on the fact that each such monodromy substitution in the monodromy factorization of a Lefschetz fibration is a symplectic surgery, due to the work of Gay and Mark \cite{gm}.

Here we present our algorithm in terms of the triangulations of a convex polygon. The crucial observation is that each lantern substitution in the monodromy factorization of the corresponding planar Lefschetz fibration is realized by a diagonal flip move in the triangulations (see Section~\ref{sec: flip} for its definition), and therefore each rational blowdown which is obtained by a concatenation of lantern substitutions is realized by a sequence of diagonal flip moves. Moreover, with this new point of view, we are able to organize the symplectic deformation  equivalence classes of all minimal symplectic  fillings of $(L(p,q), \xi_{can})$ as a graded, directed, rooted, connected graph $\mathcal{G}^{p,q}_k$, where the root (meaning, the only vertex with no incoming edges) is the minimal resolution of the corresponding cyclic quotient singularity and  each directed edge is a symplectic rational blowdown along an explicit  linear plumbing graph. The grading is provided by the second Betti number of the minimal symplectic filling, where the minimal resolution has the highest grading.

\begin{Thm}\label{thm: main}  Let $(p,q)$ be a pair of coprime integers $(p,q)$ with $p > q \geq 1$, and let $k$ be the length of the Hirzebruch-Jung continued fraction expansion of $\frac{p}{p-q}$.  Then there is a graded, directed, rooted, connected graph $\mathcal{G}^{p,q}_k$, which we call the rational blowdown graph,  such that
\begin{enumerate}
\item{there is a bijection between the set of vertices of $\mathcal{G}^{p,q}_k$ and the set $\mathcal{T}^{p,q}(\mathcal{P}_{k+1})$ of certain triangulations of the convex polygon $\mathcal{P}_{k+1}$ with $k+1$ vertices, which parameterizes the symplectic deformation equivalence classes of minimal symplectic  fillings of $(L(p,q), \xi_{can})$, }
\item{the root vertex of $\mathcal{G}^{p,q}_k$ corresponds to  the initial triangulation, representing the minimal resolution,}
\item{each directed edge in $\mathcal{G}^{p,q}_k$ corresponds to a sequence of diagonal flips in the triangulations, which represents a symplectic rational blowdown along a linear plumbing graph, and}
\item{each vertex of $\mathcal{G}^{p,q}_k$ is graded by the second Betti number of the minimal symplectic filling it represents and each directed edge drops the grading by the number of diagonal flips used to construct that edge in item (3).}
\end{enumerate}
  \end{Thm}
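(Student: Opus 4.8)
The plan is to build the graph $\mathcal{G}^{p,q}_k$ directly on the combinatorial model and then transport all geometric content through the bijections already established in \cite{bo} and \cite{s}. First I would recall, from Section~\ref{sec: basic}, the identification of $\mathcal{Z}_k$ with the set $\mathcal{T}(\mathcal{P}_{k+1})$ of triangulations of the convex $(k+1)$-gon, and then single out the subset $\mathcal{Z}_k(\tfrac{p}{p-q}) \subset \mathcal{Z}_k$, whose image under Stevens' identification is the set $\mathcal{T}^{p,q}(\mathcal{P}_{k+1})$. By Lisca's classification \cite{l} together with \cite{bono}, this set is in bijection with the symplectic deformation equivalence classes of minimal symplectic fillings of $(L(p,q),\xi_{can})$; taking these as the vertex set gives item (1) for free. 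For item (2), I would check that the initial triangulation — the one corresponding to the $k$-tuple $(1,1,\dots,1)$ or, depending on conventions, the ``fan'' triangulation from one fixed vertex — is exactly the element of $\mathcal{Z}_k(\tfrac{p}{p-q})$ that corresponds to the minimal resolution of the cyclic quotient singularity, using the dictionary between triangulations and resolution/deformation data from \cite{s}.

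The main work is in defining the directed edges so that they simultaneously realize (i) sequences of diagonal flips, (ii) symplectic rational blowdowns along linear plumbing graphs, and (iii) a drop in the second Betti number equal to the number of flips. The plan is to import the algorithm of \cite{bo}: for each minimal symplectic filling $W$ other than the minimal resolution, that algorithm produces a specific predecessor filling $W'$ together with a prescribed sequence of lantern substitutions turning the monodromy factorization of $W'$ into that of $W$, and shows that the concatenation of these substitutions is a symplectic rational blowdown of $W'$ along an explicit linear plumbing graph. I would then invoke the crucial observation stated in the introduction — that each lantern substitution corresponds to a diagonal flip in the triangulation — to conclude that the same algorithm, read combinatorially, sends the triangulation of $W'$ to that of $W$ by the corresponding sequence of diagonal flips. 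Declaring $(W' \to W)$ an edge of $\mathcal{G}^{p,q}_k$ for every $W$ gives the edge set; connectedness and rootedness follow because iterating the predecessor map terminates at the minimal resolution (the vertex with strictly largest second Betti number), and no edges point into it. Item (3) is then essentially a restatement of the \cite{bo} result combined with the flip/lantern correspondence.

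For item (4), I would use the effect of a single lantern substitution (equivalently rational blowdown of a single $(-4)$-sphere, or more precisely the local model of the lantern relation) on the topology: each lantern substitution decreases the number of Dehn twists in the factorization by one and hence decreases $b_2$ of the total space of the Lefschetz fibration by exactly one, since $b_2$ of a planar Lefschetz fibration equals the number of vanishing cycles minus (a constant depending only on the fiber). Therefore a directed edge built from $m$ diagonal flips drops the grading by exactly $m$, and the minimal resolution — whose triangulation is the source of every maximal directed path — sits at the top. The grading is well-defined on vertices because $b_2$ is a symplectic deformation invariant. The last routine point is to verify that the grading is consistent, i.e., that any two directed paths between the same pair of vertices use the same total number of flips; this follows automatically since the grading is the pre-assigned integer $b_2$, so any edge's flip-count is forced to be the difference of the endpoint gradings.

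The step I expect to be the main obstacle is the precise matching in item (3): namely, verifying that the edge-defining sequence of diagonal flips produced by translating the \cite{bo} algorithm really does correspond, triangulation-by-triangulation, to a \emph{linear} plumbing graph rational blowdown, and that intermediate triangulations in the sequence — which correspond to the achiral Lefschetz fibrations appearing mid-sequence in \cite{bo} — are handled correctly even though they need not lie in $\mathcal{T}^{p,q}(\mathcal{P}_{k+1})$ themselves. Care is needed to argue that only the endpoints of each flip sequence are required to be honest symplectic fillings (hence in the parametrizing set), while the intermediate flips are purely combinatorial bookkeeping that nonetheless assembles into a genuine symplectic surgery by Gay--Mark \cite{gm}. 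Making this rigorous amounts to checking that the dictionary ``lantern substitution $\leftrightarrow$ diagonal flip'' from Section~\ref{sec: flip} is compatible with concatenation and with the linear-plumbing description of the blowdown locus, which I would do by a direct inspection of the local models.
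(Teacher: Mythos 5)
Your overall plan is sound, but it takes a genuinely different route from the paper, and the difference matters for what the proof actually delivers. You obtain the edges of $\mathcal{G}^{p,q}_k$ by citing the algorithm of \cite{bo} wholesale: each filling gets a predecessor chain back to the minimal resolution, each step of which \cite{bo} already certifies to be a symplectic rational blowdown along a linear plumbing graph, and the flip--lantern dictionary is used only as combinatorial bookkeeping. The paper does the opposite: it builds $\mathcal{G}^{p,q}_k$ from the auxiliary graph $\mathcal{G}_k$ by declaring an edge from $\D(\textbf{n})$ to $\D(\textbf{n}')$ exactly when the path between them in $\mathcal{G}_k$ is \emph{unique}, proves that uniqueness forces the flipped distinguished diagonals to form a \emph{contiguous} sequence (the swap argument, as in Lemma~\ref{lem: cont}), peels away the triangles not meeting these diagonals to reduce to an initial triangulation of a smaller polygon, and then invokes only the rational-homology-ball fact of Remark~\ref{rem: rhb} together with Lemma~\ref{lem: plumbing} and the embedding of the smaller planar Lefschetz fibration (monodromy as a subword) to conclude that each edge is a single rational blowdown along an explicitly computable linear plumbing. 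What the paper's route buys is a canonical, richer graph in which \emph{every} pair of fillings joined by a unique path is connected by an edge, plus the explicit weight algorithm of Proposition~\ref{prop: lpg}; what your route buys is brevity, at the price of producing only the sparser graph of \cite{bo}-chains and of importing from \cite{bo} exactly the geometric statement whose combinatorial re-derivation is the point of this paper. Since the theorem is an existence statement, your graph does satisfy items (1)--(4) as literally stated, provided the citation is taken at the strength you assume (in particular that the breakpoints of the \cite{bo} chains are themselves Lisca fillings in $\mathcal{Z}_k(\textstyle{\frac{p}{p-q}})$, hence vertices).

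Two caveats you should repair. First, the step you defer (``direct inspection of the local models'') is precisely where the paper's proof does its work; if you do not want to lean on \cite{bo} for the full linear-plumbing statement, you must supply the contiguity/peeling argument and Lemma~\ref{lem: plumbing}, since the dictionary ``one lantern $=$ one flip'' by itself does not tell you that a given concatenation of flips assembles into a \emph{single} rational blowdown along a \emph{linear} plumbing graph (indeed the paper shows in Remark~\ref{rem: conca} that some concatenations do not). Second, your item (4) argument is imprecise: $b_2$ of the total space of a planar Lefschetz fibration is the number of vanishing cycles minus a fiber constant only when $b_1$ of the total space vanishes, and within an edge the algorithm inserts and removes cancelling pairs of Dehn twists, so the count must be made endpoint-to-endpoint. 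Both issues are avoidable by arguing as the paper does, either via the Milnor number formula $\mu(W_{p,q}(\textbf{n}))=r+2(k-1)-|\textbf{n}|$ together with the fact that each flip increases $|\textbf{n}|$ by one (Lemma~\ref{lem: hgt}), or via the Endo--Nagami signature argument of Remark~\ref{rem: alt}. Finally, a small slip: the initial triangulation corresponds to $\textbf{u}_k=(1,2,\ldots,2,1)$ (the fan from the distinguished vertex), not to $(1,1,\ldots,1)$.
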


\begin{Rem}{\em There is an elementary algorithm to obtain the linear plumbing graph for each directed edge of $\mathcal{G}^{p,q}_k$ described in item (3) of Theorem~\ref{thm: main}, based on the sequence of diagonal flips used to construct that edge. We formulated this algorithm as Proposition~\ref{prop: lpg}. } \end{Rem}

The graph $\mathcal{G}^{p,q}_k$ of Theorem~\ref{thm: main} is obtained from another graded, directed, rooted, connected graph, which we denote by $\mathcal{G}_k$. The set of vertices of $\mathcal{G}_k$ corresponds bijectively  to the set $\mathcal{T}(\mathcal{P}_{k+1})$ of all the triangulations of the convex polygon $\mathcal{P}_{k+1}$, and each {\em directed} edge connects two triangulations which differ only by a single diagonal flip along a distinguished diagonal.  We think of the vertices of $\mathcal{G}_k$ as light bulbs and the edges as the wires connecting the light bulbs. Once a pair $(p,q)$ is fixed as in Theorem~\ref{thm: main}, the graph $\mathcal{G}^{p,q}_k$  is essentially obtained from the graph $\mathcal{G}_k$ by turning on some of the light bulbs in $\mathcal{G}_k$ determined by $(p,q)$, and inserting new wires, if necessary,  to bypass the light-bulbs which are not turned on.

It is possible that the symplectic deformation type  of some minimal symplectic filling of $(L(p,q), \xi_{can})$ can be obtained from the minimal resolution by applying {\em distinct} sequences of symplectic rational blowdowns. This phenomenon is certainly reflected in our graph $\mathcal{G}^{p,q}_k$, as different possible paths (i.e., concatenations of the directed edges) from a vertex (in particular the root vertex) to any other are clearly visible in   $\mathcal{G}^{p,q}_k$.

 \begin{Def} A minimal symplectic filling of $(L(p,q), \xi_{can})$ is said to have  rational blowdown depth  $r$ if the minimal number of successive symplectic rational blowdowns along linear plumbing graphs needed to obtain the filling from the minimal resolution is equal to $r$, where the depth of the minimal resolution is set to be zero.
  \end{Def}

 \begin{Def} For $k \geq 3$, the depth of  $\textbf{n} = (n_1, \ldots, n_k) \in \mathcal{Z}_{k}$, denoted by $\dpt (\textbf{n})$, is the number of $1$'s in the interior of $\textbf{n}$, i.e., $\dpt (\textbf{n})$ is the cardinality of the set $\{ i \; |\;  1 < i < k \; \mbox{and} \; n_i=1\}$.
  \end{Def}

\begin{Prop} \label{prop: depth} Let $W_{p,q}(\textbf{n})$ denote the minimal symplectic filling of $(L(p,q), \xi_{can})$ that corresponds to $\textbf{n}= (n_1, \ldots, n_k)$ in the parameterizing set $\mathcal{Z}_{k}(\textstyle{\frac{p}{p-q}})$. Then  the rational blowdown depth  of  $W_{p,q}(\textbf{n})$  is bounded above by  $\dpt (\textbf{n})$. In particular, if $\dpt (\textbf{n})=1$, then $W_{p,q}(\textbf{n})$  is obtained from the minimal resolution by a single symplectic rational blowdown.
\end{Prop}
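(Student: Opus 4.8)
The plan is to prove the bound by a counting argument in the graph $\mathcal{G}^{p,q}_k$ of Theorem~\ref{thm: main}. By definition, the rational blowdown depth of $W_{p,q}(\textbf{n})$ equals the length of a shortest directed path in $\mathcal{G}^{p,q}_k$ from the root (the minimal resolution) to the vertex corresponding to $\textbf{n}$, and such a path exists since $\mathcal{G}^{p,q}_k$ is connected and rooted. The root is the initial triangulation, whose tuple is $(1,2,\ldots,2,1)$ and hence has $\dpt=0$, since all of its interior entries equal $2$. So the proposition reduces to the following.

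\textbf{Claim.} Along every directed edge of $\mathcal{G}^{p,q}_k$, the invariant $\dpt$ of the associated $\mathcal{Z}_k$-tuple increases by exactly the number of diagonal flips out of which that edge is built, in particular by at least one.

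I would deduce the proposition from the Claim as follows. Summing the increments of $\dpt$ along the $r$ edges of a directed path of length $r$ from the root to $\textbf{n}$ shows that this path comprises, in total, $\dpt(\textbf{n})-\dpt(1,2,\ldots,2,1)=\dpt(\textbf{n})$ diagonal flips; since each edge contributes at least one flip, $r\le \dpt(\textbf{n})$. Applying this to a shortest path shows that the rational blowdown depth of $W_{p,q}(\textbf{n})$ is at most $\dpt(\textbf{n})$. For the final assertion, if $\dpt(\textbf{n})=1$ then $\textbf{n}$ is not the root, so a shortest path is nonempty and therefore a single edge, which by Theorem~\ref{thm: main}(3) is precisely one symplectic rational blowdown along a linear plumbing graph.

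To prove the Claim I would first reduce to single flips: a directed edge of $\mathcal{G}^{p,q}_k$ is, by construction, a concatenation of directed edges of $\mathcal{G}_k$ --- the single diagonal flips along distinguished diagonals that are inserted to bypass the vertices of $\mathcal{G}_k$ that are not turned on --- so it suffices to show that a single flip along a distinguished diagonal raises $\dpt$ by exactly one. The input here is the elementary effect of a diagonal flip on the tuple of a triangulation: if the entry at position $i$ counts the triangles incident to the $i$-th vertex of $\mathcal{P}_{k+1}$, then flipping a diagonal $v_av_b$ into the opposite diagonal $v_cv_d$ of the ambient quadrilateral decreases the entries at $a$ and $b$ by one, increases the entries at $c$ and $d$ by one, and fixes the rest. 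Consequently a single flip can raise $\dpt$ by at most one --- via a decreasing interior entry dropping from $2$ to $1$ --- and can lower $\dpt$ whenever an increasing entry rises from $1$ to $2$ at an interior position. The definition of a distinguished diagonal in Section~\ref{sec: flip} is arranged precisely so that the first phenomenon occurs and the second does not; granting this, $\dpt$ increases by exactly one along each edge of $\mathcal{G}_k$, and the parenthetical refinement in the Claim follows because $\dpt$ is an invariant of the vertex.

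I expect the crux to be exactly this last point --- verifying that the flip performed at each step of the algorithm of Theorem~\ref{thm: main} does create a new interior $1$ without destroying an existing one. This is a genuine feature of how the algorithm chooses its flips rather than a property of an arbitrary diagonal flip (a generic flip can lower $\dpt$), so I would expect to follow the algorithm and the structure of the triangulations it produces rather than argue softly. An alternative route that sidesteps the Claim: if one has in hand the identity $b_2\bigl(W_{p,q}(\textbf{n})\bigr)=k-\dpt(\textbf{n})$, then the bound is immediate from Theorem~\ref{thm: main}(4), since every directed edge drops the grading $b_2$ by at least one, so a directed path from the root (grading $k$) to $\textbf{n}$ has at most $k-b_2(W_{p,q}(\textbf{n}))=\dpt(\textbf{n})$ edges; but proving that identity --- say by reading off $b_2$ from the explicit planar Lefschetz fibration of \cite{bo} --- is of comparable difficulty.
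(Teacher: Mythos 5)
Your central Claim is false, and the paper's own examples refute it. Along an edge of $\mathcal{G}^{p,q}_k$ the depth does \emph{not} increase by the number of flips, nor even necessarily by one per flip: in $\mathcal{G}^{81,47}_5$ the edge $\rbd_2$ is a single flip along a distinguished diagonal taking $(2,1,3,2,1)$ to $(3,1,2,3,1)$, and both tuples have depth one (the flip lowers the third entry from $3$ to $2$, creating no new interior $1$); the edge $\rbd_4$ consists of three flips from $(1,2,2,2,1)$ (depth zero) to $(3,2,1,3,2)$ (depth one). The quantity that changes by exactly one per flip is the \emph{height} (Lemma~\ref{lem: hgt}), equivalently the grading $b_2$ of Theorem~\ref{thm: main}(4) — not the depth. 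What is true about depth is Lemma~\ref{lem: dep}: along a single edge of $\mathcal{G}^{p,q}_k$ (a unique, hence contiguous, path in $\mathcal{G}_k$) the depth increases by \emph{at most} one, and this yields only the reverse inequality, namely that any path from the root to $\D(\textbf{n})$ has length at least $\dpt(\textbf{n})$ (Remark~\ref{rem: path}). Your fallback identity $b_2(W_{p,q}(\textbf{n}))=k-\dpt(\textbf{n})$ is also false: the paper's formula gives $\mu(W_{p,q}(\textbf{n}))=r-\hgt(\textbf{n})$ with $r$ the length of $\hj(\frac{p}{q})$, so the grading argument only bounds path lengths by $\hgt(\textbf{n})$, which is in general strictly larger than $\dpt(\textbf{n})$ (e.g.\ $\hgt((3,2,1,3,2))=3$ while $\dpt=1$). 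A smaller point: the rational blowdown depth is not ``by definition'' the shortest path length in $\mathcal{G}^{p,q}_k$ — it is defined via arbitrary symplectic rational blowdowns along linear plumbing graphs, and only the inequality depth $\leq$ shortest path length comes for free (equality is essentially Conjecture~\ref{con: depth}); that direction does suffice for an upper bound, so this is minor compared with the failure of the Claim.

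What is actually needed — and what the paper does — is to \emph{exhibit} a path of length $\dpt(\textbf{n})$, each of whose edges may consist of many flips, rather than to show that every edge raises the depth. The paper argues by induction on $l=\dpt(\textbf{n})$: choose one interior $1$ of $\textbf{n}$ and blow down repeatedly at that same position until the depth first drops to $l-1$; by Lemma~\ref{lem: once} this corresponds to a contiguous sequence of distinguished-diagonal flips from $\D(\textbf{u}_k)$ to a vertex $\D(\textbf{n}_t)$, and Lemma~\ref{lem: same} — the genuinely nontrivial input your proposal has no substitute for — shows that $\textbf{n}_t$ still lies in $\mathcal{Z}_k(\textstyle{\frac{p}{p-q}})$, so $W_{p,q}(\textbf{n}_t)$ is again a minimal symplectic filling of the \emph{same} lens space and, by the contiguity argument in the proof of Theorem~\ref{thm: main}, is obtained from $W_{p,q}(\textbf{u}_k)$ by a single rational blowdown. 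Peeling off the corresponding triangles reduces $\D(\textbf{n})$ relative to $\D(\textbf{n}_t)$ to a smaller polygon whose tuple has depth $l-1$, and the induction hypothesis supplies the remaining at most $l-1$ blowdowns. Your proposal would need to be restructured along these lines; as written, the counting argument cannot be repaired because its premise fails on essentially every example in the paper.
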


\begin{Con} \label{con: depth} The rational blowdown depth  of  $W_{p,q}(\textbf{n})$  is in fact equal to  $\dpt (\textbf{n})$. \end{Con}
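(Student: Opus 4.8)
The plan is to prove two matching inequalities. The upper bound $\rbd(W_{p,q}(\mathbf{n})) \le \dpt(\mathbf{n})$ is already Proposition~\ref{prop: depth}, so the entire content of the conjecture is the reverse inequality $\rbd(W_{p,q}(\mathbf{n})) \ge \dpt(\mathbf{n})$. I would establish this by producing a monovariant: a function $\mu$ on the turned-on vertices of $\mathcal{G}^{p,q}_k$ (equivalently on $\mathcal{T}^{p,q}(\mathcal{P}_{k+1})$) with $\mu(\mathrm{root}) = 0$, with $\mu(W_{p,q}(\mathbf{n})) = \dpt(\mathbf{n})$, and such that $\mu$ increases by at most $1$ along every directed edge. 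Since each directed edge is a single symplectic rational blowdown along a connected linear plumbing graph and $\rbd$ is the minimal number of edges in a directed path from the root to $W_{p,q}(\mathbf{n})$, such a $\mu$ gives $\rbd(W_{p,q}(\mathbf{n})) \ge \mu(W_{p,q}(\mathbf{n})) - \mu(\mathrm{root}) = \dpt(\mathbf{n})$; together with Proposition~\ref{prop: depth} this yields equality. Note that the $b_2$-grading of Theorem~\ref{thm: main}(4) does not suffice for this, since it only records the total number of diagonal flips along a path, not the number of edges, and a single edge may consist of many flips; a genuinely new monovariant keyed to rational blowdowns rather than flips is required.

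The natural candidate is $\mu = \dpt$ itself. Its value at the root is $0$ because the minimal resolution has no interior ear, so its tuple has all interior entries $\ge 2$, and $\mu(W_{p,q}(\mathbf{n})) = \dpt(\mathbf{n})$ by definition. The conceptual reason this is the correct invariant is the correspondence between minimal symplectic fillings and $\mathbb{P}$-resolutions of the cyclic quotient singularity: each interior $1$ of $\mathbf{n}$ records one singular point of class $T$ (a Wahl singularity) of the associated $\mathbb{P}$-resolution, hence one embedded rational homology ball piece of $W_{p,q}(\mathbf{n})$. Thus $\dpt(\mathbf{n})$ counts the rational homology ball pieces, and the desired inequality ``$\mu$ increases by at most $1$ per edge'' becomes the geometrically plausible assertion that a single symplectic rational blowdown along a connected linear plumbing graph glues in exactly one connected rational homology ball and so creates at most one new such piece.

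To make this precise I would work directly in the triangulation model, using the explicit linear plumbing graph attached to each edge in Proposition~\ref{prop: lpg}. The task reduces to the combinatorial lemma: if $T \to T'$ is a single edge, realized by the prescribed sequence of diagonal flips for a connected linear plumbing graph, then the number of interior ears of $T'$ exceeds that of $T$ by at most $1$. One cannot argue flip-by-flip, since the intermediate triangulations along the edge (turned on or off) may have many interior ears created and destroyed; instead I would exploit the connectedness of the plumbing graph to show that the net effect of the entire flip sequence is to fold a single fan-shaped subconfiguration of $T$ into one new interior ear, while leaving the ear structure elsewhere untouched.

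I expect the main obstacle to be exactly this net-change estimate. The delicate point is that a connected-chain blowdown may interact with pre-existing interior ears of $T$: a priori it could merge two ears into one, split one ear into two, or relocate an ear while creating a second, and ruling out a net increase of $2$ or more demands a precise description of where the flip sequence of Proposition~\ref{prop: lpg} sits relative to the existing diagonals. I would control this by a local analysis of the polygon region swept out by the edge's flips, and, should the bookkeeping become unwieldy, complement it with the topological count of disjoint rational homology balls in $W_{p,q}(\mathbf{n})$ supplied by the $\mathbb{P}$-resolution, which delivers the same bound on $\mu$ independently of the combinatorics.
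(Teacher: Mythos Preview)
This statement is a \emph{conjecture} in the paper, not a theorem; there is no proof to compare against. The paper only provides supporting evidence in Propositions~\ref{prop: depth2}, \ref{prop: depth2a}, and \ref{prop: depth3}, each treating a single specific filling by ad hoc lattice and adjunction arguments.

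Your proposal has a genuine gap, and it is exactly the gap that makes this a conjecture rather than a theorem. You write that ``$\rbd$ is the minimal number of edges in a directed path from the root to $W_{p,q}(\mathbf{n})$'', but that is not the definition. The rational blowdown depth is the minimal number of successive symplectic rational blowdowns along \emph{any} linear plumbing graphs, not just the ones recorded as edges of $\mathcal{G}^{p,q}_k$. The edges of $\mathcal{G}^{p,q}_k$ are a particular family of rational blowdowns arising from contiguous diagonal-flip sequences; nothing in the construction says these are all symplectic rational blowdowns between fillings. In fact the combinatorial statement you are aiming for---that the shortest directed path in $\mathcal{G}^{p,q}_k$ from the root to $\D(\mathbf{n})$ has length $\dpt(\mathbf{n})$---is already established in the paper: Lemma~\ref{lem: dep} gives exactly your monovariant inequality along edges, and Remark~\ref{rem: path} draws the conclusion. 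So your argument, once made precise, reproves Remark~\ref{rem: path}, not Conjecture~\ref{con: depth}.

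What remains, and what is genuinely open, is to rule out symplectic rational blowdowns that are \emph{not} edges of $\mathcal{G}^{p,q}_k$. The paper's verifications in Propositions~\ref{prop: depth2}, \ref{prop: depth2a}, \ref{prop: depth3} do precisely this in special cases: they enumerate all possible embeddings of the relevant linear plumbing graphs into the canonical filling using the intersection lattice, the parity of $H_2$, and the adjunction equality, and then check that none of the resulting blowdowns lands on the target filling. Your $\mathbb{P}$-resolution heuristic (``one rational blowdown creates at most one Wahl piece'') is pointing at the right geometry, but turning it into a proof would require controlling arbitrary symplectic embeddings of Wahl chains, not just the combinatorially visible ones, and that is the missing idea.
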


In Propositions~\ref{prop: depth2} and ~\ref{prop: depth2a}, we give examples of minimal symplectic fillings  of rational blowdown depth $2$, and in Proposition~\ref{prop: depth3}, we give an example of  a  minimal symplectic filling  of rational blowdown depth $3$, all satisfying Conjecture~\ref{con: depth}. 

Notice that each Milnor fibre of any given cyclic quotient singularity is a Stein (and hence minimal symplectic)  filling of $(L(p,q), \xi_{can})$. By the work of Christophersen \cite{c} and Stevens \cite{s}, the set $\mathcal{Z}_{k}(\textstyle{\frac{p}{p-q}})$ also parameterizes these  Milnor fibres, up to diffeomorphism. As a matter of  fact,  Lisca \cite{l} proved that each diffeomorphism class of minimal symplectic fillings of $(L(p,q), \xi_{can})$ contains a  Stein representative and  proposed an {\em explicit}  one-to-one correspondence between the set of such Stein representatives and the set of Milnor fibres of the corresponding cyclic quotient singularity,   which was subsequently verified  by  N\'{e}methi and Popescu-Pampu \cite{npp}.

On the other hand, it is well-known that, for any  $(L(p,q), \xi_{can})$, the Milnor fibre of the Artin smoothing component of the corresponding cyclic quotient singularity gives a minimal symplectic filling which is symplectic deformation equivalent to the one obtained by deforming the symplectic structure on the minimal resolution (see \cite{bd}) of the singularity. The result below is an immediate consequence of the aforementioned one-to-one correspondence  of N\'{e}methi and Popescu-Pampu \cite{npp}.

\begin{Cor} Analogues of  Theorem~\ref{thm: main} and Proposition~\ref{prop: depth} hold when minimal symplectic fillings are replaced by Milnor fibres  of the corresponding cyclic quotient singularity and the minimal resolution is replaced with  the Milnor fibre of the Artin smoothing component.  \end{Cor}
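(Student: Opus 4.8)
The plan is to leverage the explicit one-to-one correspondence of N\'emethi and Popescu-Pampu \cite{npp} as a dictionary that carries all of the combinatorial structure already established in Theorem~\ref{thm: main} and Proposition~\ref{prop: depth} verbatim to the Milnor fibre side. First I would recall that, by the work of Christophersen \cite{c} and Stevens \cite{s}, the Milnor fibres of the cyclic quotient singularity with link $L(p,q)$ are parameterized up to diffeomorphism by the same indexing set $\mathcal{Z}_{k}(\frac{p}{p-q})$ that parameterizes the symplectic deformation equivalence classes of minimal symplectic fillings; and that, by \cite{l} together with \cite{npp}, the correspondence is compatible: the Stein filling associated to $\textbf{n} \in \mathcal{Z}_{k}(\frac{p}{p-q})$ is orientation-preserving diffeomorphic to the Milnor fibre indexed by the same $\textbf{n}$, and by \cite{bono} each such diffeomorphism class carries a unique symplectic structure up to deformation. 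Thus the map $\textbf{n} \mapsto W_{p,q}(\textbf{n})$ and the map $\textbf{n} \mapsto (\text{Milnor fibre of }\textbf{n})$ differ only by this identification, which in particular sends the trivial tuple $(1,2,\dots,2,1)$-type initial triangulation to the Milnor fibre of the Artin smoothing component (this is the content of the paragraph preceding the corollary, citing \cite{bd}).

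Next I would observe that the graph $\mathcal{G}^{p,q}_k$ of Theorem~\ref{thm: main} is built purely from the combinatorics of $\mathcal{T}^{p,q}(\mathcal{P}_{k+1})$: its vertices are the admissible triangulations, its root is the initial triangulation, its directed edges are sequences of diagonal flips, and its grading is the second Betti number, which on the level of tuples $\textbf{n}$ is a combinatorial quantity (essentially $k$ minus the number of blown-down spheres, read off from the $1$'s). None of this data references the symplectic filling except through the bijection in item (1). Therefore I would simply define the "rational blowdown graph for Milnor fibres'' to be the very same graph $\mathcal{G}^{p,q}_k$, now reinterpreted via the Christophersen--Stevens parameterization so that each vertex is labeled by the Milnor fibre rather than by $W_{p,q}(\textbf{n})$, the root being the Milnor fibre of the Artin component. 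Items (1)--(4) then hold with identical proofs, provided one knows that a symplectic rational blowdown along a linear plumbing graph, performed on a Milnor fibre, again yields a Milnor fibre in the family --- but this is exactly what the diffeomorphism-and-unique-symplectic-structure package gives, since the result of the blowdown is, on the symplectic side, some $W_{p,q}(\textbf{n}')$, and $W_{p,q}(\textbf{n}')$ is diffeomorphic to the Milnor fibre of $\textbf{n}'$ by \cite{npp}. The analogue of Proposition~\ref{prop: depth} follows in the same way: the rational blowdown depth is defined via minimal paths in $\mathcal{G}^{p,q}_k$ from the root, and $\dpt(\textbf{n})$ is a combinatorial invariant of $\textbf{n}$, so the inequality $\mathrm{depth} \le \dpt(\textbf{n})$ transports directly.

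The main obstacle, and the only point requiring genuine care rather than bookkeeping, is verifying that the correspondence of \cite{npp} is natural with respect to the operation of rational blowdown --- i.e., that if $W_{p,q}(\textbf{n})$ is obtained from $W_{p,q}(\textbf{n}')$ by a symplectic rational blowdown along a particular linear plumbing graph $\Gamma$, then the Milnor fibre of $\textbf{n}$ is obtained from the Milnor fibre of $\textbf{n}'$ by the corresponding (smooth, and in fact symplectic, again by \cite{gm, bd}) rational blowdown along $\Gamma$. This is where I would invoke the explicit nature of Lisca's one-to-one correspondence and its verification in \cite{npp}: because both constructions are pinned down by the same combinatorial label $\textbf{n}$ and because each diffeomorphism class has a unique symplectic structure up to deformation \cite{bono}, the blowdown operation on one side must match the blowdown on the other up to symplectic deformation equivalence. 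Once this compatibility is in hand, the corollary is immediate, and I would state it as such, with the remark that no new geometric input beyond \cite{npp, bono, bd} is needed --- only the translation of the already-proved Theorem~\ref{thm: main} and Proposition~\ref{prop: depth} through that dictionary.
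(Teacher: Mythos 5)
Your proposal is correct and follows essentially the same route as the paper, which offers no separate argument but declares the corollary an immediate consequence of the N\'emethi--Popescu-Pampu verification of Lisca's correspondence, together with the facts it cites from \cite{bono} (uniqueness of the symplectic structure up to deformation) and \cite{bd} (the Artin component's Milnor fibre is deformation equivalent to the minimal resolution). Your extra ``naturality'' worry is resolved exactly as the paper implicitly resolves it: since all statements are made only up to symplectic deformation equivalence and the two families share the same parameterization by $\mathcal{Z}_{k}(\textstyle{\frac{p}{p-q}})$, the transfer is pure relabeling.
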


\section{Continued fractions and  triangulations of a convex polygon} \label{sec: basic}

Suppose that $p > q \geq 1$ are coprime integers and let
$$\frac{p}{p-q}=[b_1, b_2, \ldots, b_k] :=b_1-
\cfrac{1}{b_2- \cfrac{1}{\ddots- \cfrac{1}{b_k}}}$$ be the Hirzebruch-Jung continued fraction expansion, where $b_i \geq 2$ for $1 \leq i \leq
k$. Note that the sequence $ b_1, b_2, \ldots, b_k$ is uniquely determined by the pair $(p,q)$.

\begin{Def} For any integer $k \geq 2$, a $k$-tuple of positive integers $(n_1, \ldots, n_k)$ is
called  admissible if each of the denominators in the continued
fraction $[n_1, \ldots, n_k]$ is positive. \end{Def}

\begin{Def} For any integer $k \geq 2$, let $\mathcal{Z}_{k} \subset
\mathbb{Z}^k$ denote the set of admissible $k$-tuples of positive
integers $\textbf{n}=(n_1, \ldots, n_k)$ such that $[n_1, \ldots,
n_k] =0$ and let $\mathcal{Z}_{1}=\{(0)\}$.  We set  $$\mathcal{Z}_{k}(\textstyle{\frac{p}{p-q}}) = \{ (n_1,
\ldots, n_k)\in\mathcal{Z}_{k}\,|\, 0 \leq n_i \leq b_i \;
\mbox{for} \; i=1, \ldots , k\}.$$ \end{Def}

For any integer $k \geq 2$, let $\mathcal{P}_{k+1}$ denote a convex polygon in the plane with $k+1$ vertices.  There is a simple identification of the set $\mathcal{Z}_{k}$ with the set $\mathcal{T} (\mathcal{P}_{k+1})$ of all triangulations of $\mathcal{P}_{k+1}$ due to Stevens \cite{s} as follows. Fix and label a distinguished vertex of $\mathcal{P}_{k+1}$ by  $V_\star$ and label the rest of the vertices as $V_1, \ldots, V_k$ traveling counterclockwise around $\mathcal{P}_{k+1}$. To each triangulation $\D \in \mathcal{T} (\mathcal{P}_{k+1})$, associate the $k$-tuple   $\textbf{n}=(n_1, \ldots, n_k) \in \mathcal{Z}_{k}$ so that $n_i$ is the number of triangles in $\D$ including the vertex $V_i$, which gives an explicit bijection from $\mathcal{T}(\mathcal{P}_{k+1})$ to $\mathcal{Z}_{k}$.

\begin{Def} For any $k \geq  2$, we denote the Stevens' bijection described  above as  $$\Phi_k \colon \mathcal{T} (\mathcal{P}_{k+1}) \to \mathcal{Z}_{k},$$ and  set $$\mathcal{T}^{p,q} (\mathcal{P}_{k+1}):= \Phi^{-1}_k (\mathcal{Z}_{k}(\textstyle{\frac{p}{p-q}})).$$\end{Def}

\begin{Rem} {\em Note that for any $k \geq  2$, $$|\mathcal{T}(\mathcal{P}_{k+1})| = |\mathcal{Z}_{k}| = \dfrac{1}{k} {2k-2 \choose k-1}, $$ which is nothing but the {\em Catalan number} $C_{k-1}$. } \end{Rem}

\begin{Def} \label{def: blowup} Let $s$ be an integer greater than or equal to $2$.  For any $1 \leq j \leq s-1$, the  blowup of an $s$-tuple $(n_1,  \ldots, n_s)$ of positive
integers at the $j$th term is the $(s+1)$-tuple
 $(n_1, \ldots,
n_{j-1}, n_{j}+1,1,n_{j+1}+1, n_{j+2}, \ldots,
n_s)$. We call such a blowup as an interior blowup. The exterior blowup of an $s$-tuple $ (n_1, \ldots,
n_s)$ of positive
integers is the $(s+1)$-tuple $(n_1, \ldots, n_{s-1},
n_s+1,1).$
We also say that $(0) \to (1,1)$ is the initial blowup. The inverse of a blowup is called a blowdown.
\end{Def}

It is well-known (see, for example, \cite[Lemma 2]{l}) that for any $\textbf{n} \in \mathcal{Z}_k$, there is a blowup sequence $$ (0) \to (1,1) \to \cdots \to \textbf{n}$$ starting with the initial blowup $(0) \to (1,1)$ and ending with $\textbf{n}$, although such a blowup sequence is not necessarily unique. This observation leads to the following definition of the height of $\textbf{n}$, which appeared  in \cite{bo}.

\begin{Def} \label{def: height} For $k \geq 2$, we say that
$\textbf{n} =(n_1,\ldots,n_k)\in\mathcal{Z}_{k}$ has  height $r$, and denote it  by $\hgt(\textbf{n})=r$, if $r$ is the minimal number of blowups
required to obtain $\textbf{n}$ from an $s$-tuple  $\textbf{u}_s = (1,\underbrace{2,\ldots,2}_{s-2},1)\in \mathbb{Z}^s$, for some $s \geq 2$. We also set $\textbf{u}_1=(0)$ and $\hgt(\textbf{u}_1)=0$.
\end{Def}

It follows that $\hgt(\textbf{u}_s)=0$ for all $s \geq 1$. \smallskip

\begin{Ex}{\em Consider the following blowup sequence  $$ (0)  \xrightarrow[\text{initial}]{\text{}} (1,1) \xrightarrow[\text{exterior}]{\text{}} (1,2,1) \xrightarrow[\text{interior}]{\text{}} (1,3,1,2)  \xrightarrow[\text{interior}]{\text{}} (1,3,2,1,3) $$ $$ \xrightarrow[\text{exterior}]{\text{}} (1,3,2,1,4,1) \xrightarrow[\text{interior}]{\text{}} (2,1,4,2,1,4,1) =\textbf{n} \in \mathcal{Z}_{7}.  $$ Note that there is a blowdown sequence $$ \textbf{n} = (2,\blue{1},4,2,1,4,1) \rightarrow (1,3,2,\blue{1},4,1)  \rightarrow (1,3,\blue{1},3,1) \rightarrow (1,2,2,1)= \textbf{u}_4, $$ obtained by blowing down at the leftmost {\em interior} $\blue{1}$ at each step, which shows that $\hgt(\textbf{n}) \leq 3$, and as a matter of fact $\hgt(\textbf{n}) = 3$ by Lemma~\ref{lem: height} below, an observation that was mentioned in \cite[page 1526]{bo}, without a proof. Note that there are two other blowdown sequences   $$ \textbf{n} = (2,1,4,2,\blue{1},4,1) \rightarrow (2,1,4,\blue{1},3,1)  \rightarrow (2,\blue{1},3,2,1)  \rightarrow (1,2,2,1)= \textbf{u}_4, $$ $$ \textbf{n} = (2,1,4,2,\blue{1},4,1) \rightarrow (2,\blue{1},4,1,3,1)  \rightarrow (1,3,\blue{1},3,1)  \rightarrow (1,2,2,1)= \textbf{u}_4, $$ obtained similarly but by making different choices. } \end{Ex}

\begin{Lem} \label{lem: height} By setting,  $|\textbf{n}|=n_1+\cdots+n_k$, we have $ \hgt(\textbf{n})=|\textbf{n}|-2(k-1)$ for any  $\textbf{n} \in\mathcal{Z}_{k}$.
\end{Lem}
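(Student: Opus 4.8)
The plan is to prove the formula $\hgt(\textbf{n}) = |\textbf{n}| - 2(k-1)$ by tracking how the quantity $f(\textbf{n}) := |\textbf{n}| - 2(k-1)$ behaves under a single blowup, and then using the characterization of height in terms of blowup sequences from $\textbf{u}_s$. First I would observe that a blowup always increases the length $k$ by exactly $1$. For an interior blowup at the $j$th term, $(n_1,\ldots,n_j,\ldots,n_s) \mapsto (n_1,\ldots,n_{j}+1,1,n_{j+1}+1,\ldots,n_s)$, the sum increases by $1 + 1 + 1 = 3$ (the two increments plus the new entry $1$), so $|\textbf{n}|$ goes up by $3$ while $2(k-1)$ goes up by $2$; hence $f$ increases by exactly $1$. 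For an exterior blowup $(n_1,\ldots,n_s) \mapsto (n_1,\ldots,n_{s-1},n_s+1,1)$, the sum increases by $1 + 1 = 2$... wait, that gives $f$ unchanged, which is wrong — let me recheck. Actually for the exterior blowup the new tuple is $(n_1,\ldots,n_{s-1},n_s+1,1)$: the sum increases by $1$ (from $n_s \to n_s+1$) plus $1$ (the new trailing entry), total $2$; but $2(k-1)$ also increases by $2$, so $f$ is unchanged. That contradicts what we want. So I need to reconcile this: the resolution is that the exterior blowup should also be counted, and indeed in the paper's height definition one starts from $\textbf{u}_s$ for \emph{some} $s$, so exterior blowups don't need to contribute to height. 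Let me reconsider — actually $f(\textbf{u}_s) = (1 + 2(s-2) + 1) - 2(s-1) = (2s - 2) - (2s-2) = 0$, so $f$ vanishes on all the base tuples $\textbf{u}_s$, consistent with $\hgt(\textbf{u}_s) = 0$. And the point is: every blowup sequence from some $\textbf{u}_s$ to $\textbf{n}$ in which all blowups are interior has length exactly $f(\textbf{n})$ (each interior blowup raises $f$ by $1$, starting from $0$), while exterior blowups leave $f$ unchanged.

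Given this, the key claim to establish is a \emph{structural} one: for any $\textbf{n} \in \mathcal{Z}_k$, there exists a blowdown sequence from $\textbf{n}$ to some $\textbf{u}_s$ using \emph{only interior blowdowns}, and moreover no sequence of blowups from any $\textbf{u}_s$ to $\textbf{n}$ can be shorter than $f(\textbf{n})$. The lower bound $\hgt(\textbf{n}) \geq f(\textbf{n})$ follows immediately from the monotonicity analysis above: start from $f(\textbf{u}_s) = 0$, each interior blowup raises $f$ by $1$ and each exterior blowup keeps it fixed, so after $r$ blowups $f$ has increased by at most $r$, forcing $r \geq f(\textbf{n})$ — and this requires nothing beyond the arithmetic identity and the observation that $f$ is never \emph{decreased} by a blowup. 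For the upper bound $\hgt(\textbf{n}) \leq f(\textbf{n})$, I would exhibit an explicit interior-blowdown procedure: since $\textbf{n}$ represents zero and is admissible, if $k \geq 3$ it must contain an interior $1$ (this is a standard fact about admissible tuples representing zero — e.g. via the continued fraction / triangulation correspondence, a triangulation of $\mathcal{P}_{k+1}$ with $k+1 \geq 4$ vertices always has an ``ear,'' i.e.\ a vertex lying in exactly one triangle, which by Stevens' bijection is an interior entry equal to $1$, provided it is not an endpoint). Blowing down at that interior $1$ reduces $k$ by $1$ and decreases $f$ by exactly $1$, and crucially keeps the result admissible and representing zero (this is the content of the blowdown being the inverse of a legal interior blowup). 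Iterating, we strictly decrease $f$ until we reach length $\leq$ something small; at length $k$ with $f = 0$ the only admissible zero-representing tuples with no interior $1$ are exactly the $\textbf{u}_s$. So after $f(\textbf{n})$ interior blowdowns we land on some $\textbf{u}_s$, giving a blowup sequence of that length and hence $\hgt(\textbf{n}) \leq f(\textbf{n})$.

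I would organize the write-up as follows. Step 1: the arithmetic lemma — compute $f$ on $\textbf{u}_s$ (getting $0$) and compute the change in $f$ under an interior blowup ($+1$) and an exterior blowup ($0$); conclude the lower bound $\hgt(\textbf{n}) \geq |\textbf{n}| - 2(k-1)$. Step 2: the existence-of-interior-$1$ lemma — argue that any $\textbf{n} \in \mathcal{Z}_k$ with $k \geq 3$ which is not of the form $\textbf{u}_s$ has an interior entry equal to $1$; the cleanest argument uses Stevens' bijection $\Phi_k$ and the combinatorics of triangulations (a triangulation of a polygon with at least $4$ vertices has at least two ears, and at most two of all its vertices can fail to be interior, namely those adjacent to $V_\star$; hence at least one ear is an interior vertex), though one can also give a direct continued-fraction argument by induction. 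Step 3: the descent — blow down at an interior $1$ repeatedly, noting admissibility and the zero-property are preserved (inverse of interior blowup) and that each step decreases $f$ by $1$; terminate at some $\textbf{u}_s$, yielding $\hgt(\textbf{n}) \leq |\textbf{n}| - 2(k-1)$ and completing the proof. The main obstacle I anticipate is Step 2 — proving cleanly that a non-trivial admissible zero-tuple always has an \emph{interior} $1$ (not merely a boundary $1$), and handling the small base cases ($k = 2$, where $\textbf{n} = (1,1) = \textbf{u}_2$ and $f = 0$, and $k = 3$) without circularity; everything else is bookkeeping with the linear function $f$.
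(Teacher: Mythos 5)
Your overall strategy coincides with the paper's: the function $f(\textbf{n})=|\textbf{n}|-2(k-1)$ vanishes on every $\textbf{u}_s$, goes up by $1$ under an interior blowup and is unchanged under an exterior blowup, giving $\hgt(\textbf{n})\geq f(\textbf{n})$; the upper bound is then obtained by descending via interior blowdowns. Steps 1 and 3 are fine. The genuine gap is in your Step 2, the claim that every $\textbf{n}\in\mathcal{Z}_k$ with $k\geq 3$ and $\textbf{n}\neq\textbf{u}_k$ has an interior $1$. The ear argument you sketch does not prove this: under Stevens' bijection the entries of $\textbf{n}$ correspond only to $V_1,\ldots,V_k$, so \emph{three} vertices of $\mathcal{P}_{k+1}$ fail to give interior entries ($V_\star$ as well as $V_1$ and $V_k$), and, more seriously, ``at least two ears'' cannot by itself force an interior ear: the initial triangulation $\D(\textbf{u}_k)$ has exactly two ears and they sit precisely at $V_1$ and $V_k$. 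Any correct argument must use the hypothesis $\textbf{n}\neq\textbf{u}_s$, which your sketch never invokes; as written it would ``prove'' that $\textbf{u}_k$ itself has an interior $1$, which is false. So the step you yourself flagged as the main obstacle is exactly where the proposal breaks down.

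For comparison, the paper closes this step by an indirect argument rather than by counting ears: using the cited fact that every element of $\mathcal{Z}_k$ arises from $(0)\to(1,1)\to\cdots$ by blowups (hence admits a blowdown staying in $\mathcal{Z}_{k-1}$), it observes that if $\textbf{n}\neq\textbf{u}_k$ had no interior $1$, the only possible blowdown would be the exterior one at the trailing $1$, and the resulting tuple again has no interior $1$; iterating down to $(1,1)=\textbf{u}_2$ and reversing the purely exterior blowdowns forces $\textbf{n}=\textbf{u}_k$, a contradiction. You could substitute this argument for your Step 2, or repair the triangulation route, but the repair needs more than the two-ears fact: for instance, show that since ears are pairwise non-adjacent, if no ear lies at $V_2,\ldots,V_{k-1}$ then the ears are exactly $V_1$ and $V_k$, the dual tree of the triangulation is a path, and an induction (peeling the ear at $V_1$, say) then forces $\D=\D(\textbf{u}_k)$. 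With Step 2 established in either way, the rest of your write-up matches the paper's proof.
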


\begin{proof} Write $g(\textbf{n})=|\textbf{n}|-2(k-1)$, for $\textbf{n}\in\mathcal{Z}_k$. It is easy to see that for $\textbf{n}\in\mathcal{Z}_k$, $g(\textbf{n})=0$ if and only if $\textbf{n}=\textbf{u}_k$. Since $0\leq g(\textbf{n}) - g(\textbf{n}')\leq 1$ whenever $\textbf{n}'$ is obtained by blowing down $\textbf{n}$, it follows that $\hgt(\textbf{n})\geq g(\textbf{n})$. We check that the inequality $\hgt(\textbf{n})\leq g(\textbf{n})$
also holds. To see this note that if $\textbf{n}\neq \textbf{u}_k$, then we can always perform an interior blowdown on $\textbf{n}$. Indeed,
suppose that for some $\textbf{n}\in\mathcal{Z}_k$ that is different from $\textbf{u}_k$, there are no interior $1$'s. Then blowing down
$\textbf{n}$, necessarily at an exterior $1$, would give a $(k-1)$-tuple $\textbf{n}'$ which again had no interior $1$'s. Repeatedly blowing down we
must eventually get $\textbf{u}_2$. Since each blowdown was at an exterior $1$, the original $k$-tuple $\textbf{n}$ must be
$\textbf{u}_k$, contrary to assumption. Thus blowing down $\textbf{n}$ at an interior $1$ $g(\textbf{n})$-times will
give $\textbf{u}_{k-g(\textbf{n})}$. It follows that we have $\hgt(\textbf{n})\leq g(\textbf{n})$ and hence  $\hgt(\textbf{n})=g(\textbf{n})$.
\end{proof}

\begin{Rem} {\em It follows from the proof of Lemma~\ref{lem: height}  that $ \hgt(\textbf{n})$ is the number of {\em interior} blowups in {\em any} blowup sequence $ (0) \to (1,1) \to \cdots \to \textbf{n}$.} \end{Rem}

\section{Diagonal flips along distinguished diagonals}\label{sec: flip}

In the  convex polygon $\mathcal{P}_{k+1}$, with the fixed distinguished vertex $V_\star$ and the rest of the vertices $V_1, \ldots, V_k$ labelled counterclockwise as in Section~\ref{sec: basic},  there are exactly $k-2$ {\em distinguished} diagonals $d_1, \ldots, d_{k-2}$, defined so that for each $1 \leq i \leq k-2$, the diagonal  $d_i$ connects $V_\star$ to the vertex $V_{i+1}$.

\begin{Def}  For any integer $k \geq 3$, the  triangulation $\D_\star \in T (\mathcal{P}_{k+1})$ which is obtained by using precisely the set $\{d_1, \ldots , d_{k-2}\}$ of all distinguished diagonals is called the initial triangulation.
\end{Def}

\begin{figure}[ht]  \relabelbox \small {\epsfxsize=6in
\centerline{\epsfbox{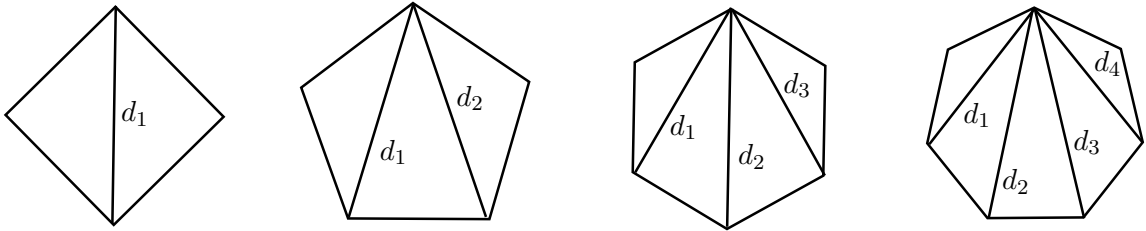}}}
\relabel{1}{$d_1$} \relabel{2}{$d_1$} \relabel{3}{$d_2$} \relabel{4}{$d_1$} \relabel{5}{$d_2$} \relabel{6}{$d_3$} \relabel{7}{$d_1$} \relabel{8}{$d_2$} \relabel{9}{$d_3$} \relabel{10}{$d_4$}
 \endrelabelbox
\caption{Initial triangulations for $k=3,4,5,6,$ respectively. }\label{fig: initialtri} \end{figure}

In Figure~\ref{fig: initialtri}, we depicted the initial triangulations for $k=3,4,5,6$. Note that if $d$ is {\em any} diagonal which appears in any triangulation  $\D \in \mathcal{T} (\mathcal{P}_{k+1})$, then the union of the  two triangles on either side of $d$ makes up a quadrilateral which is bisected by $d$ into two triangles of $\D$.

\begin{Def} \label{def: flip} Suppose that $\D \in \mathcal{T} (\mathcal{P}_{k+1})$ is a triangulation which includes a distinguished diagonal $d_i$ for some $1 \leq i \leq k-2$. A diagonal flip of $\D$ along $d_i$ is a transformation of $\D$ into another triangulation $\widetilde{\D} \in \mathcal{T} (\mathcal{P}_{k+1})$ where $d_i$ is replaced by the unique non-distinguished diagonal $\widetilde{d}_{i}$ of the  unique quadrilateral which is bisected by $d_i$ into two triangles of $\D$.
\end{Def}

In Figure~\ref{fig: diagonalflip}, for example, we depicted a sequence of diagonal flips along distinguished diagonals, starting from the initial triangulation of the heptagon.

\begin{figure}[ht]  \relabelbox \small {\epsfxsize=6in
\centerline{\epsfbox{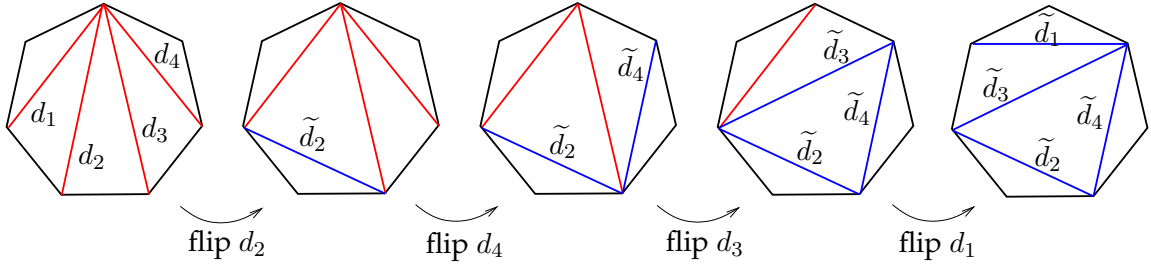}}}
 \relabel{7}{$d_1$} \relabel{8}{$d_2$} \relabel{1}{$\widetilde{d}_1$} \relabel{2}{$\widetilde{d}_2$} \relabel{3}{$\widetilde{d}_3$} \relabel{4}{$\widetilde{d}_4$}
\relabel{9}{$d_3$} \relabel{10}{$d_4$} \relabel{d}{flip $d_1$} \relabel{a}{flip $d_2$} \relabel{c}{flip $d_3$} \relabel{b}{flip $d_4$}  \relabel{e}{$\widetilde{d}_2$} \relabel{f}{$\widetilde{d}_2$} \relabel{g}{$\widetilde{d}_2$} \relabel{h}{$\widetilde{d}_4$} \relabel{i}{$\widetilde{d}_4$} \relabel{j}{$\widetilde{d}_3$}
 \endrelabelbox
\caption{A sequence of diagonal flips. } \label{fig: diagonalflip} \end{figure}

\begin{Rem} \label{rem: dual} Note that the non-distinguished diagonal $\widetilde{d}_{i}$ in Definition~\ref{def: flip}, depends on the quadrilateral which is determined  by specifying  its three non-distinguished vertices. We will refer to  $\widetilde{d}_{i}$ as the dual of the distinguished diagonal $d_i$ in that quadrilateral.  In other words, the dual diagonal $\widetilde{d}_{i}$ is the "image" of the distinguished diagonal $d_i$ under the diagonal flip move.  \end{Rem}

In the following, for each integer $k \geq 3$, we will describe the graded, directed, rooted, connected graph $\mathcal{G}_k$ which organizes the triangulations of the convex polygon $\mathcal{P}_{k+1}$ with respect to their {\em heights}.

\begin{Def} \label{def: ht} The height of a triangulation $\D \in \mathcal{T} (\mathcal{P}_{k+1})$  is defined as the height of $\Phi_k (\D) \in \mathcal{Z}_{k}$ under the Stevens' bijection $\Phi_k$. \end{Def}

Next we show that each diagonal flip along a distinguished diagonal increases the height of a given triangulation by one.

\begin{Lem} \label{lem: hgt}  Suppose that $\D \in  \mathcal{T}  (\mathcal{P}_{k+1})$ is a triangulation which includes a distinguished diagonal $d$. If $\widetilde{\D} \in  \mathcal{T}  (\mathcal{P}_{k+1})$ is the triangulation obtained from $\D$ by the diagonal flip along $d$, then  $\hgt(\widetilde{\D}) = \hgt(\D)+1$.
\end{Lem}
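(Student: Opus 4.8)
The plan is to translate the diagonal flip operation on triangulations into the blowup/blowdown language of $k$-tuples via the Stevens bijection $\Phi_k$, and then invoke Lemma~\ref{lem: height}. Let $\textbf{n} = \Phi_k(\D)$ and $\widetilde{\textbf{n}} = \Phi_k(\widetilde{\D})$. Since $\hgt$ of a triangulation is by definition $\hgt$ of its image under $\Phi_k$, and by Lemma~\ref{lem: height} this equals $|\cdot| - 2(k-1)$, it suffices to show that $|\widetilde{\textbf{n}}| = |\textbf{n}| + 1$; that is, the total number of triangle-vertex incidences (counted over $V_1, \ldots, V_k$, excluding $V_\star$) goes up by exactly one under a flip along a distinguished diagonal.

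First I would set up the local picture. The distinguished diagonal $d = d_i$ joins $V_\star$ to $V_{i+1}$, and it bisects a quadrilateral $Q$ whose four vertices are $V_\star$, $V_{i+1}$, and two other vertices $V_a$, $V_b$ with $a \leq i < i+1 \leq b$ (one of $V_a$, $V_b$ may coincide with a neighbor of $V_\star$, but in all cases $V_a, V_b \in \{V_1, \ldots, V_k\}$ and are distinct from $V_{i+1}$). The flip replaces $d = \overline{V_\star V_{i+1}}$ by the dual diagonal $\widetilde{d} = \overline{V_a V_b}$. The only vertices whose triangle-counts can change are the four vertices of $Q$: $V_\star$ and $V_{i+1}$ each lose one incident triangle (they were in both triangles of $Q$ before, only one after), while $V_a$ and $V_b$ each gain one (they were in one triangle of $Q$ before, both after). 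Since $V_\star$ is not among the counted vertices $V_1, \ldots, V_k$, its change does not affect $|\textbf{n}|$; the net change to $|\textbf{n}|$ is therefore $-1 + 1 + 1 = +1$, giving $|\widetilde{\textbf{n}}| = |\textbf{n}|+1$ as desired.

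Then I would close the argument: by Lemma~\ref{lem: height}, $\hgt(\widetilde{\D}) = \hgt(\widetilde{\textbf{n}}) = |\widetilde{\textbf{n}}| - 2(k-1) = (|\textbf{n}|+1) - 2(k-1) = \hgt(\textbf{n}) + 1 = \hgt(\D)+1$.

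The main obstacle I anticipate is bookkeeping the degenerate configurations of the quadrilateral $Q$ correctly — in particular making sure that in every case the two non-$V_\star$, non-$V_{i+1}$ vertices $V_a, V_b$ genuinely lie among $V_1, \ldots, V_k$ (so their contributions are counted) while $V_\star$ never is, and that the flip is always possible (i.e., $\widetilde d$ is a genuine non-distinguished diagonal, which follows since $\widetilde d$ does not touch $V_\star$). One should also double-check the edge cases $i = 1$ and $i = k-2$, where $Q$ abuts an edge of $\mathcal{P}_{k+1}$ incident to $V_\star$; the incidence count argument above is unaffected, but it is worth stating explicitly. An alternative, perhaps cleaner, route would be to exhibit directly that the flip along a distinguished diagonal corresponds under $\Phi_k$ to an interior blowup (or a composition yielding the same net count), and then cite the Remark after Lemma~\ref{lem: height} that the height counts interior blowups; but the direct incidence computation above is self-contained and avoids re-deriving the blowup correspondence here.
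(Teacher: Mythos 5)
Your proof is correct and follows essentially the same route as the paper: identify the quadrilateral containing the flipped distinguished diagonal, observe that the triangle count at the vertex opposite $V_\star$ drops by one while the counts at the other two non-distinguished vertices each rise by one (with $V_\star$ not contributing to $|\textbf{n}|$), and conclude via Lemma~\ref{lem: height}. The extra attention you give to degenerate positions of the quadrilateral is harmless but not needed beyond what the paper's argument already covers.
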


\begin{proof}  By  Definition~\ref{def: flip}, the diagonal flip along the distinguished diagonal $d$ occurs in a quadrilateral which has one distinguished vertex $V_\star$ and three other vertices, say $V_r, V_s, V_t$, ordered counterclockwise, where the distinguished diagonal $d$ that connects the vertices  $V_\star$ and $V_s$ is exchanged with the dual diagonal $\widetilde{d}$ that connects the vertices $V_r$ and $V_t$. As a result of this exchange, the number of triangles including the vertex $V_s$ decreases by $1$, but   the number of triangles including each of the two remaining non-distinguished vertices $V_r$ and $V_t$ of the quadrilateral increases by $1$.  Since the number of triangles including each vertex of the polygon $\mathcal{P}_{k+1}$, other than $V_r, V_s$ and $V_t$ remains the same,  it follows that $\hgt(\widetilde{\D}) = \hgt(\D)+1$, by Lemma~\ref{lem: height}.
\end{proof}

\begin{Def} \label{def: inverse} For any $\textbf{n} \in \mathcal{Z}_k$, we set $\D (\textbf{n}) = \Phi_k^{-1} (\textbf{n}) \in \mathcal{T} (\mathcal{P}_{k+1})$, where $$\Phi_k \colon   \mathcal{T} (\mathcal{P}_{k+1}) \to  \mathcal{Z}_k$$ is the Stevens' bijection.   \end{Def}

It follows by Definition~\ref{def: ht} that  $\hgt (\D (\textbf{n})) = \hgt(\textbf{n})$.

\begin{Prop} \label{prop: graph} For any integer $k  \geq 3$, there is a graded, directed, rooted, connected graph $\mathcal{G}_k$ such that
\begin{enumerate}
\item{there is a bijection $\psi_k$ from  the set  $\mathcal{T}(\mathcal{P}_{k+1})$ of all triangulation of the convex polygon $\mathcal{P}_{k+1}$ with $k+1$ vertices,  to the set of vertices of $\mathcal{G}_k$,  }
\item{the root of $\mathcal{G}_k$ is the image $\psi_k (\D _\star) $ of the initial triangulation  $\D _\star = \D (\textbf{u}_k)$, }
\item{if the triangulation $\widetilde{\D} \in \mathcal{T}(\mathcal{P}_{k+1})$ is obtained from the triangulation $\D \in \mathcal{T}(\mathcal{P}_{k+1})$ by a single diagonal flip along a distinguished diagonal,  then there is a directed edge from  the vertex $\psi_k (\D)$ to the vertex $\psi_k (\widetilde{\D})$, and }
\item{each vertex $\psi_k (\D)$ is graded by the height of $\D$ and the grading increases by one along each directed edge.}
\end{enumerate}
\end{Prop}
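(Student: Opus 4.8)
The plan is to construct $\mathcal{G}_k$ in the most economical way and then check the four items together with the assertion that $\mathcal{G}_k$ is rooted and connected. I would take the vertex set of $\mathcal{G}_k$ to be $\mathcal{T}(\mathcal{P}_{k+1})$ itself, with $\psi_k$ the identity map, so that item (1) is immediate; declare a directed edge from $\D$ to $\widetilde{\D}$ exactly when $\widetilde{\D}$ is obtained from $\D$ by a single diagonal flip along a distinguished diagonal (Definition~\ref{def: flip}), so that item (3) holds by construction; and grade the vertex $\D$ by $\hgt(\D)$ (Definition~\ref{def: ht}). Item (4) then follows at once from Lemma~\ref{lem: hgt}, which says that each such flip raises the height by exactly one: the grading is compatible with the edges, $\mathcal{G}_k$ has no directed cycle, and any directed path from $\D_\star$ to a vertex $\D$ has length exactly $\hgt(\D)$.

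For item (2) I would first record two facts about the initial triangulation $\D_\star$. Counting triangles at each vertex — the triangles of $\D_\star$ are $(V_\star, V_i, V_{i+1})$ for $1 \leq i \leq k-1$, so $V_1$ and $V_k$ each lie in one triangle while every $V_i$ with $1 < i < k$ lies in two — gives $\Phi_k(\D_\star) = (1,2,\ldots,2,1) = \textbf{u}_k$, hence $\D_\star = \D(\textbf{u}_k)$, and $\hgt(\D_\star) = \hgt(\textbf{u}_k) = 0$ by Lemma~\ref{lem: height}. Since every edge raises the height, a vertex of height $0$ has no incoming edge; in particular $\D_\star$ has none.

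The core of the argument — and the step I expect to be the main obstacle — is to prove the converse: every triangulation $\D \neq \D_\star$ has an incoming edge. I would argue that such a $\D$ must contain a triangle $(V_\star, V_a, V_b)$ with $b \geq a+2$. Indeed, if every triangle of $\D$ at $V_\star$ had the form $(V_\star, V_a, V_{a+1})$, then reading the fan of triangles at $V_\star$ counterclockwise — the first uses the polygon side $V_\star V_1$ and the last the side $V_\star V_k$ — forces them to be exactly $(V_\star, V_1, V_2), (V_\star, V_2, V_3), \ldots, (V_\star, V_{k-1}, V_k)$, which already cover $\mathcal{P}_{k+1}$ and hence constitute all of $\D$, so $\D = \D_\star$. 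Given a triangle $(V_\star, V_a, V_b)$ with $b \geq a+2$, the segment $e = V_a V_b$ is a non-distinguished diagonal, and the quadrilateral it bisects in $\D$ is $V_\star V_a V_c V_b$, where $(V_a, V_c, V_b)$ with $a < c < b$ is the second triangle of $\D$ along $e$; the other diagonal of this quadrilateral is $V_\star V_c = d_{c-1}$, which is distinguished because $2 \leq c \leq k-1$. Replacing $e$ by $d_{c-1}$ in $\D$ produces a triangulation $\D'$ containing the distinguished diagonal $d_{c-1}$, and the diagonal flip of $\D'$ along $d_{c-1}$ returns $\D$; so there is a directed edge $\D' \to \D$. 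This is the one place where the argument is genuinely geometric rather than bookkeeping on top of Lemmas~\ref{lem: height} and~\ref{lem: hgt}, though I do not expect it to be difficult.

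The remaining claims are formal. Together, the last two paragraphs show that $\D_\star$ is the unique vertex of $\mathcal{G}_k$ with no incoming edge, so $\mathcal{G}_k$ is rooted with root $\psi_k(\D_\star) = \D_\star$, which completes item (2). Connectivity I would prove by induction on height: by Lemma~\ref{lem: height} the only vertex of height $0$ is $\D_\star$, and any vertex $\D$ of height $h \geq 1$ differs from $\D_\star$, hence receives an edge from some $\D'$ of height $h-1$ (Lemma~\ref{lem: hgt}), which is joined to $\D_\star$ by a directed path by the inductive hypothesis; prepending that path to the edge $\D' \to \D$ gives a directed path from $\D_\star$ to $\D$. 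In particular every vertex is reachable from the root and the underlying undirected graph of $\mathcal{G}_k$ is connected.
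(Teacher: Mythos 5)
Your proof is correct, and the graph you build (vertices all of $\mathcal{T}(\mathcal{P}_{k+1})$, one directed edge per flip along a distinguished diagonal, grading by height via Lemma~\ref{lem: hgt}) is necessarily the same object as in the paper; the difference lies in how rootedness and connectivity are established. The paper builds $\mathcal{G}_k$ row by row from the root and then invokes Lemma~\ref{lem: once}: a blowdown sequence for $\textbf{n}$ at interior $1$'s corresponds, by peeling off ``ear'' triangles (two boundary edges meeting at the vertex being blown down, whose interior edge is the dual of a distinguished diagonal), to a directed path from $\D(\textbf{u}_k)$ to $\D(\textbf{n})$; this simultaneously shows every triangulation appears and that the graph is connected. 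You instead prove the purely local statement that every $\D\neq\D_\star$ has an in-neighbor: any such $\D$ contains a triangle $(V_\star,V_a,V_b)$ with $b\geq a+2$, and un-flipping the non-distinguished diagonal $V_aV_b$ against $d_{c-1}=V_\star V_c$ in the quadrilateral $V_\star V_aV_cV_b$ produces a predecessor of height one less; induction on height (with Lemma~\ref{lem: height} giving $\D_\star$ as the unique height-$0$ vertex) then yields connectivity and the uniqueness of the source. Your route is more elementary and self-contained for this proposition, since it bypasses Lemma~\ref{lem: once} entirely; what the paper's route buys is the finer bijection between interior-$1$ blowdown sequences and directed paths from the root, which is not a luxury but is used later (e.g.\ in Lemmas~\ref{lem: cont} and~\ref{lem: same} and in the depth estimates), so in the context of the paper the extra machinery earns its keep even though it is not needed to prove Proposition~\ref{prop: graph} itself.
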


\begin{proof} Fix any integer $k  \geq 3$. We take the set $\mathcal{T}(\mathcal{P}_{k+1})$  of triangulations of the convex polygon $\mathcal{P}_{k+1}$ as the vertices of our graph $\mathcal{G}_k$, which implicitly defines the bijection $\psi_k$ in Proposition~\ref{prop: graph}. In the following we suppress $\psi_k$ from the notation.  To construct the graph $\mathcal{G}_k$, we organize the triangulations in $\mathcal{T}(\mathcal{P}_{k+1})$ with respect to their heights. We define the root of $\mathcal{G}_k$ as the initial triangulation $ \D _\star = \D (\textbf{u}_k) \in  \mathcal{T}  (\mathcal{P}_{k+1})$, which is the only triangulation of height zero, by Lemma~\ref{lem: height}.

Right below the root vertex $\D (\textbf{u}_k)$, we place vertices in the first row, corresponding to height $1$ triangulations of $\mathcal{P}_{k+1}$, each of which is obtained from  $\D (\textbf{u}_k)$ by a single diagonal flip along a distinguished diagonal. Since there are $k-2$ distinguished diagonals $\{d_1, \ldots, d_{k-2}\}$ of $\mathcal{P}_{k+1}$, we have $k-2$ height $1$ triangulations which are naturally ordered from left to right according to which distinguished diagonal we flip. Moreover, we insert an edge that connects the root vertex to each of the height $1$ triangulations. Hence the root vertex has no incoming edges, by definition, and is connected to $k-2$ distinct vertices by outgoing edges denoted $e_1, \ldots, e_{k-2}$, respectively, so that the end vertex of $e_i$ is obtained from $\D (\textbf{u}_k)$ by the diagonal flip along the distinguished diagonal $d_i$. See Figure~\ref{fig: hexagon1} for the case $k=5$.

\begin{figure}[ht] \small {\epsfxsize=5.5in
\centerline{\epsfbox{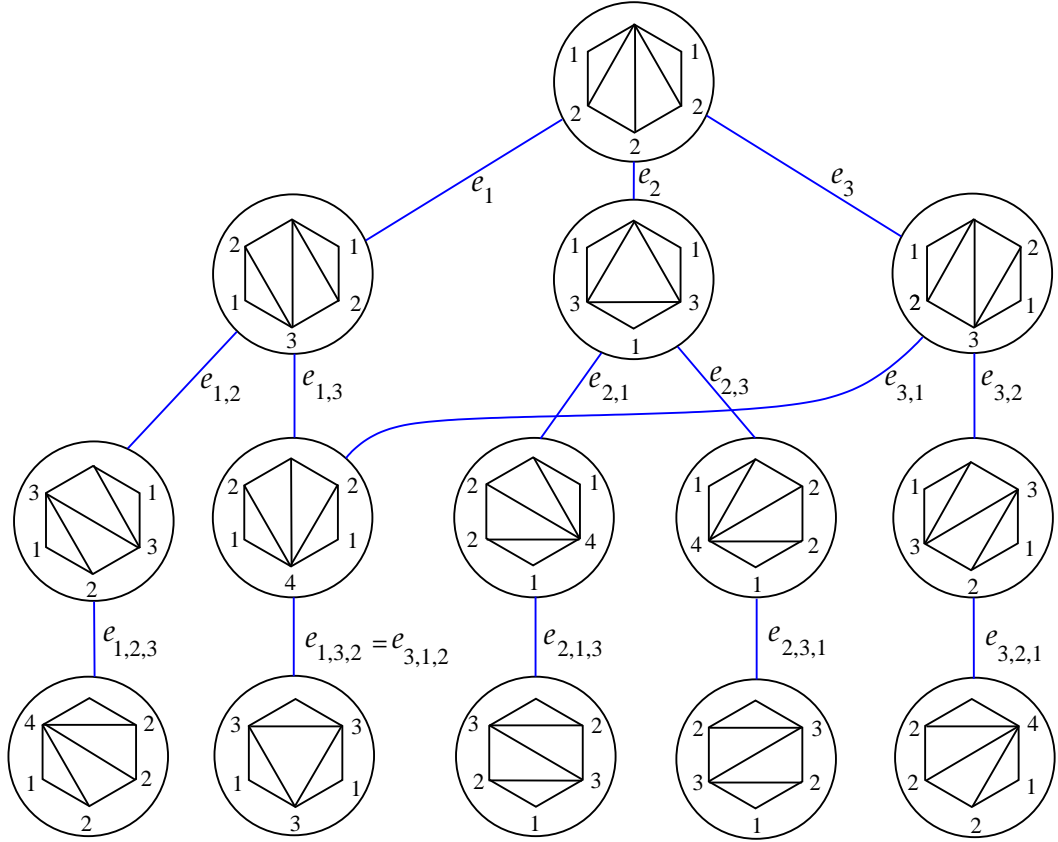}}}
\caption{The graded, directed, rooted, connected graph  $\mathcal{G}_5$ of the triangulations of the hexagon. } \label{fig: hexagon1} \end{figure}

Next we place the triangulations (i.e. vertices) of height $2$ in a row right below the row of vertices of height $1$, and insert the connecting edges between height $1$ and height $2$ vertices as follows. Note that each height $2$ triangulation is obtained from some height $1$ triangulation by a single diagonal flip. Now consider the left-most vertex in the first row, and apply single diagonal flip along each of the remaining distinguished diagonals, in the order of increasing indices of the diagonals. Then move on to the next vertex in the first row, repeat the same process and place the new vertices in the second row right next to the already existing vertices.  It is clear that one can apply the same process for each of the vertices in the first row, with the {\em caveat} that some height $2$ triangulation may be obtained from two distinct height $1$ triangulations. To avoid repetitions, we employ the following rule: if a height $2$ triangulation already appears in the second row, we do not insert a new vertex if the same triangulation can be obtained from another height $1$ triangulation. For example, the triangulation $\D ((2,1,4,1,2))$ of height $2$ in Figure~\ref{fig: hexagon1} can be obtained either from the triangulation $\D((2,1,3,2,1))$ or the triangulation $\D((1,2,3,1,2))$.

Now we explain how to insert edges between height $1$ and height $2$ vertices. If a height $2$ triangulation is obtained by a diagonal flip along a distinguished diagonal $d_j$ from a height $1$ triangulation which is the end point of some edge $e_i$ (where we necessarily have $i \neq j$), then we insert an edge, denoted $e_{i,j}$, to connect the height $1$ triangulation to the height $2$ triangulation.

It should be clear that this procedure can be iterated until there are no more distinguished diagonals to be flipped, so that in the $r$th row, we have all the height $r$ triangulations of  $\mathcal{P}_{k+1}$, without any repetitions. We {\em orient} every edge so that the height of the end point is one higher than the height of the source. Moreover, each vertex of height $r \geq 1$ in $\mathcal{G}_k$ has {\em at least} one incoming edge and exactly $k-r-2$ outgoing edges. Note that some edges might have multiple names. For example the edge $e_{1,3,2}$ is the same as the edge $e_{3,1,2}$ in Figure~\ref{fig: hexagon1}.

To finish the proof, we need to show that every triangulation $\D \in  \mathcal{T} (\mathcal{P}_{k+1})$ appears once in $\mathcal{G}_k$ and that $\mathcal{G}_k$ is connected.  This follows from  Lemma~\ref{lem: once} below. \end{proof}

\begin{Lem} \label{lem: once} Let $\D(\textbf{n})$ be a triangulation of $\mathcal{P}_{k+1}$ and suppose that $\hgt(\textbf{n})=s$. Then there is a one-to-one correspondence between blowdown sequences $$\textbf{n}=\textbf{n}^0 \to \textbf{n}^1 \to \cdots \to \textbf{n}^s=\textbf{u}_{k-s},$$ where each blowdown is at an interior $1$, and paths
$$ \D(\textbf{u}_k)=\D(\textbf{n}_0),\D(\textbf{n}_1),\ldots,\D(\textbf{n}_s)=\D(\textbf{n}) $$
in $\mathcal{G}_k$
starting at the root vertex $\D(\textbf{u}_k)$ and ending at $\D(\textbf{n})$. \end{Lem}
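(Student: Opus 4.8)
The plan is to establish the bijection by building the map in both directions and checking mutual inverseness, with the combinatorial content of the correspondence between interior blowdowns and diagonal flips doing the real work. First I would observe that, via the Stevens bijection $\Phi_k$, an interior blowdown $\textbf{n}^j \to \textbf{n}^{j+1}$ at an interior $1$ in position $m$ (so $1 < m < k - j$, using the convention that positions index the non-distinguished vertices $V_1, \ldots, V_{k-j}$ of the polygon $\mathcal{P}_{k-j+1}$ associated to the $(k-j)$-tuple $\textbf{n}^j$) corresponds, on the triangulation side, \emph{exactly} to a diagonal flip along a distinguished diagonal. The key point is Lemma~\ref{lem: hgt}: a diagonal flip along a distinguished diagonal $d$ raises the height by one, while (by Lemma~\ref{lem: height} and the Remark following it) an interior blowdown lowers the height by one and these are inverse operations on tuples. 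So I would first prove the local statement: \emph{for a triangulation $\D(\textbf{m})$ of height $s$, the triangulations $\D(\textbf{m}')$ with $\hgt(\textbf{m}') = s - 1$ that are obtained from $\D(\textbf{m})$ by a diagonal flip along a distinguished diagonal are in bijection with the positions at which $\textbf{m}$ admits an interior blowdown.} This is essentially Definition~\ref{def: flip} read backwards together with Stevens' dictionary — flipping the distinguished diagonal $d_i$ (connecting $V_\star$ to $V_{i+1}$) back \emph{into} the triangulation is the move that decreases the count at $V_{i+1}$ by one and increases the counts at the two neighbouring non-distinguished vertices by one, which is precisely the effect on $\Phi_k$ of an interior blowdown at position $i+1$; and the constraint that $V_{i+1}$ currently lies in exactly one triangle of $\D(\textbf{m})$ after the flip is exactly the condition $m_{i+1} = 1$ after blowdown, i.e.\ that there is an interior $1$ there.

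Next I would assemble the global correspondence. Given a blowdown sequence $\textbf{n} = \textbf{n}^0 \to \textbf{n}^1 \to \cdots \to \textbf{n}^s = \textbf{u}_{k-s}$ with each step an interior blowdown, reverse it to get a blowup sequence $\textbf{u}_{k-s} \to \cdots \to \textbf{n}$ by interior blowups, and translate each interior blowup into the corresponding \emph{forward} diagonal flip along a distinguished diagonal using the local statement; this produces a sequence of triangulations $\D(\textbf{u}_{k-s}), \ldots, \D(\textbf{n})$ each obtained from the previous by a single distinguished-diagonal flip, i.e.\ a path in $\mathcal{G}_k$ — and since $\hgt(\textbf{u}_{k-s}) = 0$, the Stevens bijection and the construction of $\mathcal{G}_k$ in Proposition~\ref{prop: graph} identify $\D(\textbf{u}_{k-s})$ with the root $\D(\textbf{u}_k)$ (here I should note the harmless abuse that $\textbf{u}_{k-s}$ is a tuple of a different length; what one really means is that the blowup sequence lands on the height-zero vertex of $\mathcal{G}_k$, which is $\D(\textbf{u}_k)$). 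Conversely, given a path from the root to $\D(\textbf{n})$ in $\mathcal{G}_k$, each edge is by construction a diagonal flip along a distinguished diagonal and raises height by one (Lemma~\ref{lem: hgt}), so the path has length exactly $s = \hgt(\textbf{n})$; translating each edge backwards via the local statement yields an interior-blowdown sequence from $\textbf{n}$ down to a height-zero tuple, which must be $\textbf{u}_{k-s}$ since (as shown in the proof of Lemma~\ref{lem: height}) $\textbf{u}_{k-s}$ is the \emph{only} height-zero element of $\mathcal{Z}_{k-s}$. Mutual inverseness is then immediate because in each direction we are just applying the local bijection step by step and the two translations undo each other.

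Finally I would remark that Lemma~\ref{lem: once} implies what the proof of Proposition~\ref{prop: graph} still needs: every triangulation $\D \in \mathcal{T}(\mathcal{P}_{k+1})$ appears in $\mathcal{G}_k$ — because, by the cited \cite[Lemma 2]{l} and Lemma~\ref{lem: height}, every $\textbf{n} \in \mathcal{Z}_k$ admits \emph{some} interior-blowdown sequence to $\textbf{u}_{k-\hgt(\textbf{n})}$ (one always exists: if $\textbf{n} \neq \textbf{u}_k$ there is always an interior $1$, as shown in the proof of Lemma~\ref{lem: height}), giving at least one path from the root to $\D(\textbf{n})$, so $\mathcal{G}_k$ is connected with $\mathcal{T}(\mathcal{P}_{k+1})$ as its full vertex set. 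I expect the main obstacle to be purely bookkeeping rather than conceptual: one must set up the position-indexing carefully so that ``interior $1$ at position $m$'' on a tuple of varying length matches ``distinguished diagonal $d_{m-1}$'' on the polygon at each stage, and one must be scrupulous that the diagonal being flipped \emph{forward} is genuinely distinguished at the triangulation one is currently at (which is guaranteed because after the corresponding blowdown the vertex $V_m$ sits in a single triangle, forcing the two edges from $V_\star$ in the reverse picture to be present) and that the ``no repetitions'' rule used to build $\mathcal{G}_k$ does not collapse distinct paths — but distinct blowdown sequences give distinct \emph{sequences} of triangulations even when they pass through a common vertex, so the correspondence is with paths, not with vertices, and no collapsing occurs.
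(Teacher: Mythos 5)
There is a genuine gap, and it sits precisely where you place the weight of the argument: your ``local statement'' is false, because an interior blowdown and a diagonal flip are not inverse operations on the same objects. A flip along a distinguished diagonal acts on triangulations of the \emph{fixed} polygon $\mathcal{P}_{k+1}$ (it preserves the length $k$ of the tuple and raises $\hgt$ by one), whereas an interior blowdown shortens the tuple by one; geometrically it peels a triangle off a successively smaller polygon and is not the inverse of any flip of $\mathcal{P}_{k+1}$. Concretely, the bijection you want between ``height $s-1$ triangulations from which $\D(\textbf{m})$ is obtained by one distinguished flip'' and ``interior $1$'s of $\textbf{m}$'' fails: take $\textbf{m}=(3,1,3,1,3)\in\mathcal{Z}_5$. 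It has two interior $1$'s (hence two possible first blowdowns), but $\D(\textbf{m})$ has a \emph{single} incoming edge in $\mathcal{G}_5$, coming from $\D((2,1,4,1,2))$ by flipping $d_2$ --- this is exactly the paper's remark that $e_{1,3,2}=e_{3,1,2}$ in Figure~\ref{fig: hexagon1}. (The only non-distinguished diagonal of $\D((3,1,3,1,3))$ whose quadrilateral contains $V_\star$ is $V_1V_5$, and the vertex opposite $V_\star$ there is $V_3$ with $m_3=3\neq 1$, so your criterion ``$m_{i+1}=1$'' for un-flipping is also wrong; the un-flip does not have the same effect on $\Phi_k$ as a blowdown, since it does not change the length of the tuple.) With the local dictionary gone, the stepwise translation in both directions and the ``mutual inverseness is immediate'' conclusion collapse.

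The global assembly is also set up with the wrong order and the wrong intermediate objects. In the paper's proof the $j$-th blowdown occurs at a vertex of the original polygon and therefore determines a distinguished diagonal $d_{i_j}$ of $\mathcal{P}_{k+1}$ via the peeled triangle $\tau_j\subset\D(\textbf{n})$, and the path is obtained by flipping $d_{i_1},\dots,d_{i_s}$ \emph{in the same order} starting from $\D(\textbf{u}_k)$: first blowdown corresponds to first flip, and the intermediate vertices $\D(\textbf{n}_j)$ of the path are not the triangulations $\D(\textbf{n}^j)$ of the blowdown sequence (in Example~\ref{ex: down} the path passes through $\D((1,3,1,3,2,2,1))$, not through $\D((3,1,3,2,1,4))$ --- this is more than the ``harmless abuse'' you acknowledge at the endpoint). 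Your scheme instead pairs the reversed (blowup) sequence with the flips, i.e.\ last blowdown with first flip, and this genuinely fails: for $(3,1,3,1,3)$ the two actual paths flip $(d_1,d_3,d_2)$ and $(d_3,d_1,d_2)$, whereas your reversal of the blowdown sequence through $(2,2,1,3)$ would begin by flipping $d_2$, and flipping $d_2$, then $d_3$, then $d_1$ from $\D(\textbf{u}_5)$ lands at $\D((2,3,1,2,3))$, not at $\D((3,1,3,1,3))$; read backwards via your local statement the map is not even defined, since there is no incoming edge at $\D((3,1,3,1,3))$ corresponding to the interior $1$ at $V_2$. So the proof really does need the paper's global bookkeeping --- the triangles peeled inside the fixed polygon and the order-preserving assignment of diagonals --- rather than a predecessor-by-predecessor dictionary.
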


\begin{proof}   The first blowdown
$\textbf{n}^0\to\textbf{n}^1$ at an interior $1$ of $\textbf{n}^0$  corresponds geometrically to peeling off from $\D(\textbf{n})$ a triangle, called  $\tau_1$,
that has two edges along the boundary of $\mathcal{P}_{k+1}$ meeting at a vertex of $\mathcal{P}_{k+1}$, which corresponds to the interior $1$ that we blow down.   Let $d_{i_1}$ be the distinguished diagonal of $\mathcal{P}_{k+1}$ connecting this vertex to the distinguished vertex of $\mathcal{P}_{k+1}$. Note that there is a unique quadrilateral in $\mathcal{P}_{k+1}$, whose non-distinguished vertices are precisely the vertices of $\tau_1$, and the interior edge of $\tau_1$ is dual (see Remark~\ref{rem: dual}) to  $d_{i_1}$ in that quadrilateral, so that the interior edge of  $\tau_1$ is denoted by  $\widetilde{d}_{i_1}$.

Since we peeled off $\tau_1$ from $\mathcal{P}_{k+1}$ corresponding to the first blowdown $\textbf{n}^0\to\textbf{n}^1$, the remaining polygon can be identified with $\mathcal{P}_{k}$, which is embedded in $\mathcal{P}_{k+1}$.  The second blowdown
$\textbf{n}^1 \to \textbf{n}^2$ at an interior $1$ of $\textbf{n}^1$  corresponds geometrically to peeling off from $\D(\textbf{n}^1)$ a triangle, called  $\tau_2$,
that has two edges along the boundary of $\mathcal{P}_{k}$ meeting at a vertex of $\mathcal{P}_{k}$, which corresponds to the interior $1$ that we blow down. The distinguished diagonal of $\mathcal{P}_{k}$ connecting this vertex to the distinguished vertex of $\mathcal{P}_{k}$ is also a distinguished diagonal of   $\mathcal{P}_{k+1}$ by the embedding of $\mathcal{P}_{k}$ into $\mathcal{P}_{k+1}$. We label this distinguished diagonal of $\mathcal{P}_{k+1}$ as $d_{i_2}$.  Moreover,  there is a unique quadrilateral in $\mathcal{P}_{k} \subset \mathcal{P}_{k+1}$, whose non-distinguished vertices are precisely the vertices of $\tau_2$, where the interior edge of $\tau_2$ is dual to  $d_{i_2}$ in that quadrilateral, so that the interior edge of  $\tau_2$ is denoted by  $\widetilde{d}_{i_2}$.

Continuing in this way, until we arrive at  $\textbf{n}^s=\textbf{u}_{k-s}$, we obtain a sequence of triangles $\tau_1,\tau_2,\ldots,\tau_s$ in $\D(\textbf{n})$ and
a sequence of diagonals $d_{i_1},d_{i_2},\dots,d_{i_s}$ of $\mathcal{P}_{k+1}$. It follows by our construction that flipping the diagonals of the initial
triangulation $\D(\textbf{u}_k)\in\mathcal{T}(\mathcal{P}_{k+1})$ in the order $d_{i_1},d_{i_2},\dots,d_{i_s}$
gives precisely the triangulation $\D(\textbf{n})$.

It is easy to see that this process can be reversed.
Namely, any path starting from the root vertex of $\mathcal{G}_k$ and ending at a vertex $\D(\textbf{n})$ is
uniquely specified by a sequence of distinguished diagonals of $\mathcal{P}_{k+1}$, and this sequence of diagonals
via the canonically associated sequence of triangles specifies a unique blowdown sequence starting at $\textbf{n}$
and ending at $\textbf{u}_{k-s}$.
\end{proof}

\begin{Ex} \label{ex: down} {\em We illustrate the proof of Lemma~\ref{lem: once} for $\textbf{n} = (3,2,1,4,2,1,4) \in \mathcal{Z}_7$. Let $\D(\textbf{n}) \in \mathcal{G}_7$ be the corresponding triangulation of the octagon depicted in  Figure~\ref{fig: octagon2}. We observe that $\hgt(\textbf{n})=5$ by Lemma~\ref{lem: height}  and take the blowdown sequence
$$ \textbf{n}=(3,2,1,4,2,1,4) \to \textbf{n}^1 = (3,1,3,2,1,4) \to \textbf{n}^2 = (2,2,2,1,4)   \to $$ $$ \textbf{n}^3 = (2,2,1,3) \to   \textbf{n}^4 = (2,1,2) \to \textbf{n}^5=(1,1)=\textbf{u}_{2},$$ which we depicted geometrically in the top row of  Figure~\ref{fig: octagon}, by peeling off the triangles $\tau_1, \tau_2, \ldots,  \tau_5$ in order. The corresponding path  $$ \D(\textbf{n}_0)=\D(\textbf{u}_7)=\D((1,2,2,2,2,2,1)),\D(\textbf{n}_1) = \D((1,3,1,3,2,2,1)),$$ $$\D(\textbf{n}_2) = \D((2,2,1,4,2,2,1)), \D(\textbf{n}_3) = \D((2,2,1,4,3,1,2)),$$  $$ \D(\textbf{n}_4) = \D((2,2,1,5,2,1,3)), \D(\textbf{n}_5)=\D(\textbf{n}) $$ of vertices
of  $\mathcal{G}_7$ is depicted in the bottom row of  Figure~\ref{fig: octagon}.

\begin{figure}[ht]  \small {\epsfxsize=1.4in
\centerline{\epsfbox{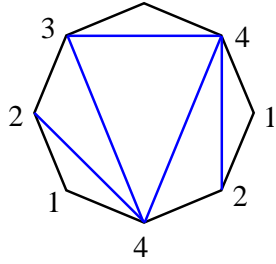}}}
\caption{The triangulation $\D((3,2,1,4,2,1,4))$ of the octagon.  }\label{fig: octagon2} \end{figure}

\begin{figure}[ht]  \relabelbox \small {\epsfxsize=6in
\centerline{\epsfbox{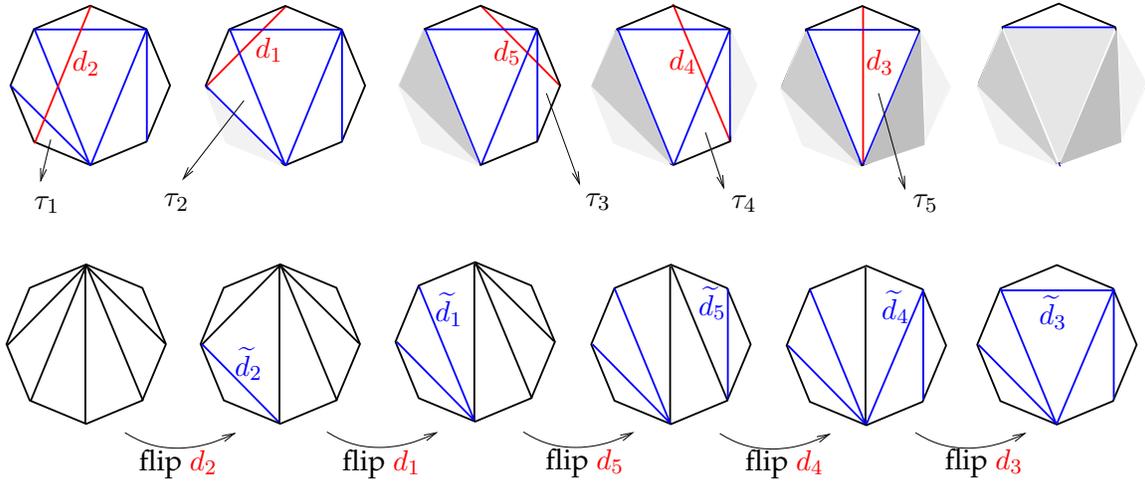}}}
\relabel{1}{\red{$d_2$}} \relabel{2}{\red{$d_1$}} \relabel{3}{\red{$d_5$}} \relabel{4}{\red{$d_4$}} \relabel{5}{\red{$d_3$}} \relabel{a}{$\tau_1$} \relabel{b}{$\tau_2$} \relabel{c}{$\tau_3$} \relabel{d}{$\tau_4$} \relabel{e}{$\tau_5$} \relabel{f}{\blue{$\widetilde{d}_2$}} \relabel{g}{\blue{$\widetilde{d}_1$}} \relabel{h}{\blue{$\widetilde{d}_5$}} \relabel{i}{\blue{$\widetilde{d}_4$}} \relabel{j}{\blue{$\widetilde{d}_3$}} \relabel{6}{flip \red{$d_2$}} \relabel{7}{flip \red{$d_1$}} \relabel{8}{flip \red{$d_5$}} \relabel{9}{flip \red{$d_4$}} \relabel{10}{flip \red{$d_3$}}
 \endrelabelbox
\caption{Peeling off triangles and flipping diagonals.}\label{fig: octagon} \end{figure}

Note that in this example, there are six distinct blowdown sequences starting from $ \textbf{n}=(3,2,1,4,2,1,4)$ and ending with $\textbf{u}_{2}$, which is equivalent to the fact that there are six distinct paths in the graph $\mathcal{G}_7$ from the root vertex $\D(\textbf{u}_7)$ to the vertex $\D(\textbf{n})$.}
\end{Ex}

\section{Flipping contiguously and Riemenschneider's point diagrams}

\begin{Def} Suppose that $p > q \geq 1$ are coprime integers and let
$\textstyle{\frac{p}{p-q}}=[b_1, b_2, \ldots, b_k]$ be the Hirzebruch-Jung continued fraction expansion, where $b_i \geq 2$ for $1 \leq i \leq
k$. We set $$\hj(\textstyle{\frac{p}{p-q}})= (b_1, b_2, \ldots, b_k).$$
\end{Def}

Note that the $k$-tuple $(b_1, b_2, \ldots, b_k)$ of integers  is uniquely determined by the pair $(p,q)$. Similarly, if $\textstyle{\frac{p}{q}}=[a_1, a_2, \ldots, a_r]$, then we set $\hj(\textstyle{\frac{p}{q}})= (a_1, a_2, \ldots, a_r).$ There is a duality between $\hj(\textstyle{\frac{p}{p-q}})= (b_1, b_2, \ldots, b_k),$ and $\hj(\textstyle{\frac{p}{q}})= (a_1, a_2, \ldots, a_r)$
obtained by using the  {\em Riemenschneider's point diagram method} \cite{r}: place in the $i$th row $b_i - 1$ dots, the first one under the last one of the $(i- 1)$st row; then column $j$ contains $a_j - 1$ dots. Using this method, one can compute $\hj(\textstyle{\frac{p}{p-q}})$ from $\hj(\textstyle{\frac{p}{q}})$  and vice-versa. See Figure~\ref{fig: point}, for an example of the Riemenschneider's point diagram method.

\begin{Def} Suppose that $$\D(\textbf{n}_0) \xrightarrow[\text{flip}]{\text{$d_{i_1}$}} \D(\textbf{n}_1) \xrightarrow[\text{flip}]{\text{$d_{i_2}$}} \D(\textbf{n}_2) \rightarrow \cdots \rightarrow \D(\textbf{n}_{r-1})  \xrightarrow[\text{flip}]{\text{$d_{i_r}$}} \D(\textbf{n}_r)$$ is a
path in $\mathcal{G}_k$, where $d_{i_1},d_{i_2},\ldots,d_{i_r}$ is the corresponding sequence of distinguished diagonals of $\mathcal{P}_{k+1}$ that are flipped along the edges of this path. We say that $d_{i_1},d_{i_2},\ldots,d_{i_r}$ is a contiguous sequence of distinguished diagonals in the triangulation $\D(\textbf{n}_0)$ if any successive pair $d_{i_{j+1}}, d_{i_{j+2}}$ bound a triangle in $\D(\textbf{n}_{j})$ for $ 0 \leq j \leq r-2$. We also say that $\D(\textbf{n}_0)$, $\D(\textbf{n}_1)$, $\ldots$, $\D(\textbf{n}_r)$ is a contiguous path in $\mathcal{G}_k$.

\end{Def}

\begin{Ex} \label{ex: cont} {\em In Figure~\ref{fig: ex}, we depicted a {\em contiguous} path in $\mathcal{G}_{9}$, starting from the initial triangulation $\D(\textbf{u}_9)$ and ending with $\D(2,2,2,4,2,1,3,2,5)$ obtained by flipping along the {\em contiguous} sequence $d_5$, $d_4$, $d_6$, $d_7$, $d_3$, $d_2$, $d_1$  of distinguished diagonals in the initial triangulation of the decagon.

\begin{figure}[ht]\small {\epsfxsize=6in
\centerline{\epsfbox{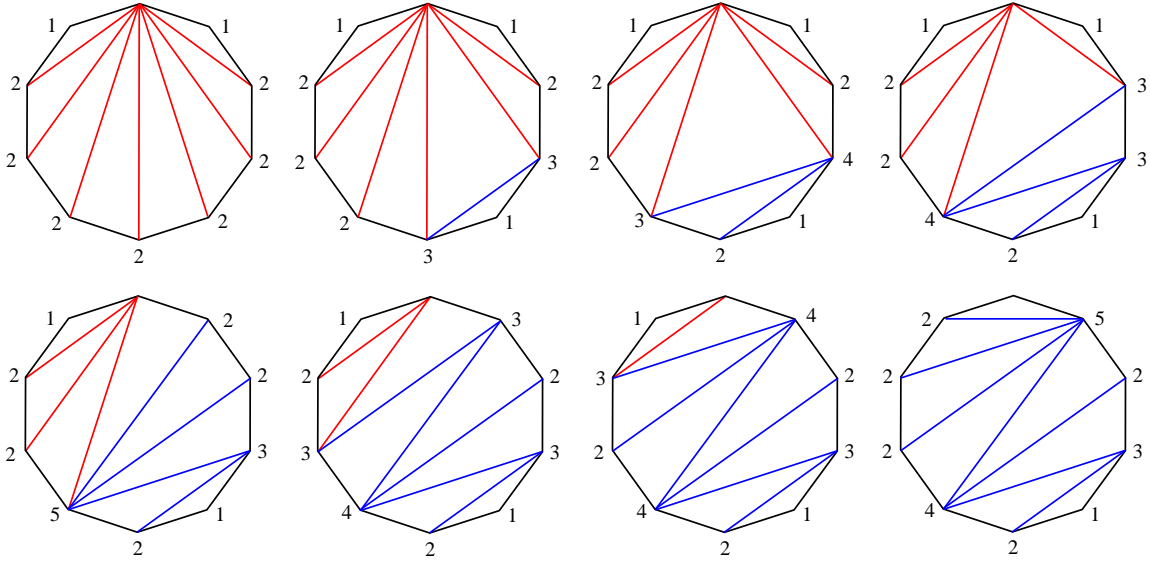}}}
\caption{Flips along a contiguous sequence  $d_5$, $d_4$, $d_6$, $d_7$, $d_3$, $d_2$, $d_1$  of distinguished diagonals in the initial triangulation of the decagon.} \label{fig: ex} \end{figure}
}
\end{Ex}

\begin{Rem} {\em In Example~\ref{ex: down}, the sequence $d_2$, $d_1$, $d_5$, $d_4$, $d_3$ of distinguished diagonals is not contiguous in $\D(\textbf{u}_7)$, since the pair $d_1$, $d_5$ do not bound a triangle in $\D(\textbf{n}_1)$. }
\end{Rem}

\begin{Lem} \label{lem: cont} Fix an integer $k \geq 3$ and let  $\textbf{n} \in \mathcal{Z}_k$ so that $\hgt  (\textbf{n} ) \geq 1$. Then the following are equivalent:
 \begin{enumerate}
 \item{there is a contiguous path from $\D(\textbf{u}_k) $ to $\D(\textbf{n})$ in $\mathcal{G}_k$   }
 \item{$\dpt (\textbf{n})= 1$ }
  \item{ there is a unique path from $\D(\textbf{u}_k) $ to $\D(\textbf{n})$ in $\mathcal{G}_k$  }
  \end{enumerate}
\end{Lem}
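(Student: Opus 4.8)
The strategy is to pass, via Lemma~\ref{lem: once}, from paths in $\mathcal{G}_k$ to interior blowdown sequences of $\textbf{n}$ and the triangles they peel off $\D(\textbf{n})$, proving $(2)\Leftrightarrow(3)$ combinatorially and $(1)\Leftrightarrow(2)$ by tracking the geometry of the peeling. Recall that every path $\D(\textbf{u}_k)=\D(\textbf{n}_0),\dots,\D(\textbf{n}_s)=\D(\textbf{n})$ from the root has $s=\hgt(\textbf{n})$ edges, and corresponds (Lemma~\ref{lem: once}) to an interior blowdown sequence $\textbf{n}=\textbf{n}^0\to\cdots\to\textbf{n}^s=\textbf{u}_{k-s}$; here the $j$-th blowdown peels an ear $\tau_j$ of $\D(\textbf{n}^{j-1})$ at the interior vertex $W_j$ it kills, $W_j$ living in the sub-polygon $\mathcal{P}^{(j-1)}:=\mathcal{P}_{k+1}\setminus\{W_1,\dots,W_{j-1}\}\cong\mathcal{P}_{k-j+2}$ embedded in $\mathcal{P}_{k+1}$, and $d_{i_j}$ is the distinguished diagonal $V_\star W_j$. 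I will also use the elementary fact that for $\ell\geq 3$ no two consecutive entries of an element of $\mathcal{Z}_\ell$ are both $1$ — if $n_i=n_{i+1}=1$, the unique triangle on the boundary edge $V_iV_{i+1}$ is the only triangle at $V_i$ and the only triangle at $V_{i+1}$, and chasing the other boundary edges at $V_i$ and $V_{i+1}$ forces $\mathcal{P}_{\ell+1}$ to have only three vertices. Consequently an interior $1$ of an element of $\mathcal{Z}_k$ ($k\geq 3$) has both neighbours $\geq 2$, so blowing it down gives an element of $\mathcal{Z}_{k-1}$ and drops $|\cdot|-2(k-1)=\hgt(\cdot)$ by exactly $1$ (as in the proof of Lemma~\ref{lem: height}), and by that proof the resulting tuple has a further interior $1$ unless it is some $\textbf{u}_m$.

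\emph{Proof of $(2)\Leftrightarrow(3)$.} For $\neg(2)\Rightarrow\neg(3)$: if $\dpt(\textbf{n})\geq 2$ there are at least two interior $1$'s; blowing down each one extends (by the above) to a full interior blowdown sequence ending at $\textbf{u}_{k-s}$, and distinct first blowdowns peel distinct ears, hence give distinct paths by Lemma~\ref{lem: once}. For $(2)\Rightarrow(3)$ the key point is that if $\dpt(\textbf{n})=1$, then blowing down the unique interior $1$ (at position $i$, say) produces $\textbf{n}'$ with $\dpt(\textbf{n}')\leq 1$: since $\textbf{n}'$ is obtained from $\textbf{n}$ by deleting the $1$ and decreasing its two neighbours by $1$, a new interior $1$ of $\textbf{n}'$ can occur only at one of those two neighbours, and if both occurred then $\textbf{n}'$ would contain two consecutive $1$'s in a tuple of length $k-1\geq 3$, impossible. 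Inducting on $\hgt(\textbf{n})$: when $\dpt(\textbf{n})=1$ the first blowdown is forced, $\hgt$ drops by $1$, and $\dpt$ stays $\leq 1$, hence $=1$ as long as we have not reached $\textbf{u}$; so the interior blowdown sequence is unique, and therefore (Lemma~\ref{lem: once}) so is the path.

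\emph{Proof of $(1)\Leftrightarrow(2)$.} Unwinding the construction in the proof of Lemma~\ref{lem: once} gives, along the path, the identity $\D(\textbf{n}_\ell)=\D_\star^{(\ell)}\cup\{\tau_1,\dots,\tau_\ell\}$, where $\D_\star^{(\ell)}$ is the initial (fan) triangulation of $\mathcal{P}^{(\ell)}$: each flip $d_{i_j}=V_\star W_j$ simply glues the ear $\tau_j$ onto the fan and leaves the rest alone. Since none of the $\tau_j$ contains $V_\star$, every triangle of $\D(\textbf{n}_\ell)$ with two sides through $V_\star$ is a fan triangle; hence $d_{i_{\ell+1}}=V_\star W_{\ell+1}$ and $d_{i_{\ell+2}}=V_\star W_{\ell+2}$ bound a triangle of $\D(\textbf{n}_\ell)$ if and only if $W_{\ell+1}$ and $W_{\ell+2}$ are adjacent vertices of $\mathcal{P}^{(\ell)}$. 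Thus the path is contiguous precisely when, for every $\ell$, the vertex $W_{\ell+2}$ is a neighbour in $\mathcal{P}^{(\ell)}$ of the just-removed vertex $W_{\ell+1}$; equivalently $W_j$ lies in $\tau_{j-1}$ (as well as in $\tau_j$) for all $j\geq 2$. Granting this, $(1)\Rightarrow(2)$ follows at once: along a contiguous path, for $j\geq 2$ the vertex $W_j$ lies in the two distinct triangles $\tau_{j-1},\tau_j$ of $\D(\textbf{n})$, so it is not an interior ear; neither is any interior vertex of $\mathcal{P}^{(s)}$, since it already lies in two triangles of the fan $\D_\star^{(s)}$; so $W_1$ is the only interior ear and $\dpt(\textbf{n})=1$. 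Conversely, $(2)\Rightarrow(1)$: when $\dpt(\textbf{n})=1$ the unique blowdown sequence from $(2)\Rightarrow(3)$ kills the unique interior $1$ at each step, and deleting $W_{\ell+1}$ together with $\tau_{\ell+1}$ changes the triangle count only at the two neighbours of $W_{\ell+1}$ in $\mathcal{P}^{(\ell)}$, so the unique interior ear $W_{\ell+2}$ of $\D(\textbf{n}^{\ell+1})$ must be one of those two neighbours; the contiguity condition holds, and the (unique) path is contiguous.

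The step I expect to be the main obstacle is establishing the identity $\D(\textbf{n}_\ell)=\D_\star^{(\ell)}\cup\{\tau_1,\dots,\tau_\ell\}$, together with the reduction "a triangle through $V_\star$ is a fan triangle", carefully enough to rephrase the definition of contiguity as the neighbour condition above; beyond that only routine checks remain (interior versus exterior positions near the ends of a tuple, and the small cases $k=3,4$).
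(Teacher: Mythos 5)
Your proposal is correct, and it reorganizes the argument in a way that differs from the paper's. The paper proves the cycle $(1)\Rightarrow(2)\Rightarrow(3)\Rightarrow(1)$: for $(1)\Rightarrow(2)$ it peels the ear created by the first flip and induces on the remaining fan (the same geometric mechanism as your identity $\D(\textbf{n}_\ell)=\D_\star^{(\ell)}\cup\{\tau_1,\dots,\tau_\ell\}$, just phrased as ``peel $\tau_1$ and paste back''); for $(2)\Rightarrow(3)$ it blows down the unique interior $1$ successively and cites Lemma~\ref{lem: once}; and for $(3)\Rightarrow(1)$ it uses the order-swap argument: if two successively flipped diagonals did not bound a triangle, interchanging the two flips would give a second path, contradicting uniqueness. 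You never use that swap argument; instead you prove $(2)\Leftrightarrow(3)$ and $(1)\Leftrightarrow(2)$ entirely through the Lemma~\ref{lem: once} correspondence, which is a legitimate alternative since $\neg(2)\Rightarrow\neg(3)$ (two interior $1$'s give two distinct blowdown sequences, hence two paths) together with $(2)\Rightarrow(3)$ and $(1)\Leftrightarrow(2)$ closes all implications. A genuine gain of your version is that you prove the point the paper leaves implicit in $(2)\Rightarrow(3)$: that blowing down the unique interior $1$ keeps the depth at most $1$ (via the observation that no element of $\mathcal{Z}_\ell$, $\ell\geq 3$, has two consecutive entries equal to $1$), which is exactly what makes the blowdown sequence, and hence the path, unique. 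Two small points to tighten: the fan-plus-ears identity, which you rightly flag as the technical crux, does hold and follows by the one-step induction you indicate (both triangles adjacent to $V_\star W_{\ell+1}$ contain $V_\star$, hence are fan triangles, so the flip replaces them by the new fan triangle and the ear $\tau_{\ell+1}$); and in $(1)\Rightarrow(2)$ you should add the one-line remark that $W_1$ lies only in $\tau_1$ (or invoke $\hgt(\textbf{n})\geq 1\Rightarrow\dpt(\textbf{n})\geq 1$) so that the depth is exactly $1$ and not $0$.
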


\begin{proof}
Suppose that  $\textbf{n} \in \mathcal{Z}_k$ such that $\hgt  (\textbf{n} ) =s \geq 1$.

\smallskip

$(1) \Rightarrow (2)$ Suppose that there is a contiguous path $$\D(\textbf{u}_k)=\D(\textbf{n}_0) \xrightarrow[\text{flip}]{\text{$d_{i_1}$}} \D(\textbf{n}_1) \xrightarrow[\text{flip}]{\text{$d_{i_2}$}} \D(\textbf{n}_2) \rightarrow \cdots \rightarrow \D(\textbf{n}_{s-1})  \xrightarrow[\text{flip}]{\text{$d_{i_{s}}$}} \D(\textbf{n}_{s})=\D(\textbf{n})$$ in $\mathcal{G}_k$, where $d_{i_1}, \ldots,  d_{i_{s}}$ is a contiguous sequence of distinguished diagonals  of  $\D(\textbf{u}_k)$.     We would like to show that $\dpt (\textbf{n})= 1$. First of all, in the triangulation $\D(\textbf{n}_1)$, obtained from $\D(\textbf{u}_k)$ by the diagonal flip along $d_{i_1}$, there is a unique triangle $\tau_1$ whose only interior edge is  $\widetilde{d}_{i_1}$. It follows that $\dpt (\textbf{n}_1)= 1$. Now if we peel away  $\tau_1$ from $\D(\textbf{n}_1)$, the result is identical to the initial triangulation $\D(\textbf{u}_{k-1})$ of $\mathcal{P}_{k}$ and $d_{i_2}$, $\ldots$, $d_{i_{s}}$ is a contiguous sequence of distinguished diagonals of $\D(\textbf{u}_{k-1})$. By a straightforward inductive argument and pasting the triangle $\tau_1$ back we see that $\dpt (\textbf{n})= 1$. Namely, $\textbf{n}$  has exactly one {\em interior} component that is equal to $1$.

$(2) \Rightarrow (3)$ Suppose that $\dpt (\textbf{n})= 1$, which, by definition,  means that $\textbf{n}$ has exactly one interior component that is equal to $1$. By blowing down successively the unique interior $1$ at each step, we get a blowdown sequence  $$\textbf{n}=\textbf{n}^0 \to \textbf{n}^1 \to \cdots \to \textbf{n}^{s}=\textbf{u}_{k-s}.$$  Then according to Lemma~\ref{lem: once}, there is   a unique path
$$\D(\textbf{u}_k)=\D(\textbf{n}_0), \D(\textbf{n}_1), \ldots,  \D(\textbf{n}_{s})=\D(\textbf{n})$$ in $\mathcal{G}_k$,
starting at the root vertex $\D(\textbf{u}_k)$ and ending at $\D(\textbf{n})$.

$(3) \Rightarrow (1)$ Suppose that there is a unique path
$$\D(\textbf{u}_k)=\D(\textbf{n}_0) \xrightarrow[\text{flip}]{\text{$d_{i_1}$}} \D(\textbf{n}_1) \xrightarrow[\text{flip}]{\text{$d_{i_2}$}} \D(\textbf{n}_2) \rightarrow \cdots \rightarrow \D(\textbf{n}_{s-1})  \xrightarrow[\text{flip}]{\text{$d_{i_{s}}$}} \D(\textbf{n}_{s})=\D(\textbf{n})$$ in $\mathcal{G}_k$, where $d_{i_1}, \ldots,  d_{i_{s}}$ is a sequence of distinguished diagonals  of  $\D(\textbf{u}_k)$. Then  we claim that $d_{i_1}, \ldots, d_{i_{s}}$ must be  a {\em contiguous} sequence of distinguished diagonals in $\D(\textbf{n})$.  Indeed if $d_{i_j}$ and $d_{i_{j+1}}$ were not adjacent for some $j$, then
we could find an alternate path from $\D (\textbf{u}_k)$ to $\D (\textbf{n})$ in $\mathcal{G}_k$ by interchanging the order in which we
flip $d_{i_j}$ and $d_{i_{j+1}}$, contradicting the uniqueness of the path between $\D (\textbf{u}_k)$ to $\D (\textbf{n})$ in $\mathcal{G}_k$.
\end{proof}

\begin{Lem} \label{lem: dep} Fix an integer $k \geq 3$ and let  $\textbf{n}, \textbf{n}' \in \mathcal{Z}_k$. Suppose that there exists a unique path from $\D(\textbf{n})$ to $\D(\textbf{n}')$ in $\mathcal{G}_k$. Then $\dpt (\textbf{n}') - \dpt (\textbf{n}) \leq  1$.
\end{Lem}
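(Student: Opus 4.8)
The plan is to use Lemma~\ref{lem: cont}, which characterizes unique paths from the root. The issue is that Lemma~\ref{lem: cont} concerns paths \emph{from the root vertex} $\D(\textbf{u}_k)$, whereas here we have a unique path between two arbitrary vertices $\D(\textbf{n})$ and $\D(\textbf{n}')$. First I would observe that a path from $\D(\textbf{n})$ to $\D(\textbf{n}')$ in $\mathcal{G}_k$ corresponds (via the proof technique of Lemma~\ref{lem: once}, applied with $\D(\textbf{n})$ in place of the root and suitably interpreted) to a sequence of diagonal flips along distinguished diagonals, equivalently to a sequence of triangles $\tau_1, \ldots, \tau_t$ peeled off in succession, where $t = \hgt(\textbf{n}') - \hgt(\textbf{n})$. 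I would argue that uniqueness of this path forces the corresponding sequence of flipped diagonals to be \emph{contiguous} in $\D(\textbf{n})$: if two consecutive diagonals $d_{i_j}, d_{i_{j+1}}$ in the sequence did not bound a common triangle at the relevant stage, one could swap their order to produce a distinct path, exactly as in the $(3)\Rightarrow(1)$ argument of Lemma~\ref{lem: cont}.

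Next, granted contiguity, I would run the inductive bookkeeping from the $(1)\Rightarrow(2)$ part of Lemma~\ref{lem: cont}, but now tracking how $\dpt$ changes from $\D(\textbf{n})$ rather than from the initial triangulation. The key local fact is: flipping a distinguished diagonal in a triangulation replaces one interior vertex count and alters the counts at two neighbors (as spelled out in the proof of Lemma~\ref{lem: hgt}); in terms of the tuple, a single flip along $d_i$ corresponds to a blowdown followed by a blowup of the intermediate tuple. I would show that along a contiguous sequence of flips, after the first flip creates its distinguished triangle $\tau_1$ one obtains $\dpt(\textbf{n}_1) \leq \dpt(\textbf{n}) + 1$, and that each subsequent contiguous flip peels a triangle adjacent to the previously created one, so it does not create a \emph{new} isolated interior $1$ beyond the single one already accounted for — the running ``excess'' of interior $1$'s over $\dpt(\textbf{n})$ stays at most $1$. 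Pasting the triangles back, as in Lemma~\ref{lem: cont}, yields $\dpt(\textbf{n}') \leq \dpt(\textbf{n}) + 1$.

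The main obstacle I anticipate is the second step: making rigorous the claim that contiguity of the flip sequence keeps the interior-$1$ excess bounded by $1$, because $\dpt$ is not monotone under individual flips and the ``interior'' designation of a vertex depends on the ambient polygon, which changes as triangles are peeled. I would handle this by carefully setting up the induction on $t$ with the stronger inductive hypothesis that, at each intermediate triangulation $\D(\textbf{n}_j)$, the set of interior $1$'s of $\textbf{n}_j$ consists of the interior $1$'s of $\textbf{n}$ (minus possibly one that became exterior or got absorbed) together with at most one additional vertex, namely the apex of the current ``staircase'' of peeled triangles $\tau_1, \ldots, \tau_j$; contiguity is exactly what guarantees the new triangle $\tau_{j+1}$ attaches to this staircase rather than starting a fresh one. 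As a sanity check, this recovers Lemma~\ref{lem: cont}'s $(1)\Rightarrow(2)$ in the special case $\textbf{n} = \textbf{u}_k$, where $\dpt(\textbf{u}_k) = 0$ and the conclusion becomes $\dpt(\textbf{n}') \leq 1$.
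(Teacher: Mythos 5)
Your proposal follows essentially the same route as the paper's proof: uniqueness forces contiguity of the flip sequence by exactly the order-swapping argument of $(3)\Rightarrow(1)$ in Lemma~\ref{lem: cont}, and then one tracks the interior $1$'s along the contiguous sequence, the paper simply asserting that $\dpt(\textbf{n}_j)=\dpt(\textbf{n}_1)$ for $2\le j\le t$, so only the first flip can raise the depth. Your bookkeeping step, once made precise, amounts to the same observation, and the worry about the ``interior'' designation changing is moot here since all flips take place in the fixed polygon $\mathcal{P}_{k+1}$ (no peeling is needed for this lemma).
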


\begin{proof}  Let  $\textbf{n}, \textbf{n}' \in \mathcal{Z}_k$ and suppose that there exists a unique path $$\D(\textbf{n})=\D(\textbf{n}_0) \xrightarrow[\text{flip}]{\text{$d_{i_1}$}} \D(\textbf{n}_1) \xrightarrow[\text{flip}]{\text{$d_{i_2}$}} \D(\textbf{n}_2) \rightarrow \cdots \rightarrow \D(\textbf{n}_{t-1})  \xrightarrow[\text{flip}]{\text{$d_{i_{t}}$}} \D(\textbf{n}_{t})=\D(\textbf{n}')$$ in $\mathcal{G}_k$. Then this path is contiguous by the same argument given in the proof of $(3) \Rightarrow (1)$ in Lemma~\ref{lem: cont}. Note that by the contiguity of the above path, we have   $\dpt (\textbf{n}_j) = \dpt (\textbf{n}_1)$, for all $2 \leq j \leq t$. It follows that $\dpt (\textbf{n}') = \dpt (\textbf{n})+1$ if $\dpt (\textbf{n}_1) = \dpt (\textbf{n}_0)+1$, and otherwise $\dpt (\textbf{n}') = \dpt (\textbf{n})$.
\end{proof}

\begin{Def} For any integer $s\geq 1$,  let $\mathcal{O}^s$ denote the set of all $s$-tuples that can be obtained from the $1$-tuple $(4)$ and applying the following iterations successively: \begin{enumerate}[label=(\alph*)]
\item {Insert $2$ as the first component and increase the last component by $1$, or}
\item {Insert $2$ as the last component  and increase the first component by $1$.} \end{enumerate} We set $\mathcal{O} = \bigcup_{s \geq 1} \mathcal{O}^s$.
\end{Def}

\begin{Lem} \label{lem: plumbing} Fix an integer $k \geq 3$ and let  $\textbf{n} \in \mathcal{Z}_k$ so that $\hgt  (\textbf{n} ) = k-2$ and $\dpt  (\textbf{n} ) = 1$.  Suppose that
$$\D(\textbf{u}_k)=\D(\textbf{n}_0) \xrightarrow[\text{flip}]{\text{$d_{i_1}$}} \D(\textbf{n}_1) \xrightarrow[\text{flip}]{\text{$d_{i_2}$}} \D(\textbf{n}_2) \rightarrow \cdots \rightarrow \D(\textbf{n}_{k-3})  \xrightarrow[\text{flip}]{\text{$d_{i_{k-2}}$}} \D(\textbf{n}_{k-2})=\D(\textbf{n})$$ is the unique contiguous path in $\mathcal{G}_k$ (as described in Lemma~\ref{lem: cont}), where $d_{i_1}, \ldots, d_{i_{k-2}}$ is a contiguous  sequence of distinguished diagonals in $\D(\textbf{u}_k)$. Let $\textbf{m}$ denote the $k$-tuple having $1$ in the position that $\textbf{n}$ has a $1$ and $0$ elsewhere, and let $1\leq q < p $ be the unique coprime integers such that $\hj(\textstyle{\frac{p}{p-q}})=\textbf{n}+ \textbf{m}$. Then $\hj(\textstyle{\frac{p}{q}})$ belongs to $\mathcal{O}^{k-2}$ and moreover, it can be described by starting from the $1$-tuple $(4)$ and applying $k-3$ iterations according to the following rule:  \begin{enumerate}[label=(\alph*)]
\item {If $i_j < i_{j+1}$, then insert $2$ as the first component and increase the last component by $1$, and }
\item {If $i_j > i_{j+1}$, then insert $2$ as the last component and increase the first component  by $1$.}
\end{enumerate} \end{Lem}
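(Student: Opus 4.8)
The plan is to induct on $k$. For $k=3$ the unique contiguous path consists of the single flip $d_1$, giving $\textbf{n}=(2,1,2)$, so $\textbf{n}+\textbf{m}=(2,2,2)=\hj(4/3)$, hence $(p,q)=(4,1)$ and $\hj(p/q)=(4)\in\mathcal{O}^1$, which is $(4)$ after the required $k-3=0$ iterations.

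\textbf{Combinatorics of the flip sequence.} For the inductive step I first record that, since a distinguished diagonal once flipped is replaced by a non-distinguished one and can never reappear under distinguished-diagonal flips, the path of length $\hgt(\textbf{n})=k-2$ flips each of the $k-2$ distinguished diagonals exactly once, so $i_1,\ldots,i_{k-2}$ is a permutation of $1,\ldots,k-2$. Moreover, in any $\D(\textbf{n}_j)$ the edges at $V_\star$ are the two boundary edges $V_\star V_1$, $V_\star V_k$ together with the still-unflipped distinguished diagonals $V_\star V_{r+1}$ ($r\notin\{i_1,\ldots,i_j\}$), so two unflipped distinguished diagonals $d_r,d_s$ bound a triangle in $\D(\textbf{n}_j)$ precisely when $r,s$ are consecutive in $\{1,\ldots,k-2\}\setminus\{i_1,\ldots,i_j\}$. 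Feeding this into the contiguity conditions, a short induction on $l$ shows that every $\{i_1,\ldots,i_l\}$ is an interval; in particular $\{i_1,\ldots,i_{k-3}\}$ is an interval of length $k-3$, hence $\{1,\ldots,k-3\}$ or $\{2,\ldots,k-2\}$, so $i_{k-2}\in\{1,k-2\}$.

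\textbf{The geometric reduction.} Assume $i_{k-2}=k-2$; the case $i_{k-2}=1$ is the mirror image under the reflection of $\mathcal{P}_{k+1}$ fixing $V_\star$, which reverses all tuples and their dual continued fractions and interchanges rules (a) and (b), and is handled the same way. Then $d_1,\ldots,d_{k-3}$ are exactly the distinguished diagonals of the subpolygon $\mathcal{P}_k\subset\mathcal{P}_{k+1}$ on $V_\star,V_1,\ldots,V_{k-1}$, and none of the flips $d_{i_1},\ldots,d_{i_{k-3}}$ touches the triangle $V_\star V_{k-1}V_k$ cut off by $d_{k-2}$; hence $\D(\textbf{n}_{k-3})=\D(\widetilde{\textbf n})\sqcup\{V_\star V_{k-1}V_k\}$, where $\widetilde{\textbf n}\in\mathcal{Z}_{k-1}$ is obtained from the initial triangulation of $\mathcal{P}_k$ by flipping $d_{i_1},\ldots,d_{i_{k-3}}$. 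Contiguity restricts to this subpolygon, so by Lemma~\ref{lem: cont} the tuple $\widetilde{\textbf n}$ satisfies the hypotheses of the Lemma with $\hgt(\widetilde{\textbf n})=k-3$ and $\dpt(\widetilde{\textbf n})=1$, and its contiguous flip sequence is $i_1,\ldots,i_{k-3}$ (read in $\mathcal{P}_k$, with the same order relations among the indices). Since all distinguished diagonals of $\mathcal{P}_k$ are flipped, $V_\star$ has degree $2$ in $\D(\widetilde{\textbf n})$, so its unique triangle there is $V_\star V_1 V_{k-1}$; consequently the flip reversing $d_{k-2}$ takes place in the quadrilateral $V_\star V_1 V_{k-1}V_k$, and reading off triangle counts gives $\textbf{n}=(\widetilde n_1+1,\widetilde n_2,\ldots,\widetilde n_{k-1},2)$. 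As the interior $1$ of $\textbf{n}$ and of $\widetilde{\textbf n}$ both sit in the interior position $i_1+1$, it follows that $\textbf{n}+\textbf{m}$ is obtained from $\widetilde{\textbf n}+\widetilde{\textbf m}$ by incrementing the first entry and then appending a $2$.

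\textbf{Passing to the dual and conclusion.} Put $\hj(\widetilde p/(\widetilde p-\widetilde q))=\widetilde{\textbf n}+\widetilde{\textbf m}$, with Riemenschneider dual $\hj(\widetilde p/\widetilde q)=(d_1,\ldots,d_{k-3})$. From the staircase shape of the point diagram (equivalently, from the one-line identities $[b_1+1,b_2,\ldots,b_m]=[b_1,b_2,\ldots,b_m]+1$ and $[2,a_1,\ldots,a_n]=2-1/[a_1,\ldots,a_n]$ together with the definition of the dual) one gets the two dual correspondences: incrementing the first entry of a $b$-tuple corresponds to prepending a $2$ to its dual, and appending a $2$ to a $b$-tuple corresponds to incrementing the last entry of its dual. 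Composing these, the dual of $\hj(p/(p-q))=\textbf{n}+\textbf{m}$ is $(2,d_1,\ldots,d_{k-4},d_{k-3}+1)$; that is, $\hj(p/q)$ is obtained from $\hj(\widetilde p/\widetilde q)$ by rule (a) — prepend $2$, increment the last entry. Since $i_{k-3}<i_{k-2}=k-2$, rule (a) is precisely the iteration attached to the pair $(i_{k-3},i_{k-2})$; combining this with the inductive description of $\hj(\widetilde p/\widetilde q)$ as built from $(4)$ by the $k-4$ iterations attached in order to $(i_1,i_2),\ldots,(i_{k-4},i_{k-3})$, we conclude that $\hj(p/q)$ is built from $(4)$ by the $k-3$ iterations attached in order to $(i_1,i_2),\ldots,(i_{k-3},i_{k-2})$, and in particular lies in $\mathcal{O}^{k-2}$, as claimed. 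The part requiring the most care is the geometric bookkeeping in the inductive step: the interval property of the sets $\{i_1,\ldots,i_l\}$, the splitting of $\D(\textbf{n}_{k-3})$ off the corner triangle at $d_{k-2}$, and the exact effect of the final flip on the tuple (where the degree-$2$ fact about $V_\star$ is what pins the relevant vertex to $V_1$); once these are settled, the passage to $\hj(p/q)$ is a formal consequence of the two dual-pair identities for the point diagram.
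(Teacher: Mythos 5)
Your proposal is correct and follows essentially the same route as the paper's proof: induct on $k$, observe that the contiguity forces the last flipped diagonal to be $d_1$ or $d_{k-2}$, peel off the corner triangle to reduce to the $(k-1)$-gon, record that $\textbf{n}+\textbf{m}$ is the smaller tuple with one end incremented and a $2$ adjoined, and translate this through the two Riemenschneider point-diagram moves into the corresponding iteration (a) or (b) on the dual. The only differences are cosmetic: you justify the permutation/interval structure of $i_1,\ldots,i_{k-2}$ in more detail than the paper, and you treat the case $i_{k-2}=1$ by the reflection symmetry of $\mathcal{P}_{k+1}$ fixing $V_\star$ instead of writing it out as a second case (the paper's Cases A and B), which is a legitimate shortcut.
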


\begin{Rem} \label{rem: int} {\em Because of the assumption  $\hgt  (\textbf{n} ) = \hgt  (\D(\textbf{n} )) = k-2$, the triangulation $\D(\textbf{n})$ is obtained from $\textbf{u}_k$ by flipping all the distinguished diagonals in $\textbf{u}_k$ in some order. It follows that if any component of the $k$-tuple $\textbf{n}$ is equal to $1$, it must be an interior component.  So, the condition $\dpt (\textbf{n})= 1$ is equivalent to the condition that $\textbf{n}$ has exactly one component that is equal to $1$, which is in the interior of $\textbf{n}$.}
\end{Rem}

    \begin{proof}[Proof of  Lemma~\ref{lem: plumbing}.] Let  $\textbf{n} \in \mathcal{Z}_k$ so that $\hgt  (\textbf{n} ) = k-2$ and $\dpt  (\textbf{n} ) = 1$. By  Lemma~\ref{lem: cont}, there is a contiguous path $$\D(\textbf{u}_k)=\D(\textbf{n}_0) \xrightarrow[\text{flip}]{\text{$d_{i_1}$}} \D(\textbf{n}_1) \xrightarrow[\text{flip}]{\text{$d_{i_2}$}} \D(\textbf{n}_2) \rightarrow \cdots \rightarrow \D(\textbf{n}_{k-3})  \xrightarrow[\text{flip}]{\text{$d_{i_{k-2}}$}} \D(\textbf{n}_{k-2})=\D(\textbf{n})$$ in $\mathcal{G}_k$, where $d_{i_1}$, $\ldots$, $d_{i_{k-2}}$ is a contiguous sequence of distinguished diagonals  in  $\D(\textbf{u}_k)$. Since we flip all the distinguished diagonals of $\D(\textbf{u}_k)$ {\em contiguously} to obtain $\D(\textbf{n})$, the last distinguished diagonal that is flipped must be either $d_1$ or $d_{k-2}$.  In Example~\ref{ex: cont}, for instance, the last distinguished diagonal that is flipped is $d_1$ (see Figure~\ref{fig: ex}).

In the following, for ease of notation, we set $\textbf{n}' = \textbf{n}_{k-3}$. Note that $\D(\textbf{n}')$ is  obtained from $\D(\textbf{u}_k)$ by flipping the diagonals $d_{i_1}, \ldots, d_{i_{k-3}}$, in order. Our proof naturally splits into two possible cases.

{\bf Case A:} Suppose that $d_{i_{k-2}}=d_{1}$. In this case, we observe that  $d_{i_{k-2}}=d_{1}$ is geometrically in the {\em leftmost} position. Now, we peel away from  $\D(\textbf{n}')$ the "upper left" triangle whose only interior edge is $d_{1}$ and denote the resulting triangulation of $\mathcal{P}_{k}$ as $\D(\overline{\textbf{n}}')$. Note that $\overline{\textbf{n}}'$ has only one component that is equal to $1$, which is in the interior of $\overline{\textbf{n}}'$, by the first assumption in the lemma. It follows that the $k$-tuple $\textbf{n}$ is obtained from the $(k-1)$-tuple $\overline{\textbf{n}}'$ by increasing the last component of $\overline{\textbf{n}}'$ by $1$ and inserting $2$ at the beginning.

Let $\textbf{m}$ denote the $k$-tuple having $1$ in the position that $\textbf{n}$ has
a $1$ and $0$ elsewhere, and similarly let $\overline{\textbf{m}}'$ denote the $(k-1)$-tuple having $1$ in the position that $\overline{\textbf{n}}'$ has
a $1$ and $0$ elsewhere. Let $1\leq q < p $ be the unique coprime integers such that $\hj(\textstyle{\frac{p}{p-q}})=\textbf{n}+ \textbf{m}$, and similarly let $1 \leq \overline{q}' < \overline{p}'$ be the unique coprime integers such that $\hj(\textstyle{\frac{\overline{p}'}{\overline{p}'-\overline{q}'}})= \overline{\textbf{n}}'+ \overline{\textbf{m}}'$. It follows by the Riemenschneider's point diagram method that $\hj(\textstyle{\frac{p}{q}})$ is obtained from $\hj(\textstyle{\frac{\overline{p}'}{\overline{q}'}})$ by inserting $2$ at the end and increasing the first component by $1$.

{\bf Case B:} Suppose that $d_{i_{k-2}}=d_{k-2}$. In this case, we observe that  $d_{i_{k-2}}=d_{k-2}$ is geometrically in the {\em rightmost} position. Now, we peel away from  $\D(\textbf{n}')$ the "upper right" triangle whose only interior edge is $d_{k-2}$ and denote the resulting triangulation of $\mathcal{P}_{k}$ as $\D(\overline{\textbf{n}}')$. Note that $\overline{\textbf{n}}'$ has only one component that is equal to $1$, which is in the interior of $\overline{\textbf{n}}'$, by the first assumption in the lemma. It follows  that the $k$-tuple $\textbf{n}$ is obtained from the $(k-1)$-tuple $\overline{\textbf{n}}'$ by increasing the first component of $\overline{\textbf{n}}'$ by $1$ and inserting $2$ at the end.

Let $\textbf{m}$ denote the $k$-tuple having $1$ in the position that $\textbf{n}$ has
a $1$ and $0$ elsewhere, and similarly let $\overline{\textbf{m}}'$ denote the $(k-1)$-tuple having $1$ in the position that $\overline{\textbf{n}}'$ has
a $1$ and $0$ elsewhere. Let $1\leq q < p $ be the unique coprime integers such that $\hj(\textstyle{\frac{p}{p-q}})=\textbf{n}+ \textbf{m}$, and similarly let $1 \leq \overline{q}' < \overline{p}'$ be the unique coprime integers such that $\hj(\textstyle{\frac{\overline{p}'}{\overline{p}'-\overline{q}'}})= \overline{\textbf{n}}'+ \overline{\textbf{m}}'$. It follows by the Riemenschneider's point diagram method that $\hj(\textstyle{\frac{p}{q}})$ is obtained from the $\hj(\textstyle{\frac{\overline{p}'}{\overline{q}'}})$ by inserting $2$ at the beginning and increasing the last component by $1$.

The proof will be completed by an easy inductive argument. For the initial step of the induction, consider the case $k=3$. In this case, by flipping the unique distinguished diagonal of the quadrilateral, we obtain $\D(2,1,2)$ from the initial triangulation $\D(1,2,1)$. In this case, $\textbf{n}=(2,1,2)$, $\textbf{n}+ \textbf{m}=(2,2,2)$ and hence $\hj(\textstyle{\frac{p}{q}})=(4)$.  It should be clear that {\bf Case A} provides the inductive step corresponding to iteration $(a)$, whereas {\bf Case B} provides the inductive step corresponding to iteration $(b)$.
\end{proof}

\begin{Ex} {\em In Example~\ref{ex: cont}, we obtained $\D(2,2,2,4,2,1,3,2,5)$  from the initial triangulation $\D(\textbf{u}_9)$ by applying flips along the contiguous sequence  $d_5$, $d_4$, $d_6$, $d_7$, $d_3$, $d_2$, $d_1$ of distinguished diagonals of the initial triangulation of the decagon. If we run the algorithm in  Lemma~\ref{lem: plumbing}, based on the sequence $d_5$, $d_4$, $d_6$, $d_7$, $d_3$, $d_2$, $d_1$  we get $(4)$, $(5,2)$, $(2,5,3)$, $(2,2,5,4)$,  $(3,2,5,4,2)$,  $(4,2,5,4,2,2)$,  $(5,2,5,4,2,2,2)$. Therefore we conclude that, if $\hj(\textstyle{\frac{p}{p-q}}) = (2,2,2,4,2,2,3,2,5)$, then $\hj(\textstyle{\frac{p}{q}})$ must be equal to $(5,2,5,4,2,2,2)$, which can indeed be verified by the Riemenschneider's point diagram method as illustrated in Figure~\ref{fig: point}.

\begin{figure}[ht]\small {\epsfxsize=1.5in
\centerline{\epsfbox{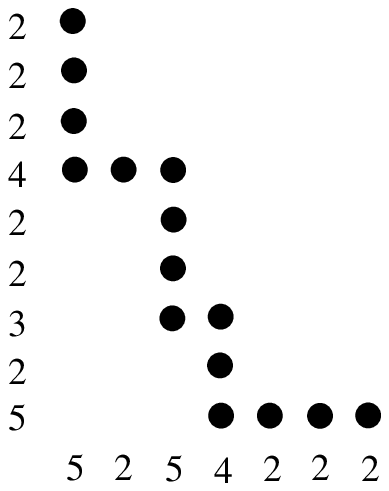}}}
\caption{Riemenschneider's point diagram method.} \label{fig: point} \end{figure}
}
\end{Ex}

\begin{Rem} \label{rem: arb} {\em  In Lemma~\ref{lem: plumbing}, we assumed that  $\textbf{n}$ is of maximal height, and minimum positive depth, i.e.,  $\hgt  (\textbf{n} ) = k-2$ and $\dpt  (\textbf{n} ) = 1$. In fact, we could formulate a similar result if $\textbf{n}$ is of arbitrary positive height, say $1 \leq s \leq k-2$ and minimum positive depth, as follows. The assumptions $\dpt  (\textbf{n} ) = 1$ and $\hgt  (\textbf{n} ) = s$ implies that there is a unique contiguous path of length $s$ from the root vertex $\D(\textbf{u}_k)$ to $\D(\textbf{n})$ in the graph $\mathcal{G}_k$, by Lemma~\ref{lem: cont}. Since the path is contiguous, we can peel away the irrelevant triangles form each of the triangulations in this path, to get a new path of the same length  in  $\mathcal{G}_{s+2}$, which starts from $\D(\textbf{u}_{s+2})$ and ends with a triangulation of maximal possible height and minimum positive depth. Then we apply Lemma~\ref{lem: plumbing} to this contiguous path in $\mathcal{G}_{s+2}$.
}
\end{Rem}

\begin{Ex} {\em Here we give an example to  illustrate Remark~\ref{rem: arb}. Consider the triangulation $\D((2,2,1,4,1))$ of the hexagon in the graph $\mathcal{G}_5$ depicted in Figure~\ref{fig: hexagon1}. The root vertex $\D(\textbf{u}_5)$ is connected to the vertex $\D((2,2,1,4,1))$ by the unique contiguous path
$$\D(\textbf{u}_5) \xrightarrow[\text{flip}]{\text{$d_{2}$}} \D((1,3,1,3,1)) \xrightarrow[\text{flip}]{\text{$d_{1}$}} \D((2,2,1,4,1))$$
of length $2$, obtained by concatenating the edges $e_1$ and $e_{2,1}$. Now, by removing the "top right" triangle from each of the triangulations in this path, we obtain a new path $$\D(\textbf{u}_4) \xrightarrow[\text{flip}]{\text{$\overline{d}_{2}$}} \D((1,3,1,2)) \xrightarrow[\text{flip}]{\text{$\overline{d}_{1}$}} \D((2,2,1,3))$$ of maximal possible length in the graph $\mathcal{G}_4$. Note that $$\dpt((2,2,1,4,1))= \dpt(((2,2,1,3))=1,$$ and hence  Lemma~\ref{lem: plumbing} can be applied to this new path in  $\mathcal{G}_4$. So, if $\hj(\textstyle{\frac{p}{p-q}})=(2,2,2,3)$, then $\hj(\textstyle{\frac{p}{q}}) = (5,2)$ which indeed belongs to $\mathcal{O}^{2}$ and $(5,2)$ is obtained from $(4)$ by the iteration of type $(b)$.
}
\end{Ex}

\section{Contiguous sequences of diagonal flips and rational blowdowns}

In this section, our goal is to prove Theorem~\ref{thm: main}, which essentially organizes the symplectic deformation  equivalence classes of all minimal symplectic  fillings of the contact lens space $(L(p,q), \xi_{can})$ as a graded, directed, rooted, connected graph, where the root is the minimal resolution of the corresponding cyclic quotient singularity and  each directed edge is a symplectic rational blowdown along a linear plumbing graph.

A rational blowdown is the surgery operation which replaces the neighborhood of a configuration of spheres in a smooth $4$-manifold intersecting according to some connected plumbing graph, by a rational homology ball having the same oriented boundary. Each vertex in a plumbing graph represents a disk bundle over the sphere and is decorated by the Euler number of the bundle, which is called the {\em weight} of the vertex.

\begin{Prop}[Wahl \cite{wa}, Looijenga-Wahl \cite{lw}] \label{prop: wahl} A linear plumbing graph can be rationally blown down if and only if the weights of its vertices are exactly given by taking the negatives of the entries in the Hirzebruch-Jung continued fraction expansion of $s^2 / (sh-1)$ for some pair of coprime integers $(s,h)$ with  $1 \leq h < s$. More explicitly, the family of linear plumbing graphs that can be rationally blown down is obtained from the initial graph with one vertex whose weight is $-4$, and applying the following iterations: If  the linear plumbing graph with weights $-a_1,  \ldots, -a_r$ is in this family so are  the linear plumbing graphs with weights
\begin{enumerate}[label=(\Roman*)]
\item {$-2, -a_1,  \ldots, -a_{r-1}, -(a_r +1)$ and }
\item {$-(a_1+1),  -a_2, \ldots,  -a_r,  -2$. }
\end{enumerate}
\end{Prop}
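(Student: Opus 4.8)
This statement has two parts, and my plan is to invoke the literature for the first and give a self-contained continued-fraction argument for the second. The first assertion — that a negative-definite linear plumbing with weights $-a_1,\dots,-a_r$ can be rationally blown down precisely when $[a_1,\dots,a_r]$ is the Hirzebruch--Jung expansion of $s^2/(sh-1)$ for some coprime pair $1\le h<s$ — is exactly the Wahl \cite{wa}, Looijenga--Wahl \cite{lw} classification of the cyclic quotient singularities $\frac{1}{p}(1,q)$ admitting a rational homology disk ($\mathbb{Q}$-Gorenstein) smoothing: such a smoothing exists if and only if $p=s^2$ and $q=sh-1$, and the Milnor fibre of that smoothing is the rational homology ball glued in to perform the blowdown of the minimal resolution graph. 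For this half I would simply cite those two papers.

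The work is in the ``more explicit'' part: that the set $\mathcal{H}=\{\,s^2/(sh-1):\gcd(s,h)=1,\ 1\le h<s\,\}$ coincides with the family of continued fractions generated from $(4)$ by the moves $[a_1,\dots,a_r]\mapsto[2,a_1,\dots,a_{r-1},a_r+1]$ (move (I)) and $[a_1,\dots,a_r]\mapsto[a_1+1,a_2,\dots,a_r,2]$ (move (II)). I would prove this by induction, after translating (I) and (II) into moves on the parameter pair $(s,h)$. Encoding a Hirzebruch--Jung expansion $[a_1,\dots,a_r]=p/q$ by the matrix product $M(a_1)\cdots M(a_r)$, where $M(a)=\left(\begin{smallmatrix}a&-1\\ 1&0\end{smallmatrix}\right)$ and the first column of the product is $(p,q)^{T}$, a direct matrix computation shows that if $[a_1,\dots,a_r]=s^2/(sh-1)$ then
$$[2,a_1,\dots,a_{r-1},a_r+1]=\frac{(2s-h)^2}{(2s-h)s-1}\,,\qquad [a_1+1,a_2,\dots,a_r,2]=\frac{(s+h)^2}{(s+h)h-1}\,,$$
so that (I) realizes $(s,h)\mapsto(2s-h,s)$ and (II) realizes $(s,h)\mapsto(s+h,h)$. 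Along the way one notes that both moves keep every entry $\ge 2$, so the output is again a genuine Hirzebruch--Jung expansion, and that coprimality of the pair together with $1\le h<s$ is preserved; the base case $(s,h)=(2,1)$ gives $s^2/(sh-1)=4/1=[4]$, matching the one-vertex graph of weight $-4$.

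To close the induction I would run these two pair-maps in reverse: given $(s,h)$ coprime with $1\le h<s$ and $(s,h)\neq(2,1)$, either $s>2h$, in which case $(s,h)$ is the image under (II) of $(s-h,h)$, or $s<2h$, in which case it is the image under (I) of $(h,2h-s)$ (the case $s=2h$ forces $(s,h)=(2,1)$ by coprimality); exactly one of these applies, the predecessor has strictly smaller first coordinate, and iterating reaches $(2,1)$ in finitely many steps. Combined with the dictionary above, this shows the recursively generated family is exactly $\mathcal{H}$, which is what we want. The one genuinely computational point — and the step I expect to be the main obstacle — is the displayed continued-fraction identity: the two moves are asymmetric (move (I) prepends a $2$ and increments the \emph{last} entry, while move (II) increments the \emph{first} entry and appends a $2$), and one must check that both produce the asserted reduced fractions $(s')^2/(s'h'-1)$. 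The cleanest verification is the $2\times2$ matrix manipulation above, where prepending or appending $M(2)$ and replacing a generator $M(a)$ by $M(a)+\left(\begin{smallmatrix}1&0\\ 0&0\end{smallmatrix}\right)$ collapses the claim to a short computation; alternatively one can argue through Riemenschneider's point-diagram duality relating $s^2/(sh-1)$ to $s^2/(s^2-sh+1)$.
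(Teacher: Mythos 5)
Your proposal is correct, but it does more than the paper does: in the paper this Proposition is stated purely as a citation of Wahl and Looijenga--Wahl (the explicit recursive description is likewise standard in the rational blowdown literature, cf.\ Fintushel--Stern and Park), and no proof is given. Your treatment of the first half coincides with the paper's (cite the smoothing-theoretic classification: a rational homology disk smoothing of the cyclic quotient singularity exists exactly for $L(s^2,sh-1)$, and its Milnor fibre is the rational ball used in the blowdown), while for the ``more explicit'' half you supply a self-contained argument that the paper omits. That argument is sound: I checked that with $M(a)=\left(\begin{smallmatrix}a&-1\\1&0\end{smallmatrix}\right)$ one indeed has, for $[a_1,\ldots,a_r]=s^2/(sh-1)$, that move (I) produces $(2s-h)^2/\bigl((2s-h)s-1\bigr)$ and move (II) produces $(s+h)^2/\bigl((s+h)h-1\bigr)$, that both moves preserve coprimality, $1\le h'<s'$, and the property that all entries are at least $2$, and that your Euclidean-type descent ($s>2h$ versus $s<2h$, with $s=2h$ forcing $(2,1)$) terminates at $(2,1)$ with strictly decreasing first coordinate, so the generated family is exactly $\{s^2/(sh-1)\}$.

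One small refinement you should build in: tracking only the first column of $M(a_1)\cdots M(a_r)$ is not quite enough to run the matrix computation, because prepending or appending $M(2)$ and the unipotent factors $L=\left(\begin{smallmatrix}1&1\\0&1\end{smallmatrix}\right)$ and $R=\left(\begin{smallmatrix}1&0\\-1&1\end{smallmatrix}\right)$ that implement ``increase the first (resp.\ last) entry by one'' mix the two columns. The clean fix is to strengthen the induction to the full matrix identity
$M(a_1)\cdots M(a_r)=\left(\begin{smallmatrix} s^2 & -(s^2-sh-1)\\ sh-1 & -(sh-1-h^2)\end{smallmatrix}\right)$,
which holds for the base case $M(4)$ with $(s,h)=(2,1)$ and is visibly preserved by $M(2)\,(\cdot)\,R$ and $L\,(\cdot)\,M(2)$; alternatively, determine the second column from $\det=1$ and the congruence $(sh-1)(sh+1)\equiv-1 \pmod{s^2}$. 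With that adjustment the proof is complete, and it would serve as a legitimate substitute for the citation.
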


In the context of $4$-manifolds, the rational blowdowns along linear plumbing graphs, were first used by Fintushel and Stern \cite{fs} for the case $h=1$, and by Park \cite{p} for the general case.
From the singularity theory point of view, each of these linear  plumbing graphs is the dual minimal resolution graph of some cyclic quotient singularity of class $T_0$ (a.k.a. {\em Wahl singularity}), which is a subclass of singularity of class $T$ (see \cite{ksb}).

Moreover, Symington (\cite{sy1, sy2})  established that  the rational blowdown surgery preserves
a symplectic structure if the original spheres are symplectic surfaces in a symplectic $4$-manifold.

Next we recall some definitions which will be used in the proof of Theorem~\ref{thm: main} below. For any $\textbf{n}=(n_1, \ldots, n_k)
\in \mathcal{Z}_{k}$, let $N(\textbf{n})$ denote the result of Dehn
surgery on the framed link which consists of the chain of $k$ unknots in $S^3$ with framings $n_1, n_2,
\ldots, n_k$, respectively. It follows easily that the $3$-manifold $N(\textbf{n})$
is diffeomorphic to $S^1 \times S^2$. Let $\textbf{m}= (m_1,\ldots,m_k)\in
\mathbb{Z}^k$, and
$\textbf{L}=\bigcup_{i=1}^{k} L_i$ denote the framed link in
$N(\textbf{n})$, in the complement of the
chain of $k$ unknots, where each $L_i$ consists of $|m_i|$
components as depicted in Figure~\ref{fig: han}, with the components having framings $-1$
if $m_i>0$ and framings $+1$ if $m_i<0$.

\begin{figure}[ht]
  \relabelbox \small {
  \centerline{\epsfbox{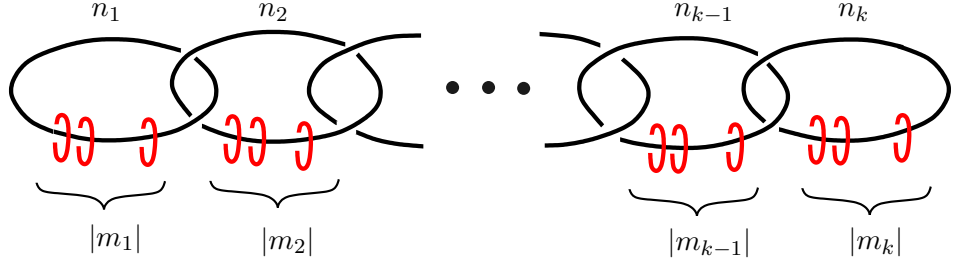}}}
\relabel{a}{$n_1$} \relabel{b}{$n_2$} \relabel{d}{$n_{k-1}$}
\relabel{e}{$n_k$} \relabel{1}{$|m_1|$} \relabel{2}{$|m_2|$} \relabel{3}{$|m_{k-1}|$} \relabel{4}{$|m_k|$}
\endrelabelbox
       \caption{The relative handlebody description of the oriented smooth $4$-manifold $W(\textbf{n},\textbf m)$.}
 \label{fig: han}
\end{figure}

\begin{Def} \label{wnm} For any
$\textbf{n}=(n_1,\ldots,n_k)\in\mathcal{Z}_{k}$, and  $\textbf{m}= (m_1,\ldots,m_k)\in
\mathbb{Z}^k$, the oriented smooth $4$-manifold $W(\textbf{n},\textbf m)$ is obtained by attaching
$2$-handles to $S^1 \times D^3$ along the framed link $\varphi(\textbf{L}) \subset S^1
\times S^2$ for some diffeomorphism $\varphi : N(\textbf{n})\to S^1 \times S^2$.
\end{Def}

Note that this description, which is independent of the choice of $\varphi$ since any self-diffeomorphism
of $S^1 \times S^2$ extends to $S^1 \times D^3$, is a \emph{relative }handlebody decomposition of
 $W(\textbf{n},\textbf m)$.

\begin{Def} \label{def: achiral}  Let $\textbf{b}=(b_1, \ldots, b_k)= \hj(\textstyle{\frac{p}{p-q}})$, where $\frac{p}{p-q}=[b_1, b_2, \ldots, b_k]$. For any $\textbf{n} \in \mathcal{Z}_k$, we set $W_{p,q} (\textbf{n}) =   W (\textbf{n}, \textbf{b-n}).$  \end{Def}

According to Lisca's classification \cite{l}, any minimal symplectic filling of the contact $3$-manifold $(L(p,q), \xi_{can})$ is orientation-preserving diffeomorphic to  $W_{p,q} (\textbf{n}) $ for some $\textbf{n} \in \mathcal{Z}_{k}(\textstyle{\frac{p}{p-q}})$, and the symplectic structure on $W_{p,q} (\textbf{n}) $ is unique up to symplectic deformation equivalence \cite{bono}.

\begin{Rem} \label{rem: rhb} {\em Fix an integer $k \geq 3$ and let  $\textbf{n} \in \mathcal{Z}_k$ such that exactly
one component of $\textbf{n}$  equals to $1$, which is in the interior of $\textbf{n}$.
  Let $\textbf{m}$ denote the $k$-tuple having $1$ in the position that $\textbf{n}$ has a $1$ and $0$ elsewhere, and let $1\leq q < p $ be the unique coprime integers such that $\hj(\textstyle{\frac{p}{p-q}})=\textbf{n}+ \textbf{m}$. We proved in \cite{bo}  that  the minimal symplectic filling $W_{p,q}(\textbf{n})$ of $(L(p,q),\xi_{can})$  is a rational homology $4$-ball and thus $W_{p,q}(\textbf{n})$ can be obtained from the canonical symplectic filling $W_{p,q}(\textbf{u}_k)$ by a single symplectic rational blowdown along a linear plumbing graph. Moreover, the weights of the linear plumbing graph are given by the negatives of the components of $\hj(\textstyle{\frac{p}{q}})$.} \end{Rem}

We are now ready to give a proof of  Theorem~\ref{thm: main}.

\smallskip

\begin{proof}[Proof of Theorem~\ref{thm: main}.]  Suppose that $p > q \geq 1$ are coprime integers and let
$\frac{p}{p-q}=[b_1, b_2, \ldots, b_k]$ be the Hirzebruch-Jung continued fraction expansion, where $b_i \geq 2$ for $1 \leq i \leq
k$.

We will construct a graded, directed, rooted, connected graph $\mathcal{G}^{p,q}_k$ satisfying items (1) to (4) in Theorem~\ref{thm: main} using the graded, directed, rooted,  connected graph $\mathcal{G}_k$. Note that the root of  $\mathcal{G}_k$ is the initial triangulation $\D_\star = \D (\textbf{u}_k)$, and $W_{p,q} (\textbf{u}_k)$ is the minimal resolution, which is the canonical symplectic filling of $(L(p,q), \xi_{can})$.  We take $\D_\star$ as the root of  $\mathcal{G}^{p,q}_k$. Each vertex of $\mathcal{G}_k$ can be identified with an element of $T(\mathcal{P}_{k+1})$ and Stevens' bijection $\Phi_k$ identifies $T(\mathcal{P}_{k+1})$ with the set $\mathcal{Z}_k$. Moreover, the  symplectic deformation  classes of all minimal symplectic fillings of the contact $3$-manifold $(L(p,q), \xi_{can})$ is parametrized by the subset $\mathcal{Z}_{k}(\textstyle{\frac{p}{p-q}})$ of $\mathcal{Z}_k$ or, equivalently, by the subset $ \mathcal{T}^{p,q} (\mathcal{P}_{k+1})$ of $\mathcal{T} (\mathcal{P}_{k+1})$.  So, to obtain the vertices of $\mathcal{G}^{p,q}_k$, we just take the vertices of $\mathcal{G}_k$ which belong to $ \mathcal{T}^{p,q} (\mathcal{P}_{k+1})$ and ``skip" the others. So far, our graph $\mathcal{G}^{p,q}_k$ satisfies items (1) and (2) in Theorem~\ref{thm: main}.

We now turn our attention to item (3). Each edge in the graph $\mathcal{G}^{p,q}_k$ is obtained by the concatenation of some edges in $\mathcal{G}_k$ according to the following principle:

{\em Suppose that  $\D (\textbf{n})$ and $\D (\textbf{n}')$ are two vertices in the graph $\mathcal{G}^{p,q}_k$.  A path of  directed edges in $\mathcal{G}_k$ from  $\D (\textbf{n})$ to $\D (\textbf{n}')$ are concatenated into a single directed edge in $\mathcal{G}^{p,q}_k$  if and only if  there is a unique path from $\D (\textbf{n})$ to $\D (\textbf{n}')$ in $\mathcal{G}_k$. }

We check that with this convention, if there is a directed edge from $\D (\textbf{n})$ to $\D (\textbf{n}')$ in $\mathcal{G}^{p,q}_k$, then the
minimal symplectic filling $W_{p,q} (\textbf{n}')$ can be obtained from the minimal symplectic filling $W_{p,q} (\textbf{n})$ by a single rational
blowdown along a linear plumbing graph. So suppose that $\D (\textbf{n})$ is connected to $\D (\textbf{n}')$ by a unique
path in $\mathcal{G}_k$. Let $\textbf{n}=\textbf{n}_0,\textbf{n}_1,\ldots,\textbf{n}_r=\textbf{n}'$ be the
sequence of $k$-tuples corresponding to the vertices in this path, and let $d_{i_1},d_{i_2},\ldots,d_{i_r}$ be the corresponding sequence of distinguished diagonals of $\mathcal{P}_{k+1}$ that are flipped along the edges, as illustrated below:

 $$\D(\textbf{n})=\D(\textbf{n}_0) \xrightarrow[\text{flip}]{\text{$d_{i_1}$}} \D(\textbf{n}_1) \xrightarrow[\text{flip}]{\text{$d_{i_2}$}} \D(\textbf{n}_2) \rightarrow \cdots \rightarrow \D(\textbf{n}_{r-1})  \xrightarrow[\text{flip}]{\text{$d_{i_r}$}} \D(\textbf{n}_r)=\D(\textbf{n}').$$

We claim that $d_{i_1}, d_{i_2}, \ldots, d_{i_r}$ is a {\em contiguous} sequence of distinguished diagonals in the triangulation $\D(\textbf{n})$. To see this,  note that each pair of successive distinguished diagonals $d_{i_j},d_{i_{j+1}}$ that are flipped must be adjacent in the sense each such pair of diagonals must bound a triangle in $\D (\textbf{n}_{j-1})$. Indeed if $d_{i_j}$ and $d_{i_{j+1}}$ were not adjacent then
we could find an alternate path from $\D (\textbf{n})$ to $\D (\textbf{n}')$ in $\mathcal{G}_k$ by interchanging the order in which we
flip $d_{i_j}$ and $d_{i_{j+1}}$, contradicting the uniqueness of the path between $\D (\textbf{n})$ to $\D (\textbf{n}')$ in $\mathcal{G}_k$. Let
$\mathcal{K}_r =\{d_{i_1}, d_{i_2}, \ldots, d_{i_r}\}$ and peel away all triangles from $\D(\textbf{n})$ that do not have an edge in the set $\mathcal{K}_r$. This
will transform the polygon $\mathcal{P}_{k+1}$ into an $(r+3)$-gon  $\mathcal{P}_{r+3}$.

Moreover,  the triangulation $\D (\textbf{n})$ of $\mathcal{P}_{k+1}$
will become the initial triangulation $\D (\textbf{u}_{r+2})$ of $\mathcal{P}_{r+3}$ and the contiguous sequence $d_{i_1}, d_{i_2}, \ldots, d_{i_r}$ of distinguished diagonals in $\D(\textbf{n})$ will become
a contiguous sequence  of distinguished diagonals of $\D (\textbf{u}_{r+2})$. Let $\textbf{u}_{r+2}=\overline{\textbf{n}}_0,\overline{\textbf{n}}_1,\ldots,\overline{\textbf{n}}_r$ be the sequence of $(r+2)$-tuples corresponding to the sequence of
triangulations of $\mathcal{P}_{r+3}$ obtained from the triangulations $\D(\textbf{n}_0),\D(\textbf{n}_1),\ldots,\D(\textbf{n}_r)$ of $\mathcal{P}_{k+1}$ by peeling away the same set of triangles as above. By Lemma~\ref{lem: plumbing}, it follows that $(r+2)$-tuple $\overline{\textbf{n}}_r$ will have exactly one component that is equal to $1$, which is in the interior of $\overline{\textbf{n}}_r$. We illustrated this step of the proof in Example~\ref{ex: path} below.

Let $\overline{\textbf{m}}_r$ be the $(r+2)$-tuple having $1$ in the position that $\overline{\textbf{n}}_r$ has
a $1$ and $0$ elsewhere, and let $\overline{\textbf{b}}_r=\overline{\textbf{m}}_r + \overline{\textbf{n}}_r$. Let $p'$ and $q'$ be the coprime integers with $1 \leq q' < p'$ such that $\hj(\textstyle{\frac{p'}{p'-q'}})=\overline{\textbf{b}}_r$. According to Remark~\ref{rem: rhb}, the minimal symplectic filling $W(\overline{\textbf{n}}_r,\overline{\textbf{m}}_r) = W_{p',q'}(\overline{\textbf{n}}_r)$ of $(L(p',q'), \xi_{can})$ is a rational homology ball and moreover, it is obtained from  the canonical symplectic filling $W(\textbf{u}_{r+2},\overline{\textbf{b}}_r -\textbf{u}_{r+2}) = W_{p',q'}(\textbf{u}_{r+2})$ of $(L(p', q'), \xi_{can})$  by a single rational blowdown  along a linear plumbing graph. Moreover, the weights of the linear plumbing graph are given by the negatives of  the components in $\hj(\textstyle{\frac{p'}{q'}})$.

Since the triangulation $\D(\textbf{n}_i)$ of $\mathcal{P}_{k+1}$ is obtained from the triangulation $\D(\overline{\textbf{n}}_i)$ of $\mathcal{P}_{r+3}$ by pasting on the collection of triangles we peeled away in the first place, it follows that the fibre of the planar Lefschetz fibration on $W(\overline{\textbf{n}}_i,\overline{\textbf{m}}_i)$, where $\overline{\textbf{m}}_i=\overline{\textbf{b}}_r-
\overline{\textbf{n}}_i$,  is canonically
embedded in the fibre of the planar Lefschetz fibration on $W_{p,q} (\textbf{n}_i) = W(\textbf{n}_i,\textbf{b}-\textbf{n}_i)$. Moreover, the monodromy of
 the planar Lefschetz fibration on $W(\overline{\textbf{n}}_i,\overline{\textbf{m}}_i)$ is contained as a subword in the monodromy of the planar Lefschetz fibration on $W_{p,q} (\textbf{n}_i)$. It follows
 that the minimal symplectic filling $W_{p,q} (\textbf{n}')= W_{p,q} (\textbf{n}_r)= W(\textbf{n}_r,\textbf{b}-\textbf{n}_r)$ can be obtained from the
minimal symplectic filling $W_{p,q} (\textbf{n})= W_{p,q} (\textbf{n}_0) = W(\textbf{n}_0,\textbf{b}-\textbf{n}_0)$ by a single rational blowdown along a linear plumbing graph as claimed.

Finally, to prove the claim in item (4), rather than computing the second Betti number of the minimal symplectic filling $W_{p,q} (\textbf{n})$ directly, we compute instead  the Milnor number of the Milnor fibre,  which corresponds to the same parameter $\textbf{n}$. Denoting the Milnor fibre as $W_{p,q} (\textbf{n})$, by a slight abuse of notation, we recall  the simple formula for the Milnor number $$\mu (W_{p,q} (\textbf{n})) = r+2(k-1)-|\textbf{n}|$$ of  $W_{p,q} (\textbf{n})$, where $r$  is the length of the   Hirzebruch-Jung continued fraction expansion of $\textstyle{\frac{p}{q}}$ and $|\textbf{n}|= n_1+ \cdots + n_k.$  For the formula of the Milnor number, we refer the reader to \cite[Theorem 7.7]{n} and references therein. Since we fix the pair $(p,q)$ from the beginning of the proof, $r$ and $k$ are fixed and  hence $\mu (W_{p,q} (\textbf{n}))$ only depends on $|\textbf{n}|$. But it is easy to see (as in the proof of Lemma~\ref{lem: hgt}) that  $|\textbf{n}|$ increases by one after applying a diagonal flip along a distinguished diagonal, and thus $\mu (W_{p,q} (\textbf{n}))$ drops by one.  Therefore, if the grading of each vertex of the graph $\mathcal{G}^{p,q}_k$ is defined as the second Betti number of the corresponding minimal symplectic filling, then each directed edge drops the grading by the number of diagonal flips used to obtain that edge as described above.  We also note that $\mu(W_{p,q} (\textbf{n})) = r - \hgt(\textbf{n})$, where $\hgt$ is the height function described in Definition~\ref{def: height}. See Remark~\ref{rem: alt}, for an alternative direct proof of item (4).  \end{proof}

\begin{Ex} \label{ex: path} {\em We illustrate a crucial step in the proof of Theorem~\ref{thm: main} by the following example. Consider the path
$$\D(\textbf{n}) = \D((1,2,2,4,2,1,3,3,1)) \xrightarrow[\text{flip}]{\text{$d_7$}}  \D(\textbf{n}_1) = \D((1,2,2,5,2,1,3,2,2)) \xrightarrow[\text{flip}]{\text{$d_{3}$}} $$ $$\D(\textbf{n}_2) = \D((1,2,3,4,2,1,3,2,3)) \xrightarrow[\text{flip}]{\text{$d_{2}$}} \D(\textbf{n}_3) = \D((1,3,2,4,2,1,3,2,4))$$ of triangulations of the decagon  as depicted at the top row in Figure~\ref{fig: path}. In this example, the set
$\mathcal{K}_3$ mentioned in the proof of Theorem~\ref{thm: main}, consists of the {\em contiguous} sequence $d_{7}, d_{3}, d_{2}$ of distinguished diagonals
in the triangulation $\D(\textbf{n})$. By peeling away  all triangles from $\D(\textbf{n})$ that do not have an edge in the set $\mathcal{K}_3$, we obtain the initial triangulation $\D(\textbf{u}_5)$ of the hexagon, as depicted at the beginning of the bottom row in Figure~\ref{fig: path}.

\begin{figure}[ht]\small {\epsfxsize=6in
\centerline{\epsfbox{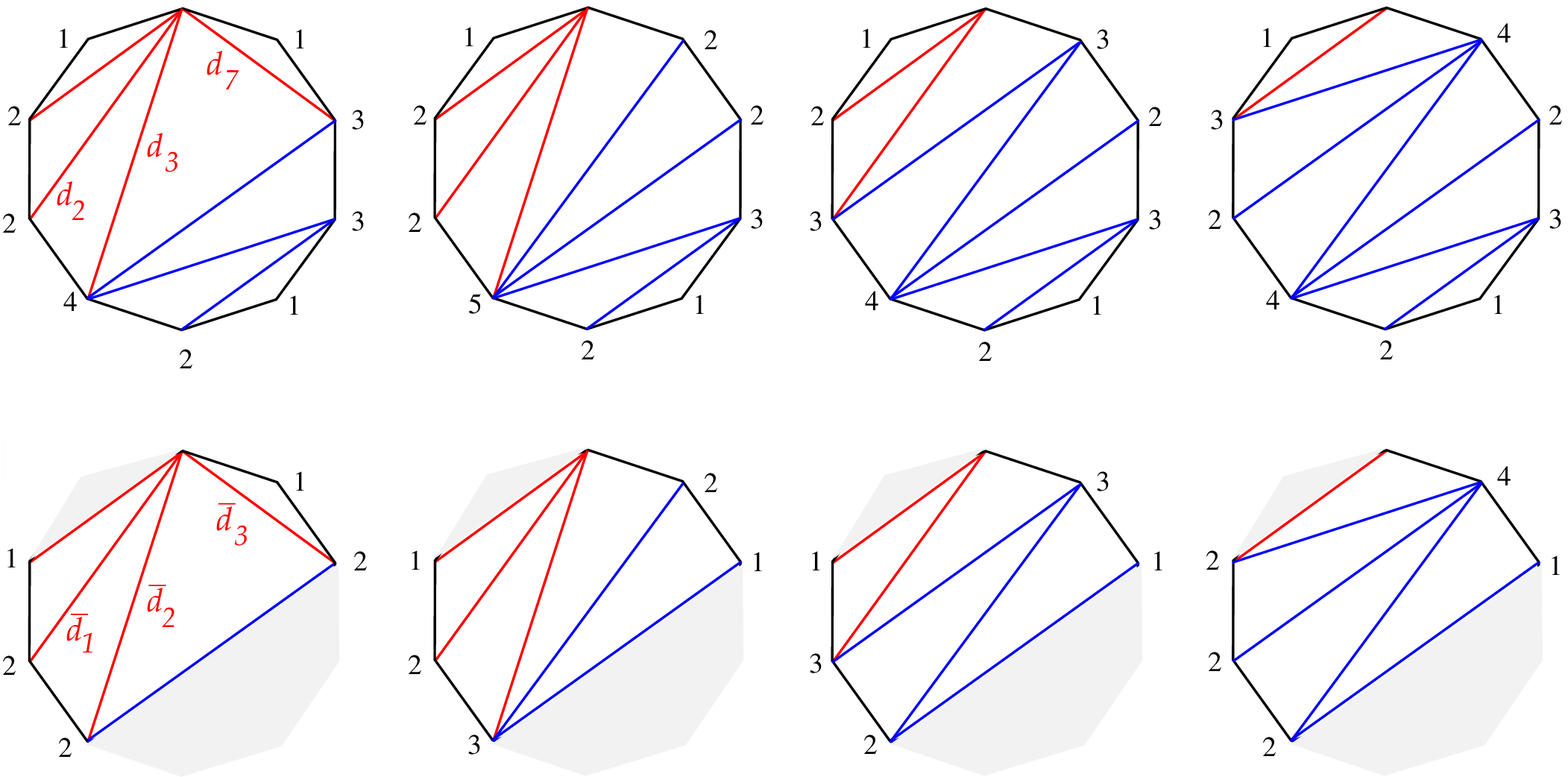}}}
\caption{Top row: $\D(\textbf{n})$, $\D(\textbf{n}_1)$, $\D(\textbf{n}_2)$, $\D(\textbf{n}_3)$. Bottom row: $\D(\overline{\textbf{n}})$, $\D(\overline{\textbf{n}}_1)$, $\D(\overline{\textbf{n}}_2)$, $\D(\overline{\textbf{n}}_3)$. } \label{fig: path} \end{figure}

Moreover, by peeling away the same triangles form each of the triangulations $\D(\textbf{n})$, $\D(\textbf{n}_1)$, $\D(\textbf{n}_2)$, $\D(\textbf{n}_3)$ of the decagon depicted at the top row, we obtain the path of triangulations $$\D(\overline{\textbf{n}})=\D(\textbf{u}_{5})  \xrightarrow[\text{flip}]{\text{$\overline{d}_3$}} \D(\overline{\textbf{n}}_1)  = \D((1,2,3,1,2)) \xrightarrow[\text{flip}]{\text{$\overline{d}_2$}} $$ $$  \D(\overline{\textbf{n}}_2)=\D((1,3,2,1,3)) \xrightarrow[\text{flip}]{\text{$\overline{d}_1$}} \overline{\textbf{n}}_3 =\D((2,2,2,1,4))$$ of the hexagon depicted in the bottom row in Figure~\ref{fig: path}. Here, we used overline for the distinguished diagonals of the hexagon to set them apart from the distinguished diagonals of the decagon. It should be clear that the contiguous sequence $d_{7}, d_{3}, d_{2}$ of distinguished diagonals  of $\D(\textbf{n})$ can be identified with the  the contiguous sequence $\overline{d}_3,\overline{d}_2, \overline{d}_1$ of distinguished diagonals  of $\D(\overline{\textbf{n}})=\D(\textbf{u}_{5})$. Note that $\overline{\textbf{n}}_3$ has exactly one component that is equal to $1$, which is in the interior of $\overline{\textbf{n}}_3$. We emphasize that the interior  $1$ in $\overline{\textbf{n}}_3$ is obtained by the diagonal flip along the distinguished diagonal $\overline{d}_3$ of the hexagon, which is applied first in the sequence of diagonal flips in the bottom row of triangulations in Figure~\ref{fig: path}.}
\end{Ex}

\begin{Prop}\label{prop: lpg} Suppose that there is a directed edge from some vertex $\D(\textbf{n})$ to another vertex $\D(\textbf{n}')$ in  $\mathcal{G}^{p, q}_k$ which is obtained by the unique contiguous path in  $\mathcal{G}_k$ corresponding to the sequence of diagonal flips along the distinguished diagonals $d_{i_1}, \ldots, d_{i_r}$. Then the linear plumbing graph for the rational blowdown represented by this edge is obtained by starting from the initial graph with one vertex whose weight is $-4$ and applying  the iterations in Proposition~\ref{prop: wahl} according to the following rule:  If $i_j < i_{j+1}$, then apply iteration $(I)$, otherwise apply iteration $(II)$. \end{Prop}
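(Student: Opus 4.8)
The plan is to prove Proposition~\ref{prop: lpg} by reducing it directly to Lemma~\ref{lem: plumbing} via the ``peeling off triangles'' operation already exploited in the proof of Theorem~\ref{thm: main}. Recall from that proof that, given the unique contiguous path $\D(\textbf{n}) \to \cdots \to \D(\textbf{n}')$ in $\mathcal{G}_k$ along the distinguished diagonals $d_{i_1}, \ldots, d_{i_r}$, we set $\mathcal{K}_r = \{d_{i_1}, \ldots, d_{i_r}\}$ and peel away from $\D(\textbf{n})$ all triangles having no edge in $\mathcal{K}_r$; this transforms $\mathcal{P}_{k+1}$ into $\mathcal{P}_{r+3}$, carries $\D(\textbf{n})$ to the initial triangulation $\D(\textbf{u}_{r+2})$, and carries the path to a contiguous path $\D(\textbf{u}_{r+2}) = \D(\overline{\textbf n}_0) \to \cdots \to \D(\overline{\textbf n}_r)$ in $\mathcal{G}_{r+3}$ that flips \emph{all} $r$ distinguished diagonals of $\D(\textbf{u}_{r+2})$, in the same relative order. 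The key point is that the linear plumbing graph for the rational blowdown represented by the edge from $\D(\textbf{n})$ to $\D(\textbf{n}')$ is, as established in the proof of Theorem~\ref{thm: main} (using Remark~\ref{rem: rhb}), exactly the one whose weights are the negatives of the components of $\hj(\tfrac{p'}{q'})$, where $\hj(\tfrac{p'}{p'-q'}) = \overline{\textbf n}_r + \overline{\textbf m}_r$ — and this is precisely the plumbing graph governed by Lemma~\ref{lem: plumbing} applied to the contiguous path in $\mathcal{G}_{r+3}$.

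So the core of the argument is the following observation: Lemma~\ref{lem: plumbing} takes a maximal-height, depth-one $\textbf{n} \in \mathcal{Z}_{r+2}$ realized by a unique contiguous path flipping $d_{i_1}, \ldots, d_{i_r}$ (here the reindexed diagonals of $\mathcal{P}_{r+3}$), and asserts that $\hj(\tfrac{p'}{q'})$ is built from the $1$-tuple $(4)$ by $r-1$ iterations, where iteration $(a)$ (insert $2$ at the front, increment the last component) is applied when $i_j < i_{j+1}$ and iteration $(b)$ (insert $2$ at the end, increment the first component) when $i_j > i_{j+1}$. On the other side, Proposition~\ref{prop: wahl} describes the family of rationally-blowdownable linear plumbing graphs as those built from the single-vertex graph of weight $-4$ by iterations $(I)$ (prepend $-2$, decrement — i.e.\ make more negative by one — the last weight) and $(II)$ (decrement the first weight, append $-2$). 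These two inductive descriptions are \emph{identical} under negation: iteration $(a)$ on tuples corresponds to iteration $(I)$ on plumbing weights, and $(b)$ corresponds to $(II)$. Therefore, taking negatives of the components of the tuple produced by the rule in Lemma~\ref{lem: plumbing} produces exactly the plumbing graph produced by the rule in Proposition~\ref{prop: wahl} with the same case distinction $i_j < i_{j+1} \leftrightarrow (I)$, $i_j > i_{j+1} \leftrightarrow (II)$. This is the content of Proposition~\ref{prop: lpg}.

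Concretely I would proceed as follows. First, invoke the construction from the proof of Theorem~\ref{thm: main} to obtain the peeled contiguous path in $\mathcal{G}_{r+3}$ and its endpoint $\D(\overline{\textbf n}_r)$, noting that $\hgt(\overline{\textbf n}_r) = r = (r+2)-2$ and $\dpt(\overline{\textbf n}_r) = 1$ by Remark~\ref{rem: int}, so that the hypotheses of Lemma~\ref{lem: plumbing} are met with $k$ replaced by $r+2$. Second, recall (again from the proof of Theorem~\ref{thm: main}) that the plumbing graph for the edge $\D(\textbf{n}) \to \D(\textbf{n}')$ has weights $-\hj(\tfrac{p'}{q'})$ where $\hj(\tfrac{p'}{p'-q'}) = \overline{\textbf n}_r + \overline{\textbf m}_r$; this is the ``same'' $p', q'$ that Lemma~\ref{lem: plumbing} computes. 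Third, apply Lemma~\ref{lem: plumbing} to read off $\hj(\tfrac{p'}{q'})$ as the output of the $(a)$/$(b)$ iteration rule starting from $(4)$. Fourth, negate componentwise and match the $(a)/(b)$ iterations with the $(I)/(II)$ iterations of Proposition~\ref{prop: wahl}: both start from $\pm 4$, and inserting a $2$ at the front while incrementing the last tuple-component is, under negation, prepending a $-2$ while making the last weight one more negative — i.e.\ exactly iteration $(I)$; symmetrically for $(b)$ and $(II)$. Since the two case distinctions agree verbatim ($i_j < i_{j+1}$ in both), the proposition follows.

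The argument is essentially a bookkeeping identification of two inductive constructions, so there is no deep obstacle; the only care needed is to make the ``peeling'' reindexing precise — i.e.\ to confirm that the relative order of the flipped diagonals (and hence the sequence of strict inequalities $i_j \lessgtr i_{j+1}$) is preserved when we pass from the diagonals $d_{i_1}, \ldots, d_{i_r}$ of $\mathcal{P}_{k+1}$ to the corresponding distinguished diagonals of the peeled polygon $\mathcal{P}_{r+3}$. This is because peeling removes triangles without an edge in $\mathcal{K}_r$, so it collapses the polygon without reordering the surviving distinguished diagonals around $V_\star$; this was already used implicitly in the proof of Theorem~\ref{thm: main}, and I would simply spell it out. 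The remaining step — that the two iteration families of Proposition~\ref{prop: wahl} and Lemma~\ref{lem: plumbing} coincide under negation with the stated correspondence of cases — is immediate from comparing the two definitions, and can be checked on the base case $(4) \leftrightarrow -4$ plus one generic iteration.
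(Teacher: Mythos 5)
Your proposal is correct and follows essentially the same route as the paper: the paper's proof also reduces Proposition~\ref{prop: lpg} to Lemma~\ref{lem: plumbing} via the peeling construction from the proof of Theorem~\ref{thm: main}, identifying the plumbing graph for the edge with the one for the blowdown from $W_{p',q'}(\textbf{u}_{r+2})$ to $W_{p',q'}(\overline{\textbf{n}}_r)$, with the matching of iterations $(a)/(b)$ to $(I)/(II)$ left implicit. Your version merely spells out that matching and the preservation of the relative order of the flipped diagonals under peeling (your citation of Remark~\ref{rem: int} for $\dpt(\overline{\textbf{n}}_r)=1$ would be better replaced by Lemma~\ref{lem: cont}, but this is a cosmetic point).
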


\begin{proof} Using the same argument (and notation) as in the proof of Theorem~\ref{thm: main}, we see that the linear plumbing graph used for the rational blowdown that yields the minimal symplectic filling $W_{p,q} (\textbf{n})$ from the minimal symplectic  filling $W_{p,q} (\textbf{n}')$ is the same as the linear plumbing graph used for the rational blowdown that yields the minimal symplectic filling $W_{p',q'} (\overline{\textbf{n}}_r)$ from the  canonical symplectic filling $W_{p',q'} (\textbf{u}_{r+2})$. Note that the triangulation  $\D(\overline{\textbf{n}}_r)$  of $\mathcal{P}_{r+3}$  is obtained from the initial triangulation $\D(\textbf{u}_{r+2})$ by flipping all the distinguished diagonals in $\D(\textbf{u}_{r+2})$ {\em contiguously.} As a consequence, the proof of Proposition~\ref{prop: lpg} reduces to Lemma~\ref{lem: plumbing}.
\end{proof}

\section{Diagonal flips and lantern substitutions} \label{sec: lantern}

Suppose $p$ and $q$ are coprime integers with $p >q \geq 1$ such that the Hirzebruch-Jung continued fraction expansion of $\textstyle{\frac{p}{p-q}}$
 is equal to $[b_1, \ldots, b_k]$, where $b_i \geq 2$ for all $1 \leq i \leq k$.   In \cite{bo}, we constructed a planar Lefschetz fibration on each minimal symplectic filling of the contact $3$-manifold  $(L(p,q), \xi_{can})$. In particular, there is a Lefschetz fibration on the minimal resolution, whose fibre is the disk $D_k$ with $k$-holes and whose monodromy is the composition of Dehn twists along an explicit set of {\em disjoint} curves in $D_k$. We would like to point out that the planar Lefschetz fibration constructed by Gay and Mark \cite{gm} on the minimal resolution using its dual plumbing graph agrees with ours.

Moreover, the planar Lefschetz fibration above  naturally induces a {\em planar}  open book $\OB_{p,q}$ on $L(p,q)$ which supports $\xi_{can}$.   It follows by a general result of Wendl \cite{w}, that each  minimal symplectic filling of $(L(p,q), \xi_{can})$ has a planar Lefschetz fibration whose monodromy is a positive factorization of  the monodromy of $\OB_{p,q}$,  although we have not relied on his result in \cite{bo}.

Furthermore, in \cite[Theorem 4.1]{bo}, we showed that each minimal symplectic filling of the contact $3$-manifold  $(L(p,q), \xi_{can})$ can be obtained from the minimal resolution by a sequence of rational blowdowns along linear plumbing graphs. We observe here that the proof of Lemma 4.5 in \cite{bo},  coupled with Lemma~\ref{lem: hgt} of the present paper,  implies in particular that if $\D(\widetilde{\textbf{n}}) \in \mathcal{T} (\mathcal{P}_{k+1}) $  is obtained from $\D(\textbf{n}) \in \mathcal{T} (\mathcal{P}_{k+1})$ by a diagonal flip, then the monodromy of the possibly {\em achiral} planar Lefschetz fibration on $W_{p,q} (\textbf{n})$ can be obtained from the monodromy of the possibly {\em achiral} planar   Lefschetz fibration on $W_{p,q} (\widetilde{\textbf{n}})$    by single lantern substitution together with, possibly, the introduction or removal of some cancelling pairs of Dehn twists.

The reader might be puzzled at this point at why we allow achiral Lefschetz fibrations in this discussion, but the point is that we can go from a positive factorization of some fixed monodromy to another positive factorization by a sequence of lantern substitutions which destroys positivity at the intermediate steps but restores it at the end.

As a matter of fact, the lantern substitution is completely determined by the diagonal flip,  which we discuss below. The following definition is needed in our discussion.

  \begin{Def} \label{def: curves} Let $D_k$ denote the disk with $k$-holes. Suppose that the holes in $D_k$ are aligned horizontally and enumerated from left  to right. For each $1 \leq r \leq k$, let $\g_r$ denote the convex curve enclosing the first $r$ holes and for any $2 \leq s \leq t \leq k$, let $\d_{s,t}$ denote the convex curve enclosing the holes labelled from $s$ to $t$.    \end{Def}

 The diagonal flip along any distinguished diagonal in any given triangulation $\D(\textbf{n})$ of $\mathcal{P}_{k+1}$ transforms $\D(\textbf{n})$   to  another triangulation $\D(\widetilde{\textbf{n}})$ of $\mathcal{P}_{k+1}$,  so that for exactly two indices,  say $i < j$, we have $\widetilde{n}_i - n_i  = \widetilde{n}_j - n_j= 1$ and for one index, say $t$,  where $i < t <j$,  we have $\widetilde{n}_t - n_t =-1$.  This is simply because each diagonal flip is an exchange of a distinguished diagonal with a non-distinguished diagonal of a quadrilateral, one of whose vertices is the distinguished vertex.  The corresponding lantern substitution in the monodromy factorization of the planar Lefschetz fibration on $W_{p,q} (\textbf{n})$, in order to obtain the monodromy factorization of the planar Lefschetz fibration on $W_{p,q} (\widetilde{\textbf{n}})$,  is the replacement of the product of four Dehn twists $$D(\g_i) \circ D(\d_{i+1, t}) \circ D(\d_{t+1, j}) \circ D(\g_j)$$ with the product of three Dehn twists
$$D(\b_{i,t,j}) \circ D(\g_{t}) \circ D(\d_{i+1, j}),$$ up to cyclic permutations, where $\b_{i,t,j}$ is depicted in Figure~\ref{fig: beta}.

\begin{figure}[ht]  \relabelbox \small {\epsfxsize=4in
\centerline{\epsfbox{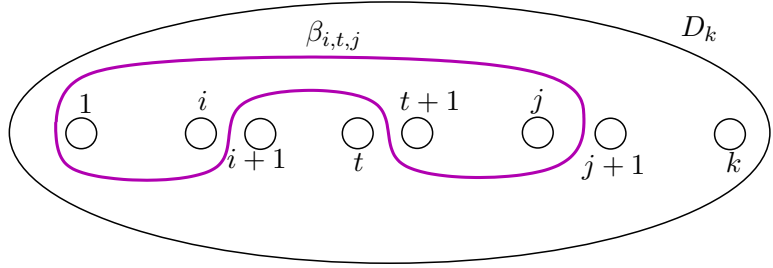}}}
 \relabel{1}{$1$} \relabel{2}{$i$} \relabel{3}{$i+1$} \relabel{4}{$t$} \relabel{5}{$t+1$} \relabel{6}{$j$} \relabel{7}{$j+1$} \relabel{8}{$k$}
 \relabel{9}{$\b_{i,t,j}$} \relabel{a}{$D_k$}
 \endrelabelbox
\caption{The curve $\b_{i,t,j}$ in $D_k$.} \label{fig: beta} \end{figure}

Note that if both $\textbf{n}$ and  $\widetilde{\textbf{n}}$ belong to  $\mathcal{Z}_{k}(\textstyle{\frac{p}{p-q}})$, then $W_{p,q} (\textbf{n})$ and  $W_{p,q} (\widetilde{\textbf{n}})$ both represent minimal symplectic fillings and  there is no need to insert any cancelling pair of Dehn twists to apply the lantern substitution. If $\textbf{n} \in \mathcal{Z}_{k}(\textstyle{\frac{p}{p-q}})$, but $\widetilde{\textbf{n}} \notin \mathcal{Z}_{k}(\textstyle{\frac{p}{p-q}})$, then to apply the lantern substitution, one needs to insert a cancelling pair of Dehn twists along  $\g_i$ if $\widetilde{n}_i > b_i$, and  a cancelling pair of Dehn twists along $\g_j$ if $\widetilde{n}_j > b_j$. In certain cases, both conditions are satisfied and we need to insert two cancelling pairs of Dehn twists. It is also possible that neither  $\textbf{n}$ nor  $\widetilde{\textbf{n}}$ belongs to  $\mathcal{Z}_{k}(\textstyle{\frac{p}{p-q}})$, in which case, one again inserts a cancelling pair of Dehn twists along  $\g_i$ or $\g_j$, or both, with the same criterion as above.

The upshot of this discussion is that, once the coprime pair $(p,q)$ is fixed,  each vertex in the graph $\mathcal{G}_k$ is a certain (not necessarily positive) factorization of the fixed monodromy of the planar open book $\OB_{p,q}$ on $L(p,q)$ which supports $\xi_{can}$. It follows that, each  vertex of $\mathcal{G}_k$ can be identified with a "smooth filling" $W_{p,q} (\textbf{n})$ of $L(p,q)$, for the corresponding $\textbf{n} \in  \mathcal{Z}_{k}$. Moreover, $W_{p,q} (\textbf{n})$ is a minimal symplectic filling of $(L(p,q), \xi_{can})$ if and only if the corresponding factorization is positive, or equivalently, if and only if  $\textbf{n}  \in \mathcal{Z}_{k}(\textstyle{\frac{p}{p-q}})$.

\begin{Rem}\label{rem: alt}  {\em An alternative proof of item $(4)$ in Theorem~\ref{thm: main}, which says that the Milnor number drops by one after each diagonal flip, in the spirit of the present paper, can be given as follows. By construction,  each diagonal flip is a lantern substitution in the monodromy of the corresponding planar Lefschetz fibration. By the work of Endo and Nagami \cite{en}, the signature $\sigma (W_{p,q} (\textbf{n})) = b_2^+ (W_{p,q} (\textbf{n})) -  b_2^- (W_{p,q} (\textbf{n})$ increases by one when a lantern substitution is applied, and hence $\mu(W_{p,q} (\textbf{n})) = b_2^+ (W_{p,q} (\textbf{n})) +  b_2^- (W_{p,q} (\textbf{n}))$ drops by one since $b_2^+ (W_{p,q} (\textbf{n}))$ remains fixed.} \end{Rem}

\section{Rational blowdown depth of a minimal symplectic filling}

Our goal in this section is to prove Proposition~\ref{prop: depth} from the Introduction. Recall that a minimal symplectic filling of $(L(p,q), \xi_{can})$ is said to have  {\em rational blowdown depth}  $r$ if the minimal number of successive symplectic rational blowdowns along linear plumbing graphs needed to obtain the filling from the minimal resolution is equal to $r$, where the depth of the minimal resolution is set to be zero. Moreover, for $k \geq 3$, the depth of  $\textbf{n} = (n_1, \ldots, n_k) \in \mathcal{Z}_{k}$, denoted by $\dpt (\textbf{n})$, is the number of $1$'s in the {\em interior} of $\textbf{n}$, i.e., $\dpt (\textbf{n})$ is the cardinality of the set $\{ i \; |\;  1 < i < k \; \mbox{and} \; n_i=1\}$.

\begin{Lem} \label{lem: same} Suppose $p$ and $q$ are coprime integers with $p >q \geq 1$. Let $\textbf{n}$ be a $k$-tuple in
$\mathcal{Z}_{k}(\textstyle{\frac{p}{p-q}})$ with $\dpt (\textbf{n})=l \geq 1$, so that $\textbf{n}$ has $l$ interior $1$'s enumerated from left to right. Pick any interior $1$, say the $v$th
interior $1$ for some $1\leq v\leq l$ and blow down $\textbf{n}=\textbf{n}^0$ at this $1$, resulting in a $(k-1)$-tuple by $\textbf{n}^1$. If  $\dpt(\textbf{n}^1)=l$, then blow down at the $v$th interior $1$ again to obtain a $(k-2)$-tuple $\textbf{n}^2$. Repeat in this way until the $(k-t)$-tuple $\textbf{n}^t$ satisfies  $\dpt(\textbf{n}^t)=l-1$. If $$ \D(\textbf{u}_k)=\D(\textbf{n}_0),\D(\textbf{n}_1),\ldots,\D(\textbf{n}_t)$$ is the corresponding path in $\mathcal{G}_k$ obtained as in Lemma~\ref{lem: once}, then $\textbf{n}_t$ belongs to $\mathcal{Z}_{k}(\textstyle{\frac{p}{p-q}})$.
  \end{Lem}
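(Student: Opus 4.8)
The plan is to move everything to the triangulation side. Extend the given partial blowdown sequence $\textbf{n}=\textbf{n}^0\to\cdots\to\textbf{n}^t$ arbitrarily to a full one $\textbf{n}^0\to\cdots\to\textbf{n}^t\to\cdots\to\textbf{n}^s=\textbf{u}_{k-s}$ with $s=\hgt(\textbf{n})$ (possible, since any tuple of positive height admits an interior blowdown, as in the proof of Lemma~\ref{lem: height}). By Lemma~\ref{lem: once} this produces the path $\D(\textbf{u}_k)=\D(\textbf{n}_0),\dots,\D(\textbf{n}_s)=\D(\textbf{n})$, together with the associated sequence $d_{i_1},\dots,d_{i_s}$ of distinguished diagonals and peeled ``ear'' triangles $\tau_1,\dots,\tau_s$; the vertex $\D(\textbf{n}_t)$, being obtained from $\D(\textbf{u}_k)$ by flipping $d_{i_1},\dots,d_{i_t}$ in order, depends only on the first $t$ blowdowns, so $\textbf{n}_t$ is well defined. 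If $\dpt(\textbf{n}^t)=0$ then $\textbf{n}^t=\textbf{u}_{k-t}$, which forces $t=s$ and $\textbf{n}_t=\textbf{n}_s=\textbf{n}\in\mathcal{Z}_k(\textstyle{\frac{p}{p-q}})$, and we are done; so I may assume $\dpt(\textbf{n}^t)=l-1\ge 1$ (hence $l\ge 2$, and $\textbf{n}^t$ has length $\ge 3$). Writing $\textbf{b}=\hj(\textstyle{\frac{p}{p-q}})$, since $\textbf{n}\le \textbf{b}$ entrywise it remains only to prove $(\textbf{n}_t)_m\le b_m$ for every $m$.

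First I would read off the geometry of the peeling. Because at every step we blow down \emph{the $v$th} interior $1$, each ear $\tau_{r+1}$ sits at a vertex of $\mathcal{P}_{k+1}$ adjacent to the vertex peeled at step $r$; hence $\tau_1,\dots,\tau_t$ form a chain of triangles (consecutive ones sharing an edge) triangulating a sub-polygon $Q$ spanned by a contiguous block of vertices $V_a,V_{a+1},\dots,V_b$ with $b-a=t+1$, the peeled vertices being exactly $V_{a+1},\dots,V_{b-1}$ and the associated distinguished diagonals being exactly $d_a,d_{a+1},\dots,d_{b-2}$. Localizing the argument of Lemma~\ref{lem: once} to the quadrilaterals inside the region bounded by $V_\star,V_a,V_b$, flipping $d_{i_1},\dots,d_{i_t}$ in $\D(\textbf{u}_k)$ reconstructs this same chain: $\D(\textbf{n}_t)$ agrees with $\D(\textbf{u}_k)$ at every vertex outside $\{V_a,\dots,V_b\}$, equals $1$ at $V_{a+1},\dots,V_{b-1}$, and inside $V_\star V_a V_{a+1}\cdots V_b$ consists of the triangle $V_\star V_a V_b$ together with the very same triangulation $T_Q$ of $Q$ that $\tau_1,\dots,\tau_t$ cut out of $\D(\textbf{n})$ (both are determined by the identical sequence of left/right moves of the chain). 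Writing $q_a,q_b$ for the numbers of triangles of $T_Q$ incident to $V_a,V_b$, this gives $(\textbf{n}_t)_m=(\textbf{u}_k)_m\le 2\le b_m$ for $m<a$ or $m>b$, $(\textbf{n}_t)_m=1\le b_m$ for $a<m<b$, and $(\textbf{n}_t)_a=2+q_a$, $(\textbf{n}_t)_b=2+q_b$, where the summand $2$ drops to $1$ precisely when $a=1$, resp.\ $b=k$ (and, in the case $l\ge 2$, not both of these occur).

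To finish I would exploit the stopping condition. The step $\textbf{n}^{t-1}\to\textbf{n}^t$ is where the depth drops, so $\tau_t=V_a V_c V_b$ for the unique vertex $V_c$ of $Q$ surviving after $t-1$ peels, and blowing it down creates no new interior $1$; since $(\textbf{n}^{t-1})_a,(\textbf{n}^{t-1})_b\ge 2$ (no two adjacent entries of a tuple in $\mathcal{Z}_m$, $m\ge 3$, equal $1$ — a quick consequence of Stevens' bijection), this forces $(\textbf{n}^t)_a\ge 2$ unless $a=1$, and $(\textbf{n}^t)_b\ge 2$ unless $b=k$. On the other hand $V_a V_b$ is a diagonal of $\D(\textbf{n})$ separating $Q$ from the rest, so the triangles of $\D(\textbf{n})$ at $V_a$ split into those of $T_Q$ and those of $\D(\textbf{n}^t)$, giving $(\textbf{n})_a=q_a+(\textbf{n}^t)_a$ and likewise $(\textbf{n})_b=q_b+(\textbf{n}^t)_b$. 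Hence $(\textbf{n}_t)_a=2+q_a\le (\textbf{n}^t)_a+q_a=(\textbf{n})_a\le b_a$ when $a>1$, while $(\textbf{n}_t)_1=1+q_1\le (\textbf{n}^t)_1+q_1=(\textbf{n})_1\le b_1$ when $a=1$ (using $(\textbf{n}^t)_1\ge 1$); symmetrically at $V_b$. Combining the cases, $(\textbf{n}_t)_m\le b_m$ for all $m$, i.e.\ $\textbf{n}_t\in\mathcal{Z}_k(\textstyle{\frac{p}{p-q}})$.

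The step I expect to be the real work is making the two ``chain'' claims precise and uniform: that repeatedly blowing down the $v$th interior $1$ genuinely peels a single connected chain of ears over a contiguous block of vertices, and that the triangulation $T_Q$ it cuts out of $\D(\textbf{n})$ is literally the one appearing inside $\D(\textbf{n}_t)$, so that the counts $q_a,q_b$ coincide. Both are local versions of the proof of Lemma~\ref{lem: once}, but one must also carry out the endpoint bookkeeping in the degenerate cases $a=1$, $b=k$, and $t=1$; these are routine but should be checked explicitly.
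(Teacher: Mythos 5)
Your argument is correct and follows essentially the same route as the paper's own proof: you identify the peeled ears as a contiguous chain over a block $V_a,\dots,V_b$, observe that $\D(\textbf{n}_t)$ consists of that same chain together with the initial triangulation of the complementary sub-polygon, and verify $\textbf{n}_t\le \textbf{b}$ componentwise, with the two endpoint entries at $V_a,V_b$ controlled by the stopping condition $\dpt(\textbf{n}^t)=l-1$ exactly as the paper does. The structural ``chain'' claims you defer as routine are asserted at essentially the same level of detail in the paper, and your extra bookkeeping (the $\dpt(\textbf{n}^t)=0$ case and the $a=1$, $b=k$ endpoints) only makes explicit what the paper treats implicitly.
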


\begin{proof}
Suppose that $\textbf{n}$ is a $k$-tuple in $\mathcal{Z}_{k}(\textstyle{\frac{p}{p-q}})$ such that $\dpt (\textbf{n})=l$, so that $\textbf{n}$ has $l$ interior $1$'s enumerated from left to right. Pick any interior $1$, say the $v$th
interior $1$ for some $1\leq v\leq l$. Blow down $\textbf{n}=\textbf{n}^0$ at this $1$, and let $\tau_1$ denote the corresponding triangle
in $\D(\textbf{n})$ and $d_{i_1}$ the corresponding distinguished diagonal of  $\mathcal{P}_{k+1}$ as discussed in the proof of Lemma~\ref{lem: once}. Denote the resulting
$(k-1)$-tuple by $\textbf{n}^1$. Then $\textbf{n}^1$ will have at most $l$ interior $1$'s. If  $\textbf{n}^1$ again has $l$ interior $1$'s,
then blow down at the $v$th interior $1$ again to obtain a $(k-2)$-tuple $\textbf{n}^2$. Let $\tau_2$ denote the corresponding triangle
in $\D(\textbf{n})$ and $d_{i_2}$ the corresponding distinguished diagonal of $\mathcal{P}_{k+1}$. Repeat in this way until we obtain a
$(k-t)$-tuple $\textbf{n}^t$ with less than $l$ interior $1$'s. Let $\tau_1,\tau_2,\ldots,\tau_t$ and $d_{i_1},d_{i_2},\ldots,d_{i_t}$ denote
the associated sequences of triangles and diagonals, respectively. Then, as we blow down each time at sequentially the same interior
$1$, each pair of successive $\widetilde{d}_{i_j}$'s will be adjacent in the sense that they bound a triangle in $\D(\textbf{n})$. Let $\textbf{n}_t$
 be the $k$-tuple that corresponds to the triangulation obtained from the initial triangulation $\D(\textbf{u}_k)$ by flipping the distinguished diagonals $d_{i_1},d_{i_2},\ldots,d_{i_t}$ in order as in the proof of Lemma~\ref{lem: once}. Note that $d_{i_1},d_{i_2},\ldots,d_{i_t}$ is a contiguous sequence of distinguished diagonals in $\D(\textbf{u}_k)$.

Let $\textbf{b}$ be the $k$-tuple $(b_1, b_2, \ldots, b_k)$, where $\frac{p}{p-q}=[b_1, b_2, \ldots, b_k]$ is the Hirzebruch-Jung continued fraction expansion, with $b_i \geq 2$ for $1 \leq i \leq k$. Since, by assumption, $\textbf{n} \in \mathcal{Z}_{k}(\textstyle{\frac{p}{p-q}})$ we know that $\textbf{n} \leq \textbf{b}$. In the following we check that $\textbf{n}_t \leq \textbf{b}$, which implies that $\textbf{n}_t \in \mathcal{Z}_{k}(\textstyle{\frac{p}{p-q}})$ as well.

First note that the two triangulations $\D(\textbf{n}_t)$ and $\D(\textbf{n})$ both contain all of the triangles $\tau_1,\tau_2,\ldots,\tau_t$ and hence coincide on the part of the polygon $\mathcal{P}_{k+1}$ separated from the distinguished vertex by the diagonal $\widetilde{d}_{i_t}$. Since the part of the triangulation $\D(\textbf{n}_t)$ remaining after cutting  along $\widetilde{d}_{i_t}$ is precisely the initial triangulation $\D(\textbf{u}_{k-t})$ of $(k-t+1)$-gon $\mathcal{P}_{k-t+1}$, it follow immediately that each component of $\textbf{n}_t$, possibly with the exception
of the  two components corresponding to the boundary vertices of of the diagonal $\widetilde{d}_{i_t}$,  is less than or equal to
the corresponding component of the $k$-tuple $\textbf{b}$, since each component of $\textbf{n}$ is less than or equal to the
corresponding component of $\textbf{b}$ and each component of $\textbf{b}$ is at least $2$. To see that the two components of $\textbf{n}_t$ corresponding to the boundary vertices of $\widetilde{d}_{i_t}$ are also less than or equal to the corresponding components of $\textbf{b}$, we argue as follows:
Let $\D(\textbf{n}^t)$ denote the triangulation of the $(k-t+1)$-gon $\mathcal{P}_{k-t+1}$ corresponding
to the $(k-t)$-tuple $\textbf{n}^t$. This is a subtriangulation of $\D(\textbf{n})$ given by cutting along the diagonal $\widetilde{d}_{i_t}$.
Note that if either of the two components of $\textbf{n}^t$ corresponding to the boundary vertices of $\widetilde{d}_{i_t}$ is an interior component, then it is greater than $1$, since otherwise $\textbf{n}^t$ would still have $l$ interior $1$'s, contrary to assumption.
It follows that each of the components of $\textbf{n}_t$ corresponding to the boundary vertices of $\widetilde{d}_{i_t}$ is less than or equal
to the corresponding component of $\textbf{n}^t$ and hence less than or equal to the corresponding component of $\textbf{b}$.
\end{proof}

\begin{Ex} {\em We illustrate the proof of Lemma~\ref{lem: same} for $\textbf{n} = (3,1,4,3,1,2,4,1,4) \in \mathcal{Z}_9$. It is clear that $\dpt(\textbf{n})=3$ by definition.  By blowing down $\textbf{n}$ sequentially at the middle $\blue{1}$ three times, we obtain the sequence $$ \textbf{n}=(3,1,4,3,\blue{1},2,4,1,4) \to \textbf{n}^1 = (3,1,4,2,\blue{1},4,1,4) \to $$ $$\textbf{n}^2 = (3,1,4,\blue{1},3,1,4) \to \textbf{n}^3 = (3,1,3,2,1,4).$$ We stopped at $\textbf{n}^3$, since  $\dpt(\textbf{n}^3)=2$. The corresponding path $$ \D(\textbf{u}_9) =\D((1,2,2,2,2,2,2,2,1)),  \D(\textbf{n}_1) = \D((1,2,2,3,1,3,2,2,1)),$$ $$\D(\textbf{n}_2) = \D((1,2,2,4,1,2,3,2,1)),  \D(\textbf{n}_3) = \D((1,2,3,3,1,2,4,2,1)),$$ in $\mathcal{G}_9$ can be obtained as discussed in the proof of Lemma~\ref{lem: once}. In Figure~\ref{fig: decagon}, we depicted the triangulations $\D(\textbf{n}_3)$ (on the left) and $\D(\textbf{n})$ (on the right) of the decagon. We also highlighted the vertices of the diagonal $\widetilde{d}_3$ in the triangulation $\D(\textbf{n}_3)$, which play a crucial role in our proof of Lemma~\ref{lem: same}.

\begin{figure}[ht]  \relabelbox \small {\epsfxsize=5in
\centerline{\epsfbox{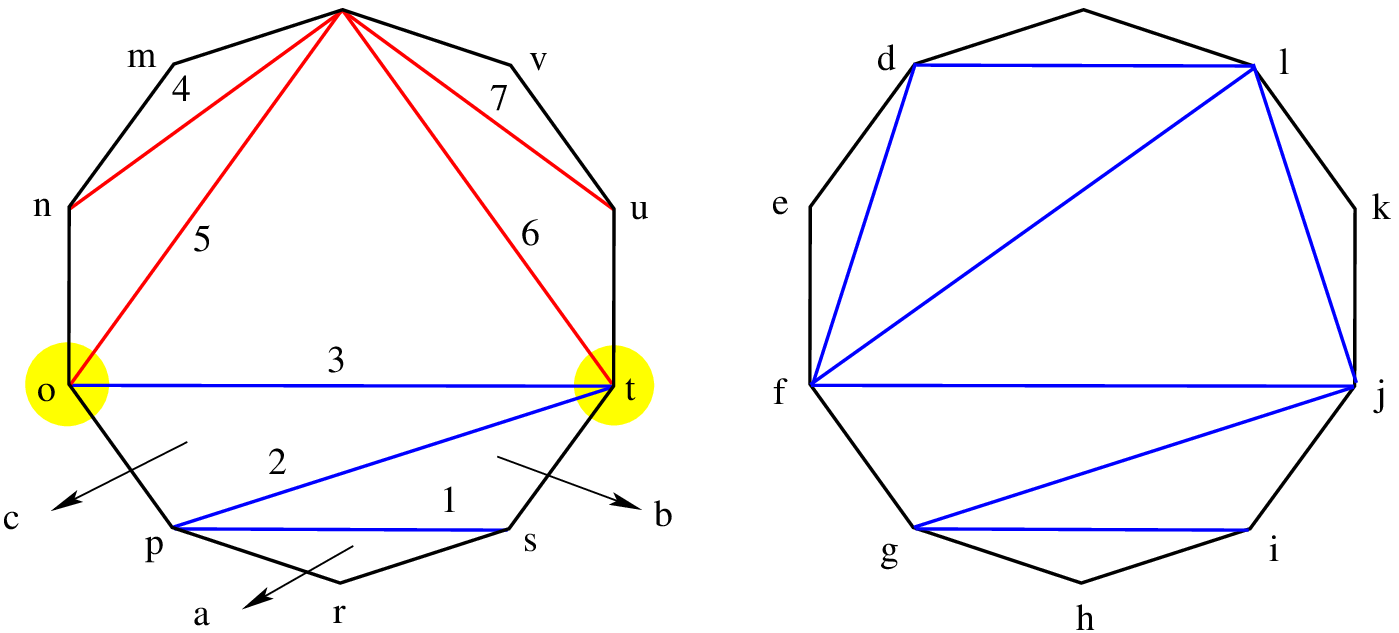}}}
\relabel{1}{\blue{$\widetilde{d}_4$}} \relabel{2}{\blue{$\widetilde{d}_5$}} \relabel{3}{\blue{$\widetilde{d}_3$}}  \relabel{4}{\red{$d_1$}} \relabel{5}{\red{$d_2$}}
  \relabel{6}{\red{$d_6$}}  \relabel{7}{\red{$d_7$}} \relabel{a}{$\tau_1$} \relabel{b}{$\tau_2$} \relabel{c}{$\tau_3$}   \relabel{d}{$3$}  \relabel{e}{$1$}  \relabel{f}{$4$}  \relabel{g}{$3$}  \relabel{h}{$1$}  \relabel{i}{$2$}  \relabel{j}{$4$}  \relabel{k}{$1$}  \relabel{l}{$4$}    \relabel{m}{$1$}  \relabel{n}{$2$}  \relabel{o}{$3$}  \relabel{p}{$3$}  \relabel{r}{$1$}  \relabel{s}{$2$}  \relabel{t}{$4$}  \relabel{u}{$2$}  \relabel{v}{$1$}
   \endrelabelbox
\caption{Triangulations $\D(\textbf{n}_3)$ and $\D(\textbf{n})$ of the decagon.}\label{fig: decagon} \end{figure}
When we cut the triangulation $\D(\textbf{n}_3)$ along the diagonal $\widetilde{d}_3$, or equivalently peel away the triangles $\tau_1, \tau_2, \tau_3$, the remaining subtriangulation (on the side of the distinguished vertex) is identical to the initial triangulation of $\mathcal{P}_{6}$.  It follows that if $\textbf{n} \leq \textbf{b}$ for some $k$-tuple $\textbf{b}$, with $b_i \geq 2$ for each $1 \leq i \leq k$, then $\textbf{n}_3  \leq \textbf{b}$ as well. Note that there is a unique pair $(p,q)$ of coprime integers with $p >q \geq 1$, so that $\frac{p}{p-q}=[b_1, b_2, \ldots, b_k]$. So, in other words, if $\textbf{n}$ belongs to $\mathcal{Z}_{k}(\textstyle{\frac{p}{p-q}})$, for some coprime pair $(p,q)$, so does $\textbf{n}_3$. We would like to emphasize that $\textbf{n} \leq \textbf{b}$ does not necessarily imply that $\textbf{n}_1 \leq \textbf{b}$, because of the fact that sixth component of $\textbf{n}_1$ is one higher than that of $\textbf{n}$, and it does not necessarily imply that $\textbf{n}_2 \leq \textbf{b}$, because of the fact that fourth component of $\textbf{n}_2$ is one higher than that of $\textbf{n}$.}
\end{Ex}

\begin{proof}[Proof of  Proposition~\ref{prop: depth}.]
Let $W_{p,q}(\textbf{n})$ denote the minimal symplectic filling of $(L(p,q), \xi_{can})$ that corresponds to $\textbf{n}= (n_1, \ldots, n_k) \in \mathcal{Z}_{k}(\textstyle{\frac{p}{p-q}})$, and let $\D(\textbf{n})$ be the corresponding vertex in the graph $\mathcal{G}_k^{p,q}$.  We now show that the rational blowdown depth  of  $W_{p,q}(\textbf{n})$  is bounded above by $\dpt (\textbf{n})$. The proof will be by induction on $r=\dpt (\textbf{n})$. First suppose that $r=0$, then $W_{p,q}(\textbf{n}) = W_{p,q}(\textbf{u}_k)$ is the canonical symplectic filling.

Now suppose that for some $l\geq 1$ the result is true for $0\leq r<l$. We show that the result remains true for $r=l$.
Suppose that  $\dpt (\textbf{n})=l$. Next, by setting $\textbf{n}^0=\textbf{n}$, and proceeding exactly as in proof of Lemma~\ref{lem: same}, we obtain the path
$$ \D(\textbf{u}_k)=\D(\textbf{n}_0),\D(\textbf{n}_1),\ldots,\D(\textbf{n}_t)$$ of triangulations and the associated triangles $\tau_1,\tau_2,\ldots,\tau_t$ contained in both $\D(\textbf{n}_t)$ and $\D(\textbf{n})$,  where $\dpt (\textbf{n}_t)=l-1$.
Furthermore, $\textbf{n}_t$ also belongs to $\mathcal{Z}_{k}(\textstyle{\frac{p}{p-q}})$, and hence $W_{p,q}(\textbf{n}_t)$ is also a minimal symplectic filling of $(L(p,q), \xi_{can})$.
Since the corresponding sequence of distinguished diagonals $d_{i_1},d_{i_2},\ldots,d_{i_t}$ is  contiguous,
 it follows from
the arguments given in the proof of Theorem~\ref{thm: main} that the monodromy of the planar Lefchetz fibration on $W_{p,q}(\textbf{n}_t)$
is obtained from the monodromy of the planar Lefschetz fibration on $W_{p,q}(\textbf{u}_k)$ by a {\em single} rational blowdown.

Now peel away the triangles $\tau_1,\tau_2,\ldots,\tau_t$ from $\D(\textbf{n})$ and $\D(\textbf{n}_t)$ to obtain triangulations
$\D(\overline{\textbf{n}})$ and $\D(\overline{\textbf{n}}_t)$ of $\mathcal{P}_{k-t+1}$. Note that  $\overline{\textbf{n}}$ will have $l-1$ interior $1$'s
and $\overline{\textbf{n}}_t=\textbf{u}_{k-t}$. By the induction hypothesis, it follows that the planar Lefschetz fibration on $W(\overline{\textbf{n}},\overline{\textbf{m}})$ is obtained from the planar Lefschetz fibration on
$W(\textbf{u}_{k-t},\overline{\textbf{n}}+\overline{\textbf{m}}-\textbf{u}_{k-t})$, where $\overline{\textbf{m}}$ is the
$(k-t)$-tuple having $1$ in each component that $\overline{\textbf{n}}$ has an interior $1$ and $0$ elsewhere, by at
most $l-1$ rational blowdowns. It follows that $W_{p,q}(\textbf{n})$ can be obtained from $W_{p,q}(\textbf{n}_t)$ by at most
$l-1$ rational blowdowns and hence from the canonical symplectic filling $W_{p,q}(\textbf{u}_k)$ by at most $l$ rational blowdowns.
\end{proof}

\begin{Rem} \label{rem: path} {\em Proposition~\ref{prop: depth} shows that, for each $\D(\textbf{n}) \in \mathcal{G}^{p,q}_k$,  there is a path of length $\dpt (\textbf{n})$ from $\D(\textbf{u}_k)$ to $\D(\textbf{n})$ in  $\mathcal{G}^{p,q}_k$. On the other hand, Lemma~\ref{lem: dep} implies that the minimum length of a path from $\D(\textbf{u}_k)$ to $\D(\textbf{n})$ in  $\mathcal{G}^{p,q}_k$, is at least $\dpt (\textbf{n})$. Therefore, $\dpt (\textbf{n})$ is equal to the "path-length" of $\D(\textbf{n})$ in  $\mathcal{G}^{p,q}_k$, which is defined to be the length of the shortest directed path in $\mathcal{G}^{p,q}_k$, starting from the root vertex $\D(\textbf{u}_k)$ and ending at the vertex $\D(\textbf{n})$.  } \end{Rem}

\noindent {\bf Executive summary:} Here we explicitly describe the rational blowdown algorithm. Suppose that $ \textbf{n} \in \mathcal{Z}_{k}(\textstyle{\frac{p}{p-q}})$ for some coprime pair $(p,q)$ with $1 \leq q < p$ such that $\dpt(\textbf{n})=l$. We enumerate the interior $1$'s in the $k$-tuple $\textbf{n}$ from left to right and blowdown repeatedly the interior $1$ of the same enumeration (for instance, leftmost) until we get to some $(k-t_1)$-tuple $\textbf{n}^{t_1}$ with  $\dpt(\textbf{n}^{t_1})=l-1$. We repeat the same process to obtain a sequence $\textbf{n}, \textbf{n}^{t_1}, \ldots \textbf{n}^{t_l}$ so that $\dpt(\textbf{n}^{t_j})=l-j$. Then there is a corresponding path
 $\D(\textbf{u}_k), \D(\textbf{n}_{t_1}), \ldots, \D(\textbf{n}_{t_l})= \D(\textbf{n})$ in $\mathcal{G}^{p,q}_k$ so that the minimal symplectic filling $W_{p,q}(\textbf{n}_{t_{j+1}})$ can be obtained from the minimal symplectic filling $W_{p,q}(\textbf{n}_{t_{j}})$  by a single rational blowdown along a linear plumbing graph.  Note that the triangulation $\D(\textbf{n}_{t_{j+1}})$ is obtained from the triangulation $\D(\textbf{n}_{t_{j}})$ by diagonal flips along a {\em contiguous} sequence of distinguished diagonals in  $\D(\textbf{n}_{t_{j}})$. Therefore, the linear plumbing graph for this rational blowdown can be obtained  using Proposition~\ref{prop: lpg}.

\begin{Ex} {\em Let  $\textbf{n} = (3,1,4,3,1,2,4,1,4) \in \mathcal{Z}_9$. Note  that $\dpt(\textbf{n})=3$. Fix a coprime pair $(p,q)$ with $1 \leq q < p$ such that the length of the Hirzebruch-Jung continued fraction expansion of $\textstyle{\frac{p}{p-q}}$ is $9$ and $ \textbf{n} \in \mathcal{Z}_{9}(\textstyle{\frac{p}{p-q}})$. By blowing down $\textbf{n}$ sequentially at the middle interior $1$ three times, we obtain $\textbf{n}^{t_1} = (3,1,3,2,1,4)$,  where $\dpt(\textbf{n}^{t_1})=2$. By blowing down $\textbf{n}^{t_1}$  sequentially at the rightmost interior $1$ twice, we obtain $\textbf{n}^{t_2} = (3,1,2,2)$,  where $\dpt(\textbf{n}^{t_2})=1$. By blowing down $\textbf{n}^{t_2}$  sequentially at the  unique interior $1$ twice, we obtain $\textbf{n}^{t_3} = (1,1)$,  where $\dpt(\textbf{n}^{t_3})=0$. The corresponding  path in $\mathcal{G}^{p,q}_9$ is given by
 $\D(\textbf{u}_9), \D(\textbf{n}_{t_1}) = \D((1,2,3,3,1,2,4,2,1))$, $\D(\textbf{n}_{t_2})= \D((1,2,4,3,1,2,4,1,3))$,  $\D(\textbf{n}) = \D(\textbf{n}_{t_3}) =\D((3,1,4,3,1,2,4,1,4))$.

 As a matter of fact $ \D(\textbf{n})$ is obtained from $\D(\textbf{u}_9)$ by diagonal flips along the sequence $d_4$, $d_5$, $d_3$, $d_7$, $d_6$, $d_1$, $d_2$ of distinguished diagonals in $\D(\textbf{u}_9)$, which indeed determines a path in $\mathcal{G}_{9}$. Our algorithm partitions this sequence as follows: the sequence $d_4$, $d_5$, $d_3$ of distinguished diagonals is  contiguous in $\D(\textbf{u}_9)$, the sequence of distinguished diagonals $d_7$, $d_6$ is contiguous in $\D(\textbf{n}_{t_1})$, and the sequence $d_1$, $d_2$ of distinguished diagonals is contiguous in $\D(\textbf{n}_{t_2})$. We conclude that the minimal symplectic filling $W_{p,q}(\textbf{n}_{t_{1}})$ can be obtained from the canonical symplectic filling $W_{p,q}(\textbf{u}_9)$ by a rational blowdown along the linear plumbing graph with weights $-3,-5,-2$, the minimal symplectic filling $W_{p,q}(\textbf{n}_{t_{2}})$ can be obtained from $W_{p,q}(\textbf{n}_{t_{1}})$ by a rational blowdown along the linear plumbing graph with weights $-5,-2$, and finally $W_{p,q}(\textbf{n})$ can be obtained from $W_{p,q}(\textbf{n}_{t_{2}})$ by a rational blowdown along the linear plumbing graph with weights $-2,-5$. }
\end{Ex}

\section{Examples of rational blowdown graphs}

In this section, to avoid cumbersome notation, once a pair $(p,q)$ of coprime integers with $p > q \geq 1$ is fixed, for any $k$-tuple $\textbf{n}=(n_1, \ldots, n_k) \in \mathcal{Z}_{k}$ we will speak about the $4$-manifold $\textbf{n}$ referring to $W_{p,q} (\textbf{n})$ as in Definition~\ref{def: achiral}. If $\textbf{n} \in \mathcal{Z}_{k}(\textstyle{\frac{p}{p-q}})$, we will refer to $\textbf{n}$ as a minimal symplectic filling of $(L(p, q), \xi_{can})$. We will also speak about a rational blowdown $\textbf{n} \to \textbf{n}'$, for a pair of minimal symplectic fillings $\textbf{n},  \textbf{n}' \in \mathcal{Z}_{k}(\textstyle{\frac{p}{p-q}})$. Moreover, by the triangulation $\textbf{n}$, we will mean $\D(\textbf{n})$ as in Definition~\ref{def: inverse}.

In the following, for $2 \leq s$,  we denote the curve $\d_{s,s}$ in Definition~\ref{def: curves} by $\a_s$. For the definition of the curves $\b_{i,t,j}$ in the disk $D_k$ with $k$-holes,  we refer to Figure~\ref{fig: beta}.

\subsection{Example A} \label{ex: A} Let $(p,q)=(24,7)$. Then $\frac{p}{p-q}=[2,2,4,2,2]$. The contact $3$-manifold $(L(24, 7), \xi_{can})$ has $4$ distinct minimal symplectic fillings, up to diffeomorphism, which are parametrized by the set $$\mathcal{Z}_{5}(\textstyle{\frac{24}{17}}) =\{(1,2,2,2,1), (2,1,3,2,1), (1,2,3,1,2), (2,1,4,1,2)\}.$$ The set of vertices of the graded, directed, rooted, connected graph $\mathcal{G}^{24, 7}_5$ consists of the $4$ triangulations of the hexagon which belongs to the  set $\mathcal{T}^{24,7} (P_6) \subset \mathcal{T}(P_6)$. These triangulations, each of which represents a distinct minimal symplectic filling of $(L(24, 7), \xi_{can}),$ are  encircled in red in  Figure~\ref{fig: graph247}.

\begin{figure}[ht] \small {\epsfxsize=5in
\centerline{\epsfbox{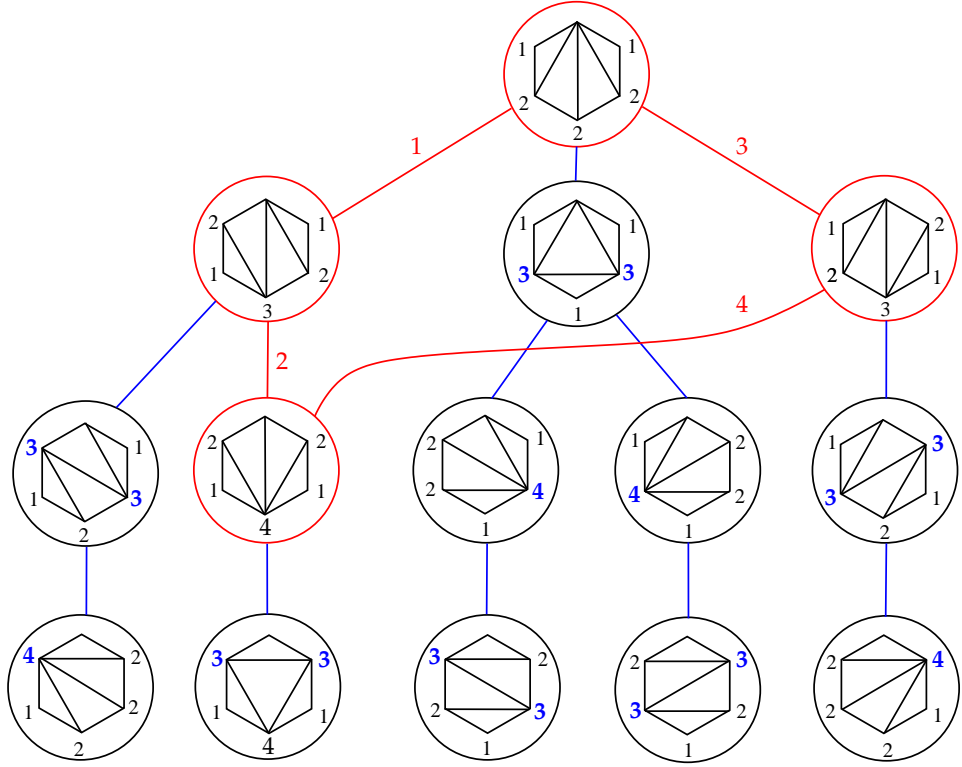}}}
\caption{The set of vertices of the  graded, directed, rooted,  connected graph $\mathcal{G}^{24, 7}_5$ consists of the triangulations of the hexagon encircled in red, each of which represents a distinct minimal symplectic filling of $(L(24, 7), \xi_{can})$, and the edges are the red arcs, each of which represents a rational blowdown.  }\label{fig: graph247} \end{figure}

There are $4$ edges of the graph $\mathcal{G}^{24, 7}_5$, consisting of the red arcs labelled by $1$, $2$, $3$ and $4$ in  Figure~\ref{fig: graph247}, corresponding to the edges $e_1$, $e_{1,3}$, $e_{3}$, and $e_{3,1}$, respectively,  in the graph $\mathcal{G}_5$. We denote the rational blowdowns represented by these arcs as $\rbd_1$,  $\rbd_2$,  $\rbd_3$,  and $\rbd_4$, respectively. By definition,  each edge of $\mathcal{G}^{24, 7}_5$ is given by the concatenation of some directed edges of $\mathcal{G}_5$.   Note that  there is no edge  in $\mathcal{G}^{24, 7}_5$ from the root vertex $(1,2,2,2,1)$ to the vertex $(2,1,4,1,2)$, since there are two distinct paths, $e_1 \sbullet[.75] e_{1,3}$ and $e_3 \sbullet[.75] e_{3,1},$ from $(1,2,2,2,1)$ to $(2,1,4,1,2)$ in  $\mathcal{G}_5$, where $\sbullet[.75]$ denotes the concatenation of the edges.

In the following, using the algorithm  described  in Section~\ref{sec: lantern}, we will explicitly describe the lantern substitution corresponding to each of the rational blowdowns in Figure~\ref{fig: graph247}. First of all,  we observe that the monodromy of the planar Lefschetz fibration on the minimal resolution $(1,2,2,2,1)$ is the product $$   D(\a_2) \circ D(\a_3) \circ D(\a_4)  \circ D(\a_5) \circ D(\g_1) \circ D^2(\g_3) \circ D(\g_5)$$ of Dehn twists along the curves given in Figure~\ref{fig: monod247}.

 \begin{figure}[ht]  \relabelbox \small {\epsfxsize=4.5in
\centerline{\epsfbox{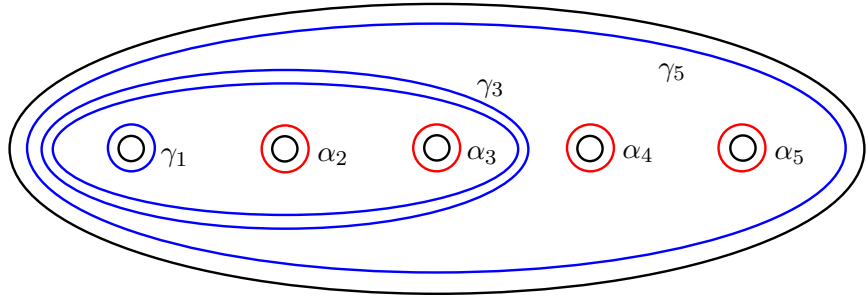}}}
\relabel{1}{$\g_1$} \relabel{2}{$\a_2$} \relabel{3}{$\a_3$} \relabel{4}{$\a_4$} \relabel{5}{$\a_5$} \relabel{6}{$\g_3$}  \relabel{8}{$\g_5$}
\endrelabelbox
\caption{Initial monodromy curves in the disk $D_5$ with $5$-holes.}\label{fig: monod247} \end{figure}

The aforementioned product of Dehn twists is also the monodromy of the planar open book $\OB_{24,7}$ that supports the contact $3$-manifold $(L(24,7), \xi_{can})$. Note that {\em each} triangulation  in  Figure~\ref{fig: graph247}, represents a smooth $4$-manifold  with boundary $L(24,7)$, together with a planar Lefschetz fibration whose monodromy is a different factorization, positive for the ones encircled in red, and achiral otherwise, of the initial monodromy of the planar open book $\OB_{24,7}$.

The red arc labelled by $1$ in Figure~\ref{fig: graph247}  represents $\rbd_1$, that corresponds to a lantern substitution in the monodromy of the planar Lefschetz fibration on the canonical symplectic filling $(1,2,2,2,1)$, as follows. We encircled the components of the $5$-tuple  which increase and boxed the ones that decrease along the edge $e_1$ below, which is completely determined by the diagonal flip along $d_1$:  $$(1,2,2,2,1) \xrightarrow[\text{lantern}]{\text{$\rbd_1$}}(\circled{2},\boxed{1},\circled{3},2,1).$$ Thus, to realize $\rbd_1$, we apply a lantern substitution along the $4$-holed sphere in $D_5$ bounded by the curves $\g_1, \a_2, \a_3, \g_3$ (see Figure~\ref{fig: monod247}), where we replace $$  D(\g_1) \circ D(\a_2) \circ D(\a_3) \circ  D(\g_3)$$ by $$  D(\d_{2,3}) \circ D(\b_{1,2,3}) \circ D(\g_2).$$  It follows that the  monodromy of the planar Lefschetz fibration on the minimal symplectic filling $(2,1,3,2,1)$ is given by $$  D(\d_{2,3}) \circ D(\b_{1,2,3}) \circ D(\g_2)  \circ D(\g_3) \circ D(\a_4) \circ D(\a_5) \circ  D(\g_5).$$

The red arc labelled by $2$ in Figure~\ref{fig: graph247},  which represents $\rbd_{2}$, corresponds to a lantern substitution
in the monodromy of the planar Lefschetz fibration on the minimal symplectic filling $(2,1,3,2,1)$, as follows. Based on the diagonal flip $$(2,1,3,2,1) \xrightarrow[\text{lantern}]{\text{$\rbd_2$}}(2,1,\circled{4},\boxed{1},\circled{2}),$$ along $d_3$, we apply a lantern substitution along the $4$-holed sphere in $D_5$ bounded by the curves $\g_3, \a_4, \a_5, \g_5$ (see Figure~\ref{fig: monod247}), where we replace $$  D(\g_3) \circ D(\a_4) \circ D(\a_5) \circ  D(\g_5)$$ by $$  D(\d_{4,5}) \circ D(\b_{3,4,5}) \circ D(\g_4).$$   As a result the  monodromy of the planar Lefschetz fibration on the minimal symplectic filling $(2,1,4,1,2)$ is given by
\begin{equation}\label{eq1}
\begin{aligned}
 & D(\d_{2,3}) \circ D(\b_{1,2,3}) \circ D(\g_2)   \circ D(\d_{4,5}) \circ D(\b_{3,4,5}) \circ D(\g_4) = \\
& D(\d_{2,3}) \circ D(\d_{4,5}) \circ D(\b_{3,4,5})\circ D(\b_{1,2,3}) \circ D(\g_2) \circ D(\g_4). \end{aligned} \end{equation}

The red arc labelled by $3$ in Figure~\ref{fig: graph247}, which  represents $\rbd_3$,  corresponds to a lantern substitution in the monodromy of the planar Lefschetz fibration on the canonical symplectic filling $(1,2,2,2,1)$, as follows. Based on the diagonal flip $$(1,2,2,2,1) \xrightarrow[\text{lantern}]{\text{$\rbd_3$}}(1,2,\circled{3},\boxed{1},\circled{2}),$$ along $d_3$, we apply a lantern substitution along the $4$-holed sphere in $D_5$ bounded by the curves $\g_3, \a_4, \a_5, \g_5$ (see Figure~\ref{fig: monod247}), where we replace $$  D(\g_3) \circ D(\a_4) \circ D(\a_5) \circ  D(\g_5)$$ by $$  D(\d_{4,5}) \circ D(\b_{3,4,5}) \circ D(\g_4).$$  It follows that the  monodromy of the planar Lefschetz fibration on the minimal symplectic filling $(1,2,3,1,2)$, is given by $$ D(\g_1)  \circ D(\a_2) \circ D(\a_3) \circ D(\g_3)\circ  D(\d_{4,5}) \circ D(\b_{3,4,5}) \circ D(\g_4).$$

 The red arc labelled by $4$ in Figure~\ref{fig: graph247}, which represents $\rbd_{4}$, corresponds to a lantern substitution in the monodromy of the planar Lefschetz fibration on the minimal symplectic filling  $(1,2,3,1,2)$,  as follows. Based on the diagonal flip $$(1,2,3,1,2) \xrightarrow[\text{lantern}]{\text{$\rbd_{4}$}}(\circled{2}, \boxed{1},\circled{4},1,2),$$ along $d_1$,  we apply a lantern substitution along the $4$-holed sphere in $D_5$ bounded by the curves $\g_1, \a_2, \a_3, \g_3$ (see Figure~\ref{fig: monod247}), where we replace $$  D(\g_1) \circ D(\a_2) \circ D(\a_3) \circ  D(\g_3)$$ by $$  D(\d_{2,3}) \circ D(\b_{1,2,3}) \circ D(\g_2).$$   As a result the  monodromy of the planar Lefschetz fibration on the minimal symplectic filling $(2,1,4,1,2)$ is given by \begin{equation}\label{eq2}
\begin{aligned}
 &  D(\d_{2,3}) \circ D(\b_{1,2,3}) \circ D(\g_2)   \circ D(\d_{4,5}) \circ D(\b_{3,4,5}) \circ D(\g_4) = \\
&  D(\d_{2,3}) \circ D(\d_{4,5}) \circ D(\b_{3,4,5})\circ D(\b_{1,2,3}) \circ D(\g_2) \circ D(\g_4), \end{aligned} \end{equation} which indeed agrees with the factorization~\eqref{eq1} above.

Note that the concatenation of $\rbd_1$ and $\rbd_2$ (and equivalently, the concatenation of $\rbd_3$ and $\rbd_4$) can be viewed as a monodromy substitution, where the initial factorization $$ D(\a_2) \circ D(\a_3) \circ D(\a_4)  \circ D(\a_5) \circ D(\g_1) \circ D^2(\g_3) \circ D(\g_5)$$ is replaced with the product $$D(\d_{2,3}) \circ D(\d_{4,5}) \circ D(\b_{3,4,5})\circ D(\b_{1,2,3}) \circ D(\g_2) \circ D(\g_4).$$ However, this monodromy substitution does not correspond to a rational blowdown, since otherwise  $(L(24, 7), \xi_{can})$ would have a rational homology ball filling, which contradicts Proposition~\ref{prop: wahl}. As a matter of fact, the minimal symplectic filling $(2,1,4,1,2)$ cannot be obtained from the canonical symplectic filling $(1,2,2,2,1)$, by a single rational blowdown as we show in Proposition~\ref{prop: depth2}.

\begin{Prop}\label{prop: depth2} The minimal symplectic filling $(2,1,4,1,2)$ of $(L(24, 7), \xi_{can})$ cannot be obtained from the canonical symplectic filling $(1,2,2,2,1)$, by a single rational blowdown. Consequently, the rational blowdown depth of the minimal symplectic filling $(2,1,4,1,2)$ is equal to $\dpt((2,1,4,1,2))=2$.
\end{Prop}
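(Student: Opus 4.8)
The plan is to prove the two assertions in turn. The depth statement is the easy consequence: $\dpt((2,1,4,1,2))=2$, since the interior entries equal to $1$ occur exactly in positions $2$ and $4$, so Proposition~\ref{prop: depth} bounds the rational blowdown depth of $(2,1,4,1,2)$ above by $2$; the depth is positive since $(2,1,4,1,2)\neq \textbf{u}_5$; and the first assertion of the proposition rules out depth $1$. So everything comes down to showing that $(2,1,4,1,2)$ is \emph{not} obtained from the canonical filling $W:=W_{24,7}(\textbf{u}_5)$ by a single rational blowdown.

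Assume, for contradiction, that it is, along a linear plumbing graph $\Gamma$. The first step is to pin down the number of vertices of $\Gamma$. A rational blowdown excises a regular neighbourhood of a configuration of spheres whose dual graph is $\Gamma$ (a $4$-manifold whose second Betti number is the number of vertices of $\Gamma$) and glues back a rational homology ball along a lens space; since the gluing locus is a rational homology sphere, a Mayer--Vietoris computation over $\bfq$ — or, since both fillings are simply connected, a Euler characteristic count — shows that the second Betti number drops by exactly the number of vertices of $\Gamma$. On the other hand, by item $(4)$ of Theorem~\ref{thm: main} the second Betti number of $W_{24,7}(\textbf{n})$ decreases by one under each diagonal flip, and $\D((2,1,4,1,2))$ is reached from the root $\D(\textbf{u}_5)$ after $\hgt((2,1,4,1,2))=|(2,1,4,1,2)|-2(k-1)=10-8=2$ flips (equivalently, from the Milnor number formula in the proof of Theorem~\ref{thm: main}, $b_2$ drops from $3$ to $1$). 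Hence $\Gamma$ has exactly two vertices.

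The second step is to invoke Proposition~\ref{prop: wahl}: a two-vertex linear plumbing graph that can be rationally blown down is obtained from the single vertex of weight $-4$ by one iteration, so its weights are $(-2,-5)$ (iteration $(I)$) or $(-5,-2)$ (iteration $(II)$). In either case $\Gamma$ carries a vertex of weight $-5$, and since the rational blowdown producing $(2,1,4,1,2)$ is a symplectic one, $W$ must contain an embedded symplectic sphere $C_0$ with $C_0\cdot C_0=-5$. This is impossible: $W$ is the minimal resolution of the corresponding cyclic quotient singularity, whose exceptional curves form a linear chain with all Euler numbers even (here $-4,-2,-4$), so the intersection lattice of $W$ is even — $v\cdot v\equiv \sum_i x_i^2\,(E_i\cdot E_i)\equiv 0 \pmod 2$ for every $v=\sum_i x_i E_i$ — and in particular has no class of square $-5$. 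This contradiction shows the depth of $(2,1,4,1,2)$ is not $1$, hence it equals $2=\dpt((2,1,4,1,2))$.

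The argument is short, and the only steps requiring a little care are the bookkeeping ones in the second paragraph: checking that a single rational blowdown drops $b_2$ by the number of vertices of the blown-down graph, and identifying the intersection form of $W_{24,7}(\textbf{u}_5)$ precisely enough to see that it is an even lattice for this particular $(p,q)=(24,7)$. Once these are settled, Proposition~\ref{prop: wahl} pins $\Gamma$ down to the two displayed possibilities and the parity obstruction — an even $4$-manifold cannot contain a $(-5)$-sphere — closes the argument at once. (One should also recall, from the discussion preceding Proposition~\ref{prop: wahl}, that a \emph{symplectic} rational blowdown along $\Gamma$ genuinely requires a symplectic sphere configuration realizing the weights of $\Gamma$, so the homological conclusion $C_0\cdot C_0=-5$ does follow; alternatively one could phrase the obstruction on the monodromy side, using the explicit factorizations computed in Example~A.)
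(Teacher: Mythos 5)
Your proof is correct and follows essentially the same route as the paper: the paper likewise notes that since $\frac{24}{7}=[4,2,4]$ the canonical filling is the $-4,-2,-4$ linear plumbing with even intersection lattice, that a height-$2$ blowdown would have to be along a two-vertex graph with weights $-2,-5$, and that an even lattice admits no class of odd square. Your extra bookkeeping (Mayer--Vietoris for the $b_2$ drop and the explicit appeal to Proposition~\ref{prop: wahl} for the weights) just spells out steps the paper states tersely.
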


\begin{proof} We first  observe that  the canonical symplectic filling $\textbf{u}_5=(1,2,2,2,1)$ of $(L(24, 7), \xi_{can})$ is diffeomorphic to the linear plumbing of disk bundles over a sphere with weights $-4,-2,-4$ because $\frac{24}{7} =[4,2,4]$. It follows that the lattice $H_2(\textbf{u}_5, \mathbb{Z})$ is even, i.e. the square of any class in $H_2(\textbf{u}_5, \mathbb{Z})$  is an even integer. Note that since $(2,1,4,1,2)$ has height $2$, the linear plumbing graph for a possible rational blowdown from $\textbf{u}_5$ to $(2,1,4,1,2)$ must have  two vertices with weights $-2$ and $-5$. However,  $\textbf{u}_5$ cannot contain an embedded sphere of {\em odd} self-intersection, since  $H_2(\textbf{u}_5, \mathbb{Z})$ is even.
\end{proof}

\subsection{Example B} \label{ex: B} Let $(p,q)=(81,47)$. Then $\frac{p}{p-q}=[3,2,3,3,3]$. The contact $3$-manifold $(L(81, 47), \xi_{can})$ has $6$ distinct minimal symplectic fillings, up to diffeomorphism, which are parametrized by the set $\mathcal{Z}_{5}(\textstyle{\frac{81}{81-47}}) = $ $$ \{(1,2,2,2,1), (2,1,3,2,1), (1,2,3,1,2), (3,1,2,3,1),(3,1,3,1,3), (3,2,1,3,2)\}.$$ The set of vertices of the graded, directed, rooted, connected graph $\mathcal{G}^{81, 47}_5$ consists of the $6$ triangulations of the hexagon which belongs to the  set $\mathcal{T}^{81,47} (P_6) \subset \mathcal{T}(P_6)$. These triangulations are  encircled in red in  Figure~\ref{fig: graph8147}, and each one of them represents a distinct minimal symplectic filling of $(L(81, 47), \xi_{can})$.

\begin{figure}[ht] \small {\epsfxsize=5in
\centerline{\epsfbox{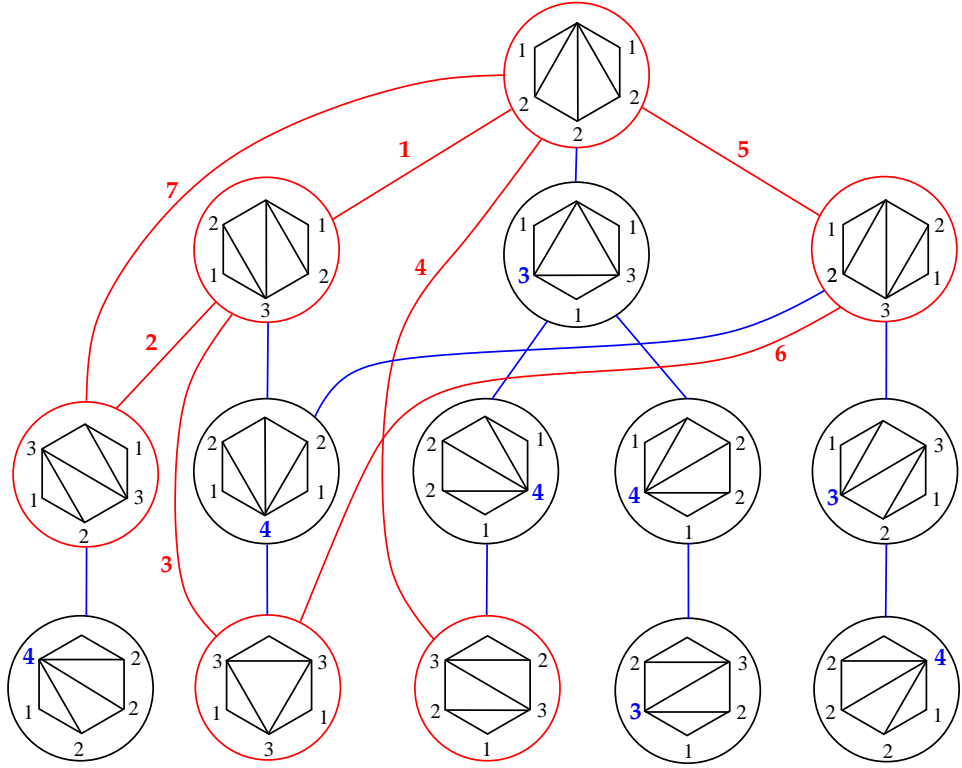}}}
\caption{The set of vertices of the  graded, directed, rooted,  connected graph $\mathcal{G}^{81, 47}_5$ consists of the triangulations of the hexagon encircled in red, each of which represents a distinct minimal symplectic filling of $(L(81, 47), \xi_{can})$, and the edges are the red arcs, each of which represents a rational blowdown.  }\label{fig: graph8147} \end{figure}

There are $7$ edges of the graph $\mathcal{G}^{81, 47}_5$, consisting of the red arcs enumerated from $1$ to $7$  in  Figure~\ref{fig: graph8147}, each of which represents a rational blowdown. By definition,  each edge of $\mathcal{G}^{81, 47}_5$ is given by the concatenation of some directed edges of  $\mathcal{G}_5$ as follows, $\rbd_1 = e_1,\;$ $\rbd_2 = e_{1,2}, \;$ $\rbd_3 = e_{1,3} \sbullet[.75]  e_{1,3,2},\; $ $\rbd_4 = e_2 \sbullet[.75]  e_{2,1} \sbullet[.75]  e_{2,1,3},\; $ $ \rbd_5 = e_3, \;$ $\rbd_6 = e_{3,1} \sbullet[.75]   e_{3,1,2}$, and $\rbd_7 =  e_1 \sbullet[.75] e_{1,2}$. Note that  the paths $e_1 \sbullet[.75] e_{1,3} \sbullet[.75] e_{1,3,2}$ and  $e_3 \sbullet[.75] e_{3,1} \sbullet[.75] e_{3,1,2}$
 in $\mathcal{G}_5$ are not edges in $\mathcal{G}^{81, 47}_5$, since there are two distinct paths from $(1,2,2,2,1)$ to $(3,1,3,1,3)$ in  $\mathcal{G}_5$.

In the following, we will explicitly describe the lantern substitutions needed for each of the rational blowdowns in Figure~\ref{fig: graph8147}. First of all,  we observe that the initial monodromy corresponding to the planar Lefschetz fibration on the minimal resolution $(1,2,2,2,1)$ is the product $$   D(\a_2) \circ D(\a_3) \circ D(\a_4)  \circ D(\a_5) \circ D^2(\g_1) \circ D(\g_3)  \circ D(\g_4)  \circ D^2(\g_5)$$ of Dehn twists along the curves given in Figure~\ref{fig: monod}.

 \begin{figure}[ht]  \relabelbox \small {\epsfxsize=4.5in
\centerline{\epsfbox{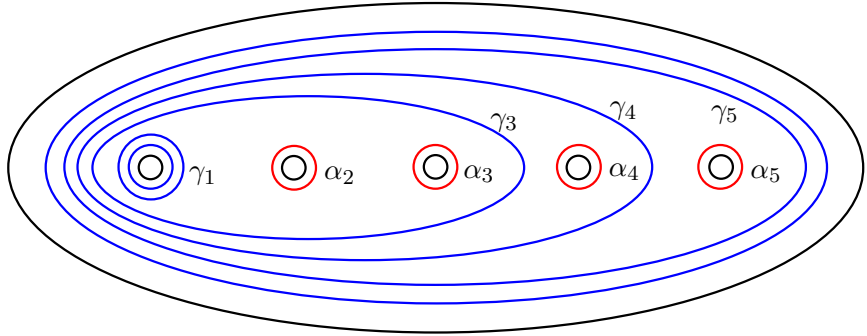}}}
\relabel{1}{$\g_1$} \relabel{2}{$\a_2$} \relabel{3}{$\a_3$} \relabel{4}{$\a_4$} \relabel{5}{$\a_5$} \relabel{6}{$\g_3$} \relabel{7}{$\g_4$} \relabel{8}{$\g_5$}
\endrelabelbox
\caption{Initial monodromy curves in the disk $D_5$ with $5$-holes.}\label{fig: monod} \end{figure}

The aforementioned product of Dehn twists is also the monodromy of the planar open book $\OB_{81,47}$ that supports the contact $3$-manifold $(L(81,47), \xi_{can})$. Note that {\em each} triangulation  in  Figure~\ref{fig: graph8147}, represents a smooth $4$-manifold  with boundary $L(81,47)$, together with a planar Lefschetz fibration whose monodromy is a different factorization, positive for the ones encircled in red, and achiral otherwise, of the initial monodromy of the planar open book $\OB_{81,47}$.

The red arc labelled by $1$ in Figure~\ref{fig: graph8147} (which is the edge $e_1$ in the graph $\mathcal{G}_5$), represents $\rbd_1$, that corresponds to a lantern substitution in the monodromy of the planar Lefschetz fibration on the canonical symplectic filling $(1,2,2,2,1)$, as follows. We encircled the entries of the $5$-tuple  which increase and boxed the ones that decrease along the edge $e_1$ below, which is completely determined by the diagonal flip along $d_1$:  $$(1,2,2,2,1) \xrightarrow[\text{lantern}]{\text{$e_1$}}(\circled{2},\boxed{1},\circled{3},2,1).$$ Thus, to realize $\rbd_1$, we apply a lantern substitution along the $4$-holed sphere in $D_5$ bounded by the curves $\g_1, \a_2, \a_3, \g_3$ (see Figure~\ref{fig: monod}). As a result of this lantern substitution, one of the Dehn twists that appear in the new factorization is along the convex curve $\d_{2,3}$.

The red arc labelled by $2$ in Figure~\ref{fig: graph8147} (which is the edge $e_{1,2}$ in the graph $\mathcal{G}_5$), represents $\rbd_2$, that corresponds to a lantern substitution in the monodromy of the planar Lefschetz fibration on the minimal symplectic filling $(2,1,3,2,1)$. The lantern substitution is determined by the diagonal flip along $d_2$:  $$(2,1,3,2,1) \xrightarrow[\text{lantern}]{\text{$e_{1,2}$}}(\circled{3},1 ,\boxed{2},\circled{3},1).$$ Thus, to realize $\rbd_2$, we apply a lantern substitution along the $4$-holed sphere bounded by the curves $\g_1, \d_{2,3}, \a_4, \g_4$ in $D^5$. Note that the Dehn twist along $\d_{2,3}$ appeared as a result of $\rbd_1$, and it belongs to the positive factorization of the monodromy of the planar Lefschetz fibration on the minimal symplectic filling $(2,1,3,2,1)$.

The red arc labelled by $7$ in Figure~\ref{fig: graph8147} (which is the concatenation $e_1  \sbullet[.75] e_{1,2}$ of the edges $e_1$ and $e_{1,2}$ in the graph $\mathcal{G}_5$), represents $\rbd_7$. In other words, the minimal symplectic filling $(3,1,2,3,1)$ can be obtained from  the minimal resolution $(1,2,2,2,1)$ by a {\em single} rational blowdown, that is a concatenation of  $\rbd_1$ and $\rbd_2$. Here is an explanation of this phenomenon using explicit monodromy substitutions. We observe that in the concatenation of the two lantern substitutions we use Dehn twists along the curves $\g_1, \g_1, \g_3, \g_4, \a_2, \a_3, \a_4$, all of which belong to the set of initial monodromy curves in Figure~\ref{fig: monod},  and  Dehn twists along the curves
$\g_2, \g_3$ (again belonging to the initial set of curves) emerge as a result of these substitutions. So, in the concatenation,  we could just use Dehn twists along the curves $\g_1, \g_1,  \g_4, \a_2, \a_3, \a_4$. This means that, there is a relation in the mapping class group of the $5$-holed sphere in $D_5$ bounded by the curves $\g_1, \a_2, \a_3, \a_4, \g_4$, where the product $$D^2(\g_1) \circ D(\a_2) \circ D(\a_3) \circ D(\a_4) \circ D(\g_4)$$ is isotopic to the product of Dehn twists about $4$ curves. One can easily verify that this relation is precisely the {\em daisy relation} (see \cite{emvhm}). Since the triangulation $(3,1,2,3,1)$ is obtained from $(1,2,2,2,1)$ by flips along the contiguous sequence $d_1, d_2$ of distinguished diagonals in $(1,2,2,2,1)$, the linear plumbing graph for $\rbd_7$ has weights  $-2,-5$, by Proposition~\ref{prop: lpg}. We would like to point out that the rational blowdown  $\rbd_7$  is not immediately visible in the dual plumbing graph of the corresponding cyclic quotient singularity, which is the linear graph with weights $-2,-4,-3,-3,-2$.

Note  that the minimal symplectic filling $(3,1,3,1,3)$  of height $3$ in Figure~\ref{fig: graph8147},   can be obtained from the minimal resolution $(1,2,2,2,1)$ via two distinct rational
blowdown sequences: Apply  $\rbd_1$ first and then $\rbd_3$ or apply $\rbd_5$ first and then $\rbd_6$. We already discussed the lantern substitution for $\rbd_1$ and the monodromy substitution for $\rbd_3$ can be seen as follows. First of all, $\rbd_3$ is obtained by a concatenation of two lantern substitutions corresponding to the edges $e_{1,2}$ and $e_{1,3,2}$ in the graph $\mathcal{G}_5$, respectively, as follows: $$(2,1,3,2,1) \xrightarrow[\text{lantern}]{\text{$e_{1,3}$}}(2,1,\circled{4},\boxed{1},\circled{2}) \xrightarrow[\text{lantern}]{\text{$e_{1,3,2}$}}  (\circled{3},1,\boxed{3},1,\circled{3}). $$

Starting from the monodromy factorization for $(2,1,3,2,1)$, we need to insert a cancelling pair of Dehn twists along $\g_3$, which is dictated by the fact that the triangulation $(2,1,4,1,2)$ that we skip  does not belong to $\mathcal{T}^{81,47} (P_6)$, exactly because its {\em third} entry (colored blue in Figure~\ref{fig: graph8147}) is one higher than the third entry of $(3,2,3,3,3)$.
 Then we apply a lantern substitution along the $4$-holed sphere bounded by the curves $\g_3, \a_4, \a_5, \g_5$.  It is clear that the resulting monodromy factorization has a negative Dehn twist along $\g_3$, and we obtain an {\em achiral} planar  Lefschetz fibration on $(2,1,4,1,2)$ as expected, since the triangulation $(2,1,4,1,2)$  is indeed included in the  graph $\mathcal{G}_5$ but not in  $\mathcal{G}^{81, 47}_5$!

 Nevertheless, as a result  of the first lantern substitution corresponding to the edge $e_{1,3}$,  a  Dehn twist emerges  in the new factorization  along the convex curve $\d_{4,5}$, which allows one to apply a lantern substitution along the $4$-holed sphere bounded by the curves $\g_1, \d_{2,3}, \d_{4,5}, \g_5$.   The resulting monodromy factorization of the planar  Lefschetz fibration on the minimal symplectic filling $(3,1,3,1,3)$ is
positive again since a Dehn twist along $\g_3$ emerges  in the new factorization to cancel out the negative one. Since the triangulation $(3,1,3,1,3)$ is obtained from $(2,1,3,2,1)$ by flips along the contiguous sequence $d_3, d_2$ of distinguished diagonals in $(2,1,3,2,1)$, the linear plumbing graph for $\rbd_3$ has weights  $-5,-2$, by Proposition~\ref{prop: lpg}.

We would like to show that $\rbd_3$ is a daisy substitution for the $5$-holed sphere and illustrate an important step in our proof of Theorem~\ref{thm: main}. Consider the path $$\textbf{n}_0 = (2,1,3,2,1),  \; \textbf{n}_1 = (2,1,4,1,2), \; \textbf{n}_2=(3,1,3,1,3)$$ in $\mathcal{G}_5$ and let $$\overline{\textbf{n}}_0 = (1,2,2,1), \;  \overline{\textbf{n}}_1 = (1,3,1,2), \;   \overline{\textbf{n}}_2 = (2,2,1,3)$$ be the corresponding path in $\mathcal{G}_4$, obtained by blowing down the $1$ in the second entry of each of the $5$-tuples $\textbf{n}_0, \textbf{n}_1, \textbf{n}_2$. Note that this corresponds, geometrically, to peeling off the same triangle from each one of the triangulations $\textbf{n}_0, \textbf{n}_1, \textbf{n}_2$. Now in the concatenation of the two lantern substitutions in $D_4$! corresponding to $$(1,2,2,1) \xrightarrow[\text{lantern}]{}(1,\circled{3},\boxed{1},\circled{2}) \xrightarrow[\text{lantern}]{}  (\circled{2},\boxed{2},1,\circled{3}), $$ we use Dehn twists along the curves $\overline{\g}_1, \overline{\g}_2, \overline{\g}_4,  \overline{\g}_4, \overline{\a}_2, \overline{\a}_3, \overline{\a}_4$ (overline is used to denote the curves in $D_4$, to distinguish them from the curves in $D_5$)   and Dehn twists along the curves $ \overline{\g}_2,  \overline{\g}_3$ emerge as a result of these substitutions. Overall, we substitute the product of Dehn twists along the curves $\overline{\g}_1, \overline{\g}_4,  \overline{\g}_4, \overline{\a}_2, \overline{\a}_3, \overline{\a}_4$, by Dehn twists along four curves. One can easily verify that this is nothing but a daisy substitution.  To see that, the path $\textbf{n}_0, \textbf{n}_1, \textbf{n}_2$ is also a rational blowdown of the same type, we just paste back the triangle we peeled away, which has the effect of embedding $D_4$ into $D_5$ as illustrated in Figure~\ref{fig: embed}.

 \begin{figure}[ht]  \relabelbox \small {\epsfxsize=3.2in
\centerline{\epsfbox{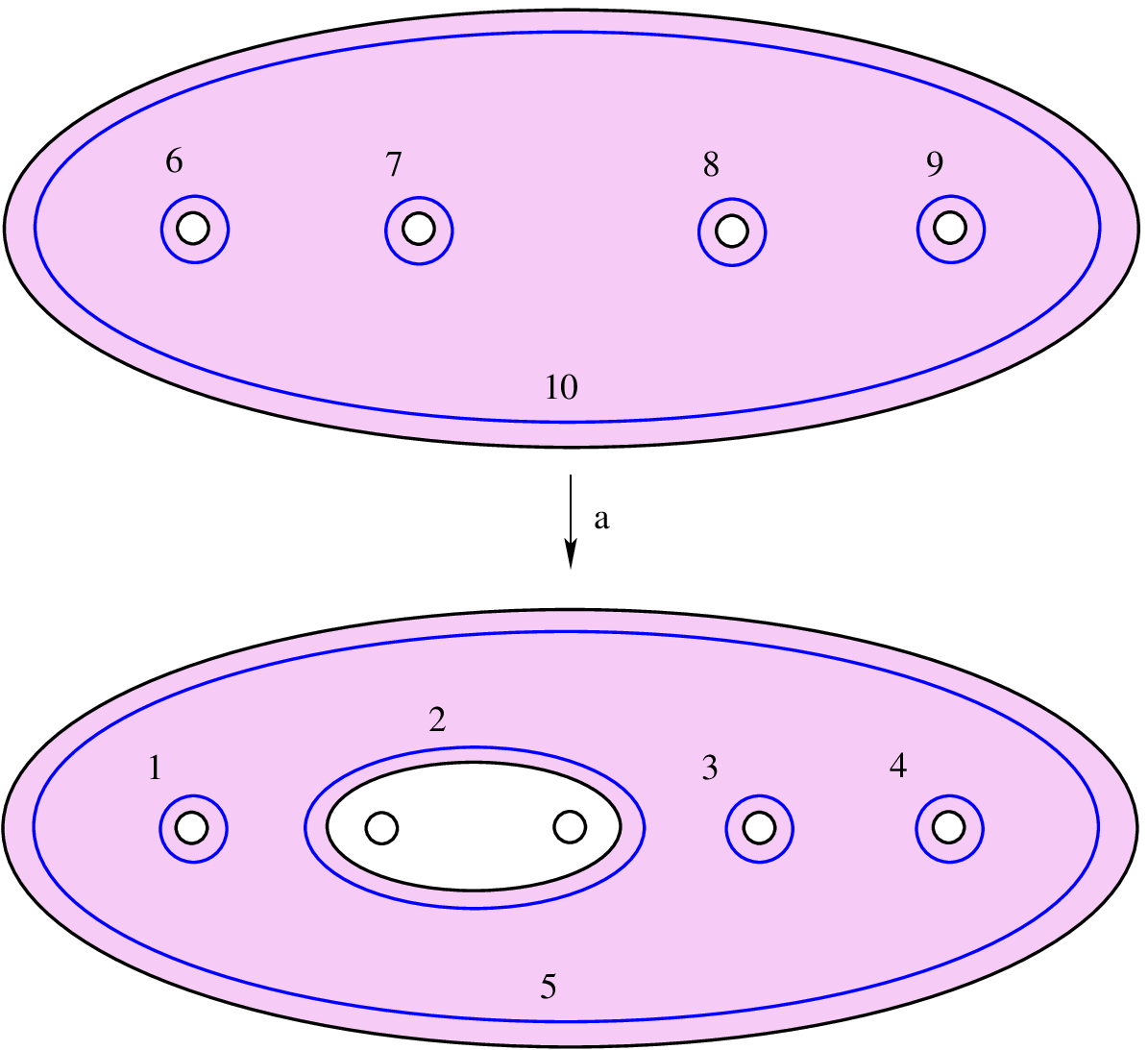}}}
 \relabel{1}{$\psi (\overline{\g}_1)$} \relabel{2}{$\psi (\overline{\a}_2)$} \relabel{3}{$\psi (\overline{\a}_3)$} \relabel{4}{$\psi (\overline{\a}_4)$} \relabel{5}{$\psi (\overline{\g}_4)$} \relabel{6}{$\overline{\g}_1$} \relabel{7}{$\overline{\a}_2$} \relabel{8}{$\overline{\a}_3$} \relabel{9}{$ \overline{\a}_4$} \relabel{10}{$\overline{\g}_4$} \relabel{a}{$\psi$}
\endrelabelbox
\caption{The embedding $\psi \colon D_4 \to D_5$.}\label{fig: embed} \end{figure}

More precisely, if $\psi \colon D_4 \to D_5$ denotes this embedding, then $\psi (\overline{\g}_1)=\g_1 $, $\psi (\overline{\a}_2)=\d_{2,3}$, $\psi (\overline{\a}_3)=\a_4$, $\psi (\overline{\a}_4)=\a_5$, and $\psi (\overline{\g}_4)=\g_5$. Therefore, the corresponding daisy substitution in $D_5$ replaces $$ D(\psi (\overline{\g}_1)) \circ D(\psi (\overline{\a}_2)) \circ  D(\psi (\overline{\a}_3)) \circ D(\psi (\overline{\a}_4))  \circ D^2(\psi (\overline{\g}_4))$$ by the product of four Dehn twists along curves in $D_5$ obtained by the embedding $\psi$.

\begin{Rem} \label{rem: conca}  {\em The concatenation of the two rational blowdowns $\rbd_1$ and  $\rbd_3$, is {\em not} a rational blowdown. To see this, we simply observe that  in the concatenation of the three lanterns (one for $\rbd_1$, and two for $\rbd_3$),  we use Dehn twists along the curves $\g_1, \g_1, \g_3, \g_3, \g_5, \g_5, \a_2, \a_3, \a_4, \a_5$ and  Dehn twists along the curves $\g_2, \g_3, \g_4$ emerge as a result of these substitutions. After the cancellations, we see that the product of Dehn twists along the curves $\g_1,  \g_1, \g_3, \g_5, \g_5, \a_2, \a_3, \a_4, \a_5$ is replaced by the product of Dehn twists along six curves in $D_5$. We claim that  this monodromy substitution {\em does not} correspond to a rational blowdown. Suppose, on the contrary, that it did correspond to a rational blowdown and consider
the contact lens space $(L(r,s),\xi_{can})$, where  $\frac{r}{s} = [2,4,4,2]$. Since  $\frac{r}{r-s}=[3,2,3,2,3]$, the monodromy of the canonical symplectic filling of $(L(r,s),\xi_{can})$ is a product of Dehn twists precisely along the curves $\g_1,  \g_1, \g_3, \g_5, \g_5, \a_2, \a_3, \a_4, \a_5$. As we are assuming that the given monodromy substitution does correspond to a rational blowdown, it follows that the
canonical symplectic filling of  $(L(r,s),\xi_{can})$ can be rationally blown down. However, this contradicts Proposition~\ref{prop: wahl}
as  $-2,-4,-4,-2$ cannot be obtained from $-4$ by iterations of type $(I)$ and $(II)$. Our claim follows.}
\end{Rem}

The minimal symplectic filling $(3,2,1,3,2)$ of height $3$, can be obtained from $(1,2,2,2,1)$ by a single rational blowdown $\rbd_4$ which is the composition of three lantern substitutions,
$$(1,2,2,2,1) \xrightarrow[\text{lantern}]{\text{$e_2$}}(1,\circled{3},\boxed{1},\circled{3},1) \xrightarrow[\text{lantern}]{\text{$e_{2,1}$}}  (\circled{2},\boxed{2},1,\circled{4},1) \xrightarrow[\text{lantern}]{\text{$e_{2,1,3}$}}  (\circled{3},2,1,\boxed{3},\circled{2}) $$  each of which is represented by the corresponding  blue edge  in  Figure~\ref{fig: graph8147}. The lantern corresponding to the blue edge $e_2$ requires the insertion of a cancelling pair of Dehn twists along $\g_2$, which is dictated by the fact that the triangulation $(1,3,1,3,1)$ that we skip  does not belong to $\mathcal{T}^{81,47} (P_6)$, exactly because its {\em second} entry (colored blue in Figure~\ref{fig: graph8147}) is one higher than the second entry of $(3,2,3,3,3)$. Then we apply a lantern substitution along the $4$-holed sphere bounded by the curves $\g_2, \a_3, \a_4, \g_4$.  As a result  of this first lantern substitution,  a  Dehn twist emerges  in the new factorization  along the convex curve $\d_{3,4}$.  After inserting a cancelling pair of Dehn twists along $\g_4$, which is dictated by the fact that the triangulation $(2,2,1,4,1)$ that we skip  does not belong to  $\mathcal{T}^{81,47} (P_6)$, exactly because its {\em fourth} entry (colored blue in Figure~\ref{fig: graph8147}) is one higher than the fourth entry of $(3,2,3,3,3)$. This allows one to apply a lantern substitution, corresponding to the blue edge $e_{2,1}$ along the $4$-holed sphere bounded by the curves $\g_1, \a_{2}, \d_{3,4}, \g_4$. In the  resulting monodromy factorization a Dehn twist along $\g_2$ emerges to cancel out the negative one we inserted in the previous step, but we still have a negative twist along $\g_4$. The final lantern substitution, corresponding to the blue edge $e_{2,1,3}$, is applied along the $4$-holed sphere bounded by the curves $\g_1, \d_{2,4}, \a_5, \g_5$. As a result  of this final lantern substitution,  a  Dehn twist emerges  in the new factorization  along  $\g_{4}$, which cancels out the negative one we inserted in the previous step so that we have an explicit  positive factorization for the minimal symplectic filling $(3,2,1,3,2)$.

In terms of the monodromy, by the concatenation of the $3$ lanterns, the product of Dehn twists along the curves $\g_1, \g_1, \g_4, \g_5, \a_2, \a_3, \a_4, \a_5$ is factorized into product of Dehn twists about $5$ other curves in $D_5$, which was explained in details in our paper \cite{bo}. Note that this is not a daisy substitution. Nevertheless,  we observe that the rational blowdown $\rbd_4$ is equivalent to replacing a neighborhood of the plumbing graph with weights  $-2,-5, -3$ with a rational homology ball, by Proposition~\ref{prop: lpg}, since $(3,2,1,3,2)$  is obtained from $(1,2,2,2,1)$ by flips along the contiguous sequence $d_2, d_1, d_3$ of distinguished diagonals in $(1,2,2,2,1)$. Nota that the rational blowdown $\rbd_4$ is not visible at all in the dual plumbing graph.

Finally,  we take a closer look at  $\rbd_5$ and $\rbd_6$. To realize  $\rbd_5$, corresponding to the edge $e_3$,  $$(1,2,2,2,1) \xrightarrow[\text{lantern}]{\text{$e_3$}}(1,2 ,\circled{3},\boxed{1},\circled{2}),$$ we apply the lantern substitution along the $4$-holed sphere bounded by the curves $\g_3, \a_4, \a_5, \g_5$, to get the monodromy factorization for the minimal symplectic filling $(1,2,3,1,2)$. Note that a Dehn twist along $\d_{4,5}$ appear in the new factorization. To  realize  $\rbd_6$, corresponding to the concatenation of the edge $e_{3,1}$, and $e_{3,1,2}$,  $$(1,2,3,1,2) \xrightarrow[\text{lantern}]{\text{$e_{3,1}$}}(\circled{2},\boxed{1} ,\circled{4},1,2)  \xrightarrow[\text{lantern}]{\text{$e_{3,1,2}$}}(\circled{3},1, \boxed{3},1, \circled{3}), $$ starting from the monodromy factorization for $(1,2,3,1,2)$, we insert a cancelling pair of Dehn twists along $\g_3$ and apply a lantern substitution along the $4$-holed sphere bounded by the curves $\g_1, \a_2, \a_3, \g_3$.  It follows that the  resulting
{\em achiral} monodromy factorization  corresponding to the triangulation $(2,1,4,1,2)$ is the same as described in the previous paragraph, and we proceed exactly in the same manner to finish the description of $\rbd_6$.

The concatenation of the two rational blowdowns $\rbd_5$ and  $\rbd_6$, is {\em not} a rational blowdown, which can be shown as in Remark~\ref{rem: conca}. In fact, we have the following result.

\begin{Prop}\label{prop: depth2a} The minimal symplectic filling $(3,1,3,1,3)$ of $(L(81, 47), \xi_{can})$ cannot be obtained from the canonical symplectic filling $(1,2,2,2,1)$, by a single symplectic rational blowdown. Consequently, the rational blowdown depth of the minimal symplectic filling $(3,1,3,1,3)$ is equal to $\dpt((3,1,3,1,3)) =2$.
\end{Prop}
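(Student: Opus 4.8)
The plan is to rule out rational blowdown depth $1$; combined with Proposition~\ref{prop: depth}, which already gives the upper bound $\dpt((3,1,3,1,3))=2$, and the obvious fact that $(3,1,3,1,3)$ is not the minimal resolution, this pins the depth down to exactly $2$. So suppose, towards a contradiction, that $(3,1,3,1,3)$ is obtained from the canonical symplectic filling $\textbf{u}_5=(1,2,2,2,1)$ by a single symplectic rational blowdown along a linear plumbing graph $G$. A rational blowdown along an $m$-vertex plumbing graph drops the second Betti number by $m$. Here $b_2(\textbf{u}_5)=5$ (this is the length $r$ of the Hirzebruch--Jung expansion $\tfrac{81}{47}=[2,4,3,3,2]$, and the minimal resolution is the linear plumbing with weights $-2,-4,-3,-3,-2$), while $b_2((3,1,3,1,3))=r-\hgt((3,1,3,1,3))=5-3=2$ by the identity $\mu=r-\hgt$ established in the proof of Theorem~\ref{thm: main} together with $\hgt((3,1,3,1,3))=11-2\cdot 4=3$. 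Hence $G$ has exactly $3$ vertices, and by Proposition~\ref{prop: wahl} the $3$-vertex linear plumbing graphs that can be rationally blown down are, up to reversing the order of the vertices, precisely $(-2,-5,-3)$ and $(-2,-2,-6)$.

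I first dispose of the case $G=(-2,-2,-6)$. Writing $\Lambda=H_2(\textbf{u}_5;\mathbb{Z})=\mathbb{Z}\langle E_1,\dots,E_5\rangle$ for the (negative definite) intersection lattice of the plumbing $-2,-4,-3,-3,-2$, a short computation --- completing the square in the quadratic form --- shows that the only classes in $\Lambda$ of square $-2$ are $\pm E_1$ and $\pm E_5$, and that no two of these have intersection number $+1$. A symplectic rational blowdown along $(-2,-2,-6)$ would require in particular a pair of symplectically, hence homologically nontrivially, embedded spheres of square $-2$ meeting transversally once; this is therefore impossible.

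The case $G=(-2,-5,-3)$ is the crux of the argument, and is exactly where the parity trick of Proposition~\ref{prop: depth2} fails: the lattice $-2,-4,-3,-3,-2$ is not even, and it \emph{does} contain a $(-2,-5,-3)$-configuration --- for instance the spheres $E_1$, a smoothing of $E_2\cup E_3$ (of square $-5$), and $E_4$ --- whose symplectic rational blowdown is precisely $\rbd_4$ and yields the filling $(3,2,1,3,2)$. So one must work below the level of homology. The plan is to enumerate the finitely many possibilities for the homology classes $(v_1,v_2,v_3)$ of an embedded $(-2,-5,-3)$-chain: necessarily $v_1\in\{\pm E_1,\pm E_5\}$, $v_3\in\{\pm E_3,\pm E_4\}$ with $v_1\cdot v_3=0$, and for each admissible pair the conditions $v_2^2=-5$ and $v_1\cdot v_2=v_2\cdot v_3=1$ confine $v_2$ to a short explicit list. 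For each resulting configuration one computes the restriction of the intersection form of $\textbf{u}_5$ to the orthogonal complement $\langle v_1,v_2,v_3\rangle^{\perp}$ --- which is the intersection form of the minimal symplectic filling produced by that rational blowdown --- and checks that in every case one obtains the intersection form of $(3,2,1,3,2)$ and not that of $(3,1,3,1,3)$; if the two forms happen to coincide, the two fillings are separated instead by the torsion in $H_1$, since the rational homology ball with boundary $L(25,14)$ that is glued in has $H_1\cong\mathbb{Z}/5$. As $(3,1,3,1,3)$ and $(3,2,1,3,2)$ are non-diffeomorphic --- they are distinct points of Lisca's parametrizing set $\mathcal{Z}_5(\tfrac{81}{34})$ --- this contradicts the hypothesis and finishes the proof.

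I expect this last step, the case-by-case identification of the outcome of every $(-2,-5,-3)$-blowdown of $\textbf{u}_5$, to be the main obstacle. It is plausible that a cleaner route is available along the lines of Remark~\ref{rem: conca}: a single $(-2,-5,-3)$-rational-blowdown realizing $\textbf{u}_5\to(3,1,3,1,3)$ would amount to a monodromy substitution supported on the fixed Dehn-twist curves of the planar open book $\OB_{81,47}$, and transplanting this substitution to the lens space whose canonical symplectic filling carries exactly the relevant sub-word of Dehn twists would rationally blow down a canonical symplectic filling whose dual resolution graph is not in the Wahl family (its lattice contains no class of square $-3$, so neither $3$-vertex Wahl chain can sit inside it), contradicting Proposition~\ref{prop: wahl}.
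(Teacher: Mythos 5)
Your overall strategy is the same as the paper's: reduce to the two $3$-vertex Wahl chains $(-2,-2,-6)$ and $(-2,-5,-3)$, rule out the first via square $-2$ classes (your computation that the only such classes in the lattice $-2,-4,-3,-3,-2$ are $\pm E_1,\pm E_5$, which never meet once, is correct and is a perfectly good, slightly more elementary, replacement for the paper's vanishing-cycle argument), and then analyze $(-2,-5,-3)$ configurations. However, the $(-2,-5,-3)$ case, which you rightly call the crux, has two genuine gaps. First, your enumeration constraint is false: $\pm(E_4+E_5)$ also has square $-3$ (compute $-3-2+2$), so the $-3$ end of the chain need not lie in $\{\pm E_3,\pm E_4\}$; in fact the paper's second embedding is exactly $E_1,\ E_2+E_3,\ E_4+E_5$, which your list misses. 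Second, and more seriously, the intersection form of the blown-down filling is \emph{not} the restriction of $Q_{\textbf{u}_5}$ to $\langle v_1,v_2,v_3\rangle^{\perp}$; that identification holds only over $\mathbb{Q}$, the integral lattice of the new filling being in general a finite-index overlattice of the orthogonal complement. Concretely, for the configuration $E_1,\ E_2+E_3,\ E_4$ the integral orthogonal complement has Gram matrix
$$\left[\begin{array}{rr} -389 & -118 \\ -118 & -41 \end{array}\right],$$
of determinant $2025$, while the filling $(3,2,1,3,2)$ actually produced by $\rbd_4$ along this configuration has form of determinant $9$. Worse, the forms of $(3,2,1,3,2)$ and $(3,1,3,1,3)$ are isomorphic over $\mathbb{Q}$ (both are rationally equivalent to $\langle -2,-2\rangle$), so the rational information your orthogonal-complement computation does provide cannot separate the two candidates, and your fallback via the $\mathbb{Z}/5$ in $H_1$ of the glued-in rational ball is only a gesture, not an argument. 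The closing suggestion of transplanting a monodromy substitution as in Remark~\ref{rem: conca} is also unjustified here: that remark analyzes a specific, explicitly known substitution, whereas a hypothetical single rational blowdown from $\textbf{u}_5$ to $(3,1,3,1,3)$ comes with no a priori monodromy description supported on the fixed curves.

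For comparison, the paper closes exactly this gap differently: it uses the adjunction equality to compute $c_1$ on the basis spheres ($0,-2,-1,-1,0$), which forces any symplectically embedded $(-2,-5,-3)$ chain to represent either $[S_1],\,[S_2]+[S_3],\,[S_4]$ or $[S_1],\,[S_2]+[S_3],\,[S_4]+[S_5]$; it then observes these two configurations differ by a change of basis of $H_2(\textbf{u}_5,\mathbb{Z})$, so every such blowdown yields a filling with the same intersection form as the known outcome $(3,2,1,3,2)$, whose form is not isomorphic (integrally) to that of $(3,1,3,1,3)$. If you wish to salvage your version, you must (i) complete the enumeration of square $-3$ (and square $-5$) classes rather than asserting the short lists, and (ii) replace the orthogonal-complement identification by an argument that actually pins down the integral intersection form of the result, e.g.\ by reducing all configurations to one another by lattice equivalences and quoting the known outcome of $\rbd_4$, as the paper does.
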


\begin{proof} Note that any rational blowdown from the canonical symplectic filling $\textbf{u}_5=(1,2,2,2,1)$ of $(L(81, 47), \xi_{can})$  to the minimal symplectic filling  $(3,1,3,1,3)$ must be of height $3$, and hence requires a symplectic embedding of the linear plumbing graph with weights $-6,-2,-2$ or $-2,-5,-3$  into  $\textbf{u}_5$. To rule out any possible embedding of the linear plumbing graph with weights $-6,-2,-2$ into  $\textbf{u}_5$, we just show that the linear plumbing with weights $-2,-2$ cannot be embedded into $\textbf{u}_5$. On way to see the latter is as follows. We represent, as described in \cite[Section 2]{ggp}, the classes in $H_2 (\textbf{u}_5, \mathbb{Z})$ as nullhomologous linear combinations of the vanishing cycles of the planar Lefschetz fibration on $\textbf{u}_5$. Then we use the fact that any class whose square is $-2$ is a nullhomologous linear combination of only two vanishing cycles with $\pm 1$ coefficients to derive a contradiction.

Next, we discuss all possible symplectic embeddings of the linear plumbing graph with weights $-2,-5,-3$  into  $\textbf{u}_5$. Such an embedding is indeed possible because we just showed above that the minimal symplectic filling $(3,2,1,3,2)$ is obtained from $\textbf{u}_5$ by the (symplectic) rational blowdown $\rbd_4$, which is applied along  the linear plumbing graph with weights $-2,-5,-3$. We claim that there are only two possible symplectic embeddings of the linear plumbing graph with weights $-2,-5,-3$  into  $\textbf{u}_5$, and the intersection forms of the minimal symplectic fillings obtained by symplectic rational blowdowns along these embeddings are isomorphic. But since the intersection forms of the fillings $(3,2,1,3,2)$ and $(3,1,3,1,3)$, given respectively by 
$$
	\left[
		\begin{array}{rr}
			 -2	&	1	\\
			  1	&	 -5
\end{array}
		\right] \quad \mbox{and} \quad
\left[
		\begin{array}{rr}
			 -2	&	 1\phantom{1}	\\
			  1	&	 -41
\end{array}
		\right]
$$
are not isomorphic, we conclude that $(3,1,3,1,3)$ cannot be obtained by a symplectic rational blowdown from $\textbf{u}_5$ along any linear plumbing graph with weights $-2,-5,-3$. 

To finish the proof, we give an argument to prove our claim in the preceding paragraph as follows.  We notice  that the symplectic filling $\textbf{u}_5$ is diffeomorphic to the linear plumbing with weights $-2,-4,-3, -3, -2$  (since $\frac{81}{47}=[2,4,3,3,2]$)  and hence the lattice $H_2 (\textbf{u}_5, \mathbb{Z})$ is given by $$
	\left[
		\begin{array}{rrrrr}
			 -2	&	  1	& 0 &  0 &  0 \\
			  1	&	 -4	& 1 &  0 &  0 \\ 
            0	&	  1	& -3 &  1 &  0 \\
               0	&	  0	&  1 & -3 &  1 \\
               0	&	  0	&  0 &  1 & -2
\end{array}
		\right] $$ 
with respect to the natural basis represented by the symplectic spheres in the linear plumbing with weights $-2,-4,-3, -3, -2$, respectively.  Let $[S_1], \ldots, [S_5]$ denote these classes in $H_2 (\textbf{u}_5, \mathbb{Z})$.  By the adjunction equality we calculate that $c_1 ([S_1])= c_1 ([S_5])=0$, $c_1 ([S_2])= -2$,  $c_1 ([S_3])= c_1 ([S_4])=-1$, where $c_1([S_i])$ is defined to be $\langle c_1(\textbf{u}_5),[S_i]\rangle$.  Now using the lattice $H_2 (\textbf{u}_5, \mathbb{Z})$, and the restrictions induced by $c_1$, we conclude by a straightforward calculation, that the only embeddings of symplectic spheres of square $-2$, $-5$, $-3$ (as a linear chain in this order) are given by
$$ [S_1], [S_2]+[S_3], [S_4] \;\;\mbox{or} \;\; [S_1], [S_2]+[S_3], [S_4]+[S_5].$$  Finally, by considering the basis $[S_1], [S_2], [S_3],  [S_4]+[S_5],  -[S_5]$ of $H_2 (\textbf{u}_5, \mathbb{Z})$ , we see that these two embeddings are equivalent by a change of basis, and hence the intersection forms of the minimal symplectic fillings obtained by symplectic rational blowdowns along these embeddings are isomorphic.
\end{proof}

 \begin{figure}[ht]  \relabelbox \small {\epsfxsize=4in
\centerline{\epsfbox{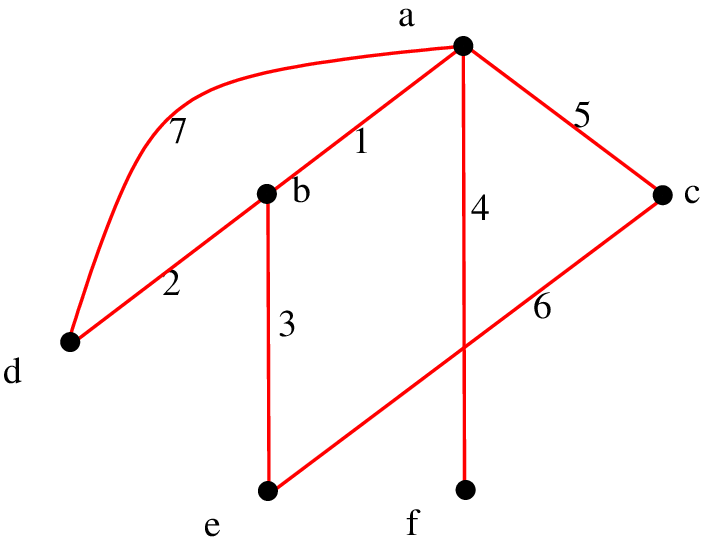}}}
 \relabel{a}{$(1,2,2,2,1)$}  \relabel{b}{$(2,1,3,2,1)$} \relabel{c}{$(1,2,3,1,2)$} \relabel{d}{$(3,1,2,3,1)$} \relabel{e}{$(3,1,3,1,3)$} \relabel{f}{$(3,2,1,3,2)$} \relabel{1}{$-4$} \relabel{2}{$-4$} \relabel{3}{$-5,-2$} \relabel{4}{$-2,-5,-3$} \relabel{5}{$-4$} \relabel{6}{$-2,-5$}  \relabel{7}{$-2,-5$}
\endrelabelbox
\caption{The rational blowdown graph $\mathcal{G}^{81, 47}_5$. }\label{fig: g8147} \end{figure}

 We  depicted another version of the graph $\mathcal{G}^{81, 47}_5$ in Figure~\ref{fig: g8147}, and decorated each edge with the weights of the linear plumbing graph of type $T_0$ that is used in the corresponding rational blowdown.

\subsection{Example C} \label{ex: C} Let $(p,q)=(37,10)$. Then $\frac{p}{p-q}=[2,2,3,2,4]$. The contact $3$-manifold $(L(37, 10), \xi_{can})$ has $4$ distinct minimal symplectic fillings, up to diffeomorphism, which are parametrized by the set $$\mathcal{Z}_{5}(\textstyle{\frac{37}{37-10}}) =  \{(1,2,2,2,1), (2,1,3,2,1), (1,2,3,1,2), (2,2,2,1,4)\}.$$ The graded, directed, rooted, connected graph $\mathcal{G}^{37, 10}_5$ can obtained from $\mathcal{G}_5$ as in {\bf Example B}. Note that the minimal symplectic filling $(2,1,3,2,1)$ can be obtained from $(1,2,2,2,1)$ by a rational blowdown ($\rbd_1$ of {\bf Example B}) along the edge $e_1$, while the minimal symplectic filling $(1,2,3,1,2)$ can be obtained from $(1,2,2,2,1)$ by a rational blowdown ($\rbd_5$ of {\bf Example B}) along the edge $e_3$.

The minimal symplectic filling $(2,2,2,1,4)$ can be obtained from $(1,2,3,1,2)$ by a rational blowdown obtained as a concatenation of the lantern substitutions corresponding to the edges $e_{3,2}$ and $e_{3,2,1}$  in  Figure~\ref{fig: hexagon1}.  Note that we need to insert a cancelling pair of Dehn twists along $\g_2$ for the lantern substitution corresponding to the edge $e_{3,2}$, but the negative Dehn twists cancels out once we apply the lantern substitution corresponding to the edge $e_{3,2,1}$.

 \begin{figure}[ht]  \relabelbox \small {\epsfxsize=3in
\centerline{\epsfbox{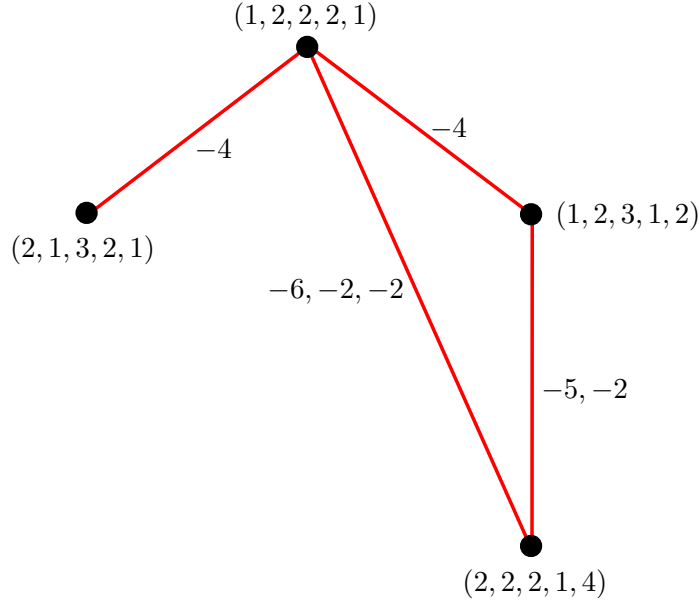}}}
 \relabel{a}{$(1,2,2,2,1)$}  \relabel{b}{$(2,1,3,2,1)$} \relabel{c}{$(1,2,3,1,2)$} \relabel{d}{$(2,2,2,1,4)$} \relabel{1}{$-4$} \relabel{6}{$-5,-2$} \relabel{5}{$-4$} \relabel{4}{$-6,-2,-2$}
\endrelabelbox
\caption{The rational blowdown graph $\mathcal{G}^{37, 10}_5$.}\label{fig: g3710} \end{figure}

Moreover, the minimal symplectic filling $(2,2,2,1,4)$ can be obtained from the minimal resolution $(1,2,2,2,1)$ by a {\em single} rational blowdown which is the concatenation of three lantern substitutions,
$$(1,2,2,2,1) \xrightarrow[\text{lantern}]{\text{$e_3$}}(1,2,\circled{3},\boxed{1},\circled{2}) \xrightarrow[\text{lantern}]{\text{$e_{3,2}$}}  (1, \circled{3},\boxed{2},1,\circled{3}) \xrightarrow[\text{lantern}]{\text{$e_{3,2,1}$}}  (\circled{2},\boxed{2},2,1,\circled{4}). $$  We observe that  in the concatenation of $3$ lanterns we use Dehn twists along the curves $\g_1, \g_2, \g_3, \g_5, \g_5, \g_5, \a_2, \a_3, \a_4, \a_5$ and  Dehn twists along the curves $\g_2, \g_3, \g_4$ emerge as a result of these substitutions. After the cancelations, we only use Dehn twists along the curves $\g_1, \g_5, \g_5, \g_5, \a_2, \a_3, \a_4, \a_5$. This means that, there is a relation in the mapping class group of the $6$-holed sphere, where the product $$D(\g_1) \circ D(\a_2) \circ D(\a_3) \circ D(\a_4) \circ D(\a_5) \circ D^3(\g_5)$$ is isotopic to the product of Dehn twists about $5$ curves, which is precisely the {\em daisy relation} for the $6$-holed sphere. Since the triangulation $(2,2,2,1,4)$ is obtained from $(1,2,2,2,1)$ by flips along the contiguous sequence $d_3, d_2, d_1$ of distinguished diagonals in $(1,2,2,2,1)$, the linear plumbing graph for this rational blowdown  has weights  $-6,-2,-2$, by Proposition~\ref{prop: lpg}.

\subsection{Example D} \label{ex: D} Let $(p,q)=(45,26)$. Then $\frac{p}{p-q}=[3,2,3,2,3]$. The contact $3$-manifold $(L(45, 26), \xi_{can})$ has $4$ distinct minimal symplectic fillings, up to diffeomorphism, which are parametrized by the set $$\mathcal{Z}_{5}(\textstyle{\frac{37}{37-10}}) =  \{(1,2,2,2,1), (2,1,3,2,1), (1,2,3,1,2), (3,1,3,1,3)\}.$$  The graph $\mathcal{G}^{45, 26}_5$ is a subgraph of $\mathcal{G}^{81, 47}_5$ which we depicted in  Figure~\ref{fig: g8147},  and the discussion in {\bf Example B}, applies verbatim here.

\begin{Rem} {\em The interested reader may compare {\bf Examples B, C, D} above, with the examples in \cite[Section 4.1]{cps}, where they use sequences of rational blowdowns and {\em symplectic antiflips} to obtain minimal symplectic fillings from the minimal resolution.}
\end{Rem}

\subsection{Example E} \label{ex: E} Let $(p,q)=(140,41)$. Then $\frac{p}{p-q}=[2,2,4,2,4,2,2]$. The contact $3$-manifold $(L(140, 41), \xi_{can})$ has $8$ distinct minimal symplectic fillings, up to diffeomorphism, which are parametrized by the set $\mathcal{Z}_{7}(\textstyle{\frac{140}{140-41}}) =$ $$ \{(1,2,2,2,2,2,1), (2,1,3,2,2,2,1), (1,2,3,1,3,1,2), (1,2,2,2,3,1,2) $$  $$(2,1,4,1,3,2,1), (2,1,3,2,3,1,2), (1,2,3,1,4,1,2), (2,1,4,1,4,1,2) \}. $$ The graph $\mathcal{G}^{140, 41}_7$ is depicted in  Figure~\ref{fig: g14041}.

 \begin{figure}[ht]  \relabelbox \small {\epsfxsize=5.5in
\centerline{\epsfbox{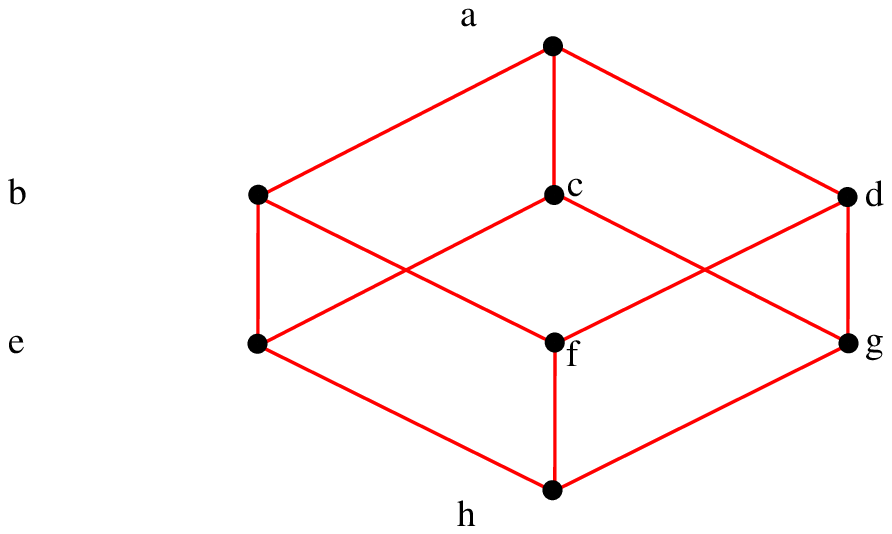}}}
 \relabel{a}{$(1,2,2,2,2,2,1)$}  \relabel{b}{$(2,1,3,2,2,2,1)$} \relabel{c}{$(1,2,3,1,3,1,2)$} \relabel{d}{$(1,2,2,2,3,1,2) $}  \relabel{e}{$(2,1,4,1,3,2,1)$}  \relabel{f}{$(2,1,3,2,3,1,2)$} \relabel{g}{$(1,2,3,1,4,1,2)$} \relabel{h}{$(2,1,4,1,4,1,2) $}
\endrelabelbox
\caption{The rational blowdown graph $\mathcal{G}^{140, 41}_7$.}\label{fig: g14041} \end{figure}

We claim that there are no symplectic rational blowdowns between these $8$ symplectic fillings other than the ones represented  by the edges in Figure~\ref{fig: g14041}. To prove our claim, we first show that there is no height $3$ rational blowdown from the canonical symplectic filling $\textbf{u}_7 =  (1,2,2,2,2,2,1)$. To see this, we first observe that the symplectic filling $\textbf{u}_7$ is diffeomorphic to the linear plumbing with weights $-4,-2,-4, -2, -4$  (since $\frac{p}{q}=[4,2,4,2,4]$)  and hence the lattice $H_2 (\textbf{u}_7, \mathbb{Z})$ is even. Thus, the linear plumbing with weights $-2,-5,-3$ cannot be embedded into $\textbf{u}_7$. Next we rule out any possible embedding of the linear plumbing with weights $-6,-2,-2$ into $\textbf{u}_7$ by showing that the linear plumbing with weights $-2,-2$ cannot be embedded into $\textbf{u}_7$. On way to see the latter is as follows. We represent, as described in \cite[Section 2]{ggp}, the classes in $H_2 (\textbf{u}_7, \mathbb{Z})$ as nullhomologous linear combinations of the vanishing cycles of the planar Lefschetz fibration on canonical symplectic filling $\textbf{u}_7$. Then we use the fact that any class whose square is $-2$ is nullhomologous linear combination of only two vanishing cycles with $\pm 1$ coefficients to derive a contradiction.

It is easy to see that there are no height $2$ rational blowdowns from the canonical symplectic filling $\textbf{u}_7 $ since $H_2 (\textbf{u}_7, \mathbb{Z})$ is even and hence the linear plumbing with weights $-2,-5$ cannot be embedded into $\textbf{u}_7$.

According to the classification of minimal symplectic fillings, there are only $3$ distinct fillings, namely $(2,1,3,2,2,2,1), (1,2,3,1,3,1,2), (1,2,2,2,3,1,2)$, which can be obtained from $\textbf{u}_7$ by a single symplectic rational blowdown. In fact, these correspond to the $3$ distinct $-4$ curves in $\textbf{u}_7$. We now show that none of these $3$ fillings contains an embedded linear plumbing with weights $-2,-5$.
The lattices for the symplectic fillings, $(2,1,3,2,2,2,1), (1,2,3,1,3,1,2), (1,2,2,2,3,1,2)$ are given, respectively,  by
$$\left[
\begin{array}{rrrr}
-7 & 2 & 0 & 0 \\
2 & -4 & 1 & 0 \\
0 & 1 & -2 & 1 \\
0 & 0 & 1 & -4
\end{array}
		\right], \quad
		 \left[
\begin{array}{rrrr}
-4 & 1 & 0 & 0 \\
 1 & -4 & -4 & -1 \\
0 & -4 & -7 & -2 \\
0 & -1 & -2 & -4
\end{array}
		\right], \quad
		 \left[
\begin{array}{rrrr}
-4 & 1& 0 & 0 \\
1 & -2 & 1 & 0 \\
0 & 1 & -4 & 2 \\
0 & 0 & 2 & -7
\end{array}
		\right]. $$ 
One can check that there is no class of square $-5$ in the first and the third lattices, whereas there is no class of square $-2$ in the second lattice.

In particular, we obtain the following result. 

\begin{Prop}\label{prop: depth3} The rational blowdown depth of the minimal symplectic filling $(2,1,4,$ $1,4,1,2)$  of $(L(140, 41), \xi_{can})$ is equal to $\dpt((2,1,4,1,4,1,2)) =3$.
\end{Prop}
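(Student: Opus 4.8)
The plan is to prove the statement in two halves: the inequality $\mathrm{RBD} \leq 3$, which is essentially free, and the inequality $\mathrm{RBD} \geq 3$, which requires ruling out shorter rational blowdown sequences. First, since $\dpt((2,1,4,1,4,1,2)) = 3$ (there are three interior $1$'s), Proposition~\ref{prop: depth} immediately gives that the rational blowdown depth of the corresponding minimal symplectic filling is at most $3$. So it remains to show that this filling cannot be obtained from the canonical symplectic filling $\textbf{u}_7 = (1,2,2,2,2,2,1)$ by a sequence of at most two symplectic rational blowdowns along linear plumbing graphs. Since the second Betti number drops by $\hgt$, and $\hgt((2,1,4,1,4,1,2)) = |\textbf{n}| - 2(k-1) = 16 - 12 = 4$ by Lemma~\ref{lem: height}, any rational blowdown sequence from $\textbf{u}_7$ to this filling has total ``height cost'' $4$.

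The strategy for the lower bound is to analyze what the intermediate filling in a hypothetical length-two sequence would have to be. A single symplectic rational blowdown drops the second Betti number by the height of the linear plumbing graph used (i.e.\ by $r$, if the graph has $r$ vertices). So a length-two sequence $\textbf{u}_7 \to \textbf{n}^{(1)} \to (2,1,4,1,4,1,2)$ decomposes the total drop $4$ as $r_1 + r_2 = 4$ with $r_1, r_2 \geq 1$. By the classification (Lisca's parametrization, restricted to $\mathcal{Z}_7(\tfrac{140}{99})$), the intermediate filling $\textbf{n}^{(1)}$ must be one of the eight fillings on the list, lying strictly between $\textbf{u}_7$ and the target in height. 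The relevant candidates are the fillings of height $1,2,3$. The argument already carried out in the text shows that the only fillings obtainable from $\textbf{u}_7$ by a \emph{single} rational blowdown are the three fillings $(2,1,3,2,2,2,1)$, $(1,2,3,1,3,1,2)$, $(1,2,2,2,3,1,2)$ (the three $-4$ curves in $\textbf{u}_7$), each via a single lantern/height-$1$ blowdown. Thus in a length-two sequence $r_1 = 1$ forces $r_2 = 3$, so $\textbf{n}^{(1)}$ is one of these three height-$1$ fillings and there must be a height-$3$ rational blowdown from $\textbf{n}^{(1)}$ to $(2,1,4,1,4,1,2)$; a height-$3$ rational blowdown is along a linear plumbing graph with three vertices, whose weights (by Proposition~\ref{prop: wahl}) are one of $-2,-5,-3$ / $-3,-5,-2$ / $-6,-2,-2$ / $-2,-2,-6$, requiring an embedding of that plumbing into $\textbf{n}^{(1)}$. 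I would then compute the intersection lattices of $(2,1,3,2,2,2,1)$, $(1,2,3,1,3,1,2)$, $(1,2,2,2,3,1,2)$ — exactly the three $4\times 4$ matrices displayed just before the proposition — and check, using the $c_1$-adjunction constraints as in the proof of Proposition~\ref{prop: depth2a}, that none of them contains an embedded linear chain with the required weights realizing the target intersection form. The text already records that the first and third lattices contain no class of square $-5$, and the second contains no class of square $-2$; this rules out any height-$3$ blowdown from these fillings. Finally, the case $r_1 = r_2 = 2$ would require a height-$2$ rational blowdown out of $\textbf{u}_7$, i.e.\ an embedding of a $-2,-5$ plumbing into $\textbf{u}_7$; but $H_2(\textbf{u}_7,\mathbb{Z})$ is even (since $\tfrac{p}{q} = [4,2,4,2,4]$ gives the linear plumbing $-4,-2,-4,-2,-4$), so no class of odd square $-5$ exists. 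This exhausts all ways of writing $4 = r_1 + r_2$, so no length-two sequence exists, and combined with the upper bound the rational blowdown depth is exactly $3$.

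The main obstacle I expect is the bookkeeping in the intermediate-filling analysis: one must be careful that ``rational blowdown'' here always means a single surgery along \emph{one} connected linear plumbing graph (not a disjoint union), so that the height-drop of one step is genuinely the number of vertices of one Wahl chain, and correspondingly the lattice-embedding obstruction applies to a single chain of the prescribed form. The linear-algebra checks on the three displayed $4\times 4$ lattices are routine but must be done honestly: one enumerates classes of square $-5$ (resp.\ $-2$) subject to the adjunction/$c_1$ constraints and verifies the list is empty or incompatible with completing to the full chain with the correct target form. The evenness argument for $\textbf{u}_7$ and the vanishing-cycle argument (à la \cite{ggp}) ruling out a $-2,-2$ sub-plumbing are already supplied in the surrounding text and can be invoked directly; I would simply assemble these pieces into the case analysis above.
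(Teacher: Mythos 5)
Your overall strategy coincides with the paper's: the upper bound comes from Proposition~\ref{prop: depth}, and the lower bound is obtained by ruling out sequences of at most two rational blowdowns using exactly the ingredients assembled in Example~E (evenness of $H_2(\textbf{u}_7,\mathbb{Z})$, the three displayed $4\times 4$ intersection lattices with the $c_1$/adjunction constraints, and Proposition~\ref{prop: wahl} for the admissible chain weights). However, there is a concrete arithmetic slip that distorts your case analysis: $|\textbf{n}| = 2+1+4+1+4+1+2 = 15$, so by Lemma~\ref{lem: height} one has $\hgt((2,1,4,1,4,1,2)) = 15-12 = 3$, not $4$. The total drop in $b_2$ from $\textbf{u}_7$ to the target is therefore $3$, so a hypothetical two-step sequence splits as $1+2$ or $2+1$ (and a one-step sequence would be a single height-$3$ blowdown), not as $1+3$, $2+2$, $3+1$.

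The slip is not harmless as written. In your ``$r_1=1$, $r_2=3$'' case you assert that the recorded facts (no class of square $-5$ in the first and third lattices, no class of square $-2$ in the second) rule out \emph{any} height-$3$ blowdown from the three height-$1$ fillings; this is unjustified for the chains $-6,-2,-2$ and $-2,-2,-6$, which contain no $-5$-sphere, so the absence of square-$-5$ classes in the first and third lattices does not exclude them (both of those lattices do contain square-$-2$ classes). With the correct height the problem disappears: the second step must be along a two-vertex chain $-2,-5$ or $-5,-2$, and the recorded lattice facts then suffice --- which is precisely the paper's argument. Of the remaining cases, $2+1$ is excluded because $H_2(\textbf{u}_7,\mathbb{Z})$ is even, and a single height-$3$ blowdown from $\textbf{u}_7$ is excluded by evenness (for $-2,-5,-3$ and $-3,-5,-2$) together with the vanishing-cycle argument showing that no $-2,-2$ chain embeds in $\textbf{u}_7$ (for $-6,-2,-2$ and $-2,-2,-6$). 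Note also that your appeal to ``only three fillings are reachable from $\textbf{u}_7$ in one step'' is itself established by these same obstructions combined with Lisca's classification, so those obstructions must remain part of the argument rather than be treated as an independent input; once you correct the height to $3$ and restructure the cases accordingly, your proposal becomes the paper's proof.
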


\end{document}